\numberwithin{equation}{section}
\theoremstyle{definition}
\newtheorem{dfn}{Definition}[section]
\theoremstyle{plain}
\newtheorem{thm}[dfn]{Theorem}
\newtheorem{lem}[dfn]{Lemma}
\newtheorem{cor}[dfn]{Corollary}
\newtheorem{prop}[dfn]{Proposition}
\theoremstyle{remark}
\newtheorem{rmk}[dfn]{Remark}
\renewcommand{\emptyset}{\varnothing}
\newcommand{\Z}{\mathbb{Z}}
\newcommand{\N}{\mathbb{N}}
\newcommand{\Npos}{\mathbb{N}}
\newcommand{\R}{\mathbb{R}}
\newcommand{\Rpos}{\mathbb{R}_{>0}}
\newcommand{\Rnonneg}{\mathbb{R}_{\ge0}}
\newcommand{\C}{\mathbb{C}}
\renewcommand{\Re}{\operatorname{Re}}
\renewcommand{\Im}{\operatorname{Im}}
\newcommand{\at}[1]{\rvert_{#1}}
\newcommand{\inv}{^{-1}}
\newcommand{\dum}{\boldsymbol{\cdot}}
\newcommand{\card}[1]{\##1}
\newcommand{\disc}{D}
\newcommand{\sphere}[1][2]{S^{#1}}
\newcommand{\half}{\frac{1}{2}}
\newcommand{\thalf}{\tfrac{1}{2}}
\newcommand{\sign}{\operatorname{sign}}
\newcommand{\chf}[1]{\mathbbm{1}_{#1}}
\newcommand{\tendsto}{\to}
\newcommand{\dist}{\operatorname{dist}}
\newcommand{\supp}{\operatorname{supp}}
\newcommand{\eps}{\varepsilon}
\newcommand{\cl}[1]{\overline{#1}}
\newcommand{\interior}[1]{\operatorname{int}(#1)}
\newcommand{\bd}{\partial}
\newcommand{\vb}[1]{\mathbf{#1}}
\newcommand{\pd}[1]{\partial_{#1}}
\newcommand{\pdn}{\partial_{\vb{n}}}
\newcommand{\sequence}[2]{\{#1\}_{#2}}
\newcommand{\bigO}[1][]{O_{#1}}
\newcommand{\littleo}[1][]{o_{#1}}
\newcommand{\idot}[3][]{\langle#2,#3\rangle_{#1}}
\newcommand{\abs}[1]{\lvert#1\rvert}
\newcommand{\bigabs}[1]{\left\lvert#1\right\rvert}
\newcommand{\norm}[2][]{\lVert#2\rVert_{#1}}
\newcommand{\dual}[1]{#1^*}
\newcommand{\domain}[1]{\operatorname{Dom}(#1)}
\newcommand{\W}[2]{\mathrm{W}^{#1,#2}}
\newcommand{\Cn}[1]{\mathrm{C}^{#1}}
\newcommand{\Lp}[1]{\mathrm{L}^{#1}}
\newcommand{\Wprod}{\mathcal{W}}
\newcommand{\Youngf}{\Phi}
\newcommand{\LlogL}[2]{\Lp{#1}(\log\Lp{})^{#2}}
\newcommand{\LlogLone}{\mathrm{L}\log\mathrm{L}}
\newcommand{\expL}[1]{\exp\Lp{#1}}
\newcommand{\expLone}{\exp\mathrm{L}}
\newcommand{\Mfd}{X}
\newcommand{\mfd}{M}
\newcommand{\submfd}{\Upsilon}
\newcommand{\mfdim}{d}
\newcommand{\pullb}[1]{#1^*}
\newcommand{\dee}{\mathrm{d}}
\newcommand{\gee}{g}
\newcommand{\Tp}[2]{T_{#1}{#2}}
\newcommand{\Nb}[1]{N{#1}}
\newcommand{\Np}[2]{N_{#1}{#2}}
\newcommand{\Nchart}{\varphi}
\newcommand{\Nbnhd}{N^\eps}
\newcommand{\Npnhd}{N^\eps_x}
\newcommand{\ball}[3][\gee]{\mathcal{B}_{#1}(#2,#3)}
\newcommand{\ann}[4][\gee]{\mathcal{A}_{#1}(#2,#3,#4)}
\newcommand{\divg}[1][\gee]{\mathrm{div}}
\newcommand{\grad}[1][\gee]{\nabla_{#1}}
\newcommand{\lapl}[1][\gee]{\Delta_{#1}}
\newcommand{\Vol}[1][\mfdim]{\mathrm{Vol}_{#1}}
\newcommand{\Area}{\mathrm{Vol}_{\mfdim-1}}
\newcommand{\volume}[1][\gee]{\dee{v_{#1}}}
\newcommand{\area}[1][\gee]{\dee{A_{#1}}}
\newcommand{\dv}[1][\gee]{\,\dee{v_{#1}}}
\newcommand{\dA}[1][\gee]{\,\dee{A_{#1}}}
\newcommand{\Eball}[1]{B_{#1}}
\newcommand{\Eann}[2]{A_{#1,#2}}
\newcommand{\Rob}{R}
\newcommand{\Sch}{S}
\newcommand{\Rch}{{}}
\newcommand{\Dir}{D}
\newcommand{\lam}[2][k]{\lambda_{#1}^{#2}}
\newcommand{\laam}[1][k]{\bar{\lambda}_{#1}}
\newcommand{\Rayleigh}[2][\gee]{\mathcal{R}^{#2}_{#1}}
\newcommand{\site}{s}
\newcommand{\sites}{S^\eps}
\newcommand{\bsites}{S_\bd^\eps}
\newcommand{\psites}{S_\odot^\eps}
\newcommand{\osites}{S_\ocircle^\eps}
\newcommand{\bdom}{\dome_\bd}
\newcommand{\pdom}{\dome_\odot}
\newcommand{\odom}{\dome_\ocircle}
\newcommand{\zbd}{\Gamma}
\newcommand{\bdmfd}{{\bd\mfd\setminus\zbd}}
\newcommand{\Wz}[2]{W_{0,\zbd}^{#1,#2}}
\newcommand{\radius}{r_{\eps,\bdconst}}
\newcommand{\Radius}{R_\eps}
\newcommand{\rrate}{r}
\newcommand{\Rrate}{R}
\newcommand{\cell}{Y^\eps_\site}
\newcommand{\hole}{\ball{\site}{\radius}}
\newcommand{\holes}[1][\eps,\bdconst]{T^{#1}}
\newcommand{\pcell}{V^{\eps,\bdconst}_\site}
\newcommand{\chart}{\varphi_\site}
\newcommand{\scale}{\kappa}
\newcommand{\atrace}{\gamma_{\scale,p}}
\newcommand{\trace}{\gamma^{\eps,\bdconst}_{\scale,p}} 
\newcommand{\dom}{\Omega}
\newcommand{\dome}[1][\eps,\bdconst]{\Omega^{#1}}
\newcommand{\Vee}{V}
\newcommand{\Veee}[1][\eps]{V^{#1}}
\newcommand{\nue}[1][\eps]{\nu^{#1}}
\newcommand{\dnue}[1][\eps]{\,\dee\nu^{#1}}
\newcommand{\bdconst}{\alpha}
\newcommand{\bdparam}{\sigma}
\newcommand{\Bdparam}{\Sigma}
\newcommand{\bdparame}[1][\eps,\bdconst]{{\sigma^{#1}}}
\newcommand{\gext}[1][\eps,\bdconst]{J^{#1}}
\newcommand{\yu}{u}
\newcommand{\yuk}[2][k]{\yu^{#2}_{#1}}
\newcommand{\yue}[1][\eps,\bdconst]{u^{#1}}
\newcommand{\yuek}[1][\eps,\bdconst]{\yue[#1]_k}
\newcommand{\Yu}{U}
\newcommand{\Yue}[1][\eps,\bdconst]{U^{#1}}
\newcommand{\Yuek}[1][\eps,\bdconst]{\Yue[#1]_k}
\newcommand{\Yuu}{\bar{U}}
\newcommand{\Yuuk}[1][k]{\Yuu_{#1}}
\newcommand{\yuu}{u}
\newcommand{\Kay}[1][p]{K_{#1}}
\newcommand{\Psii}{\Psi^\eps_\site}
\newcommand{\mue}[1][\eps,\bdconst]{\mu^{#1}}
\newcommand{\mues}{\mu^{\eps,\bdconst}_\scale}
\newcommand{\Xies}{\overline{\xi}^{\eps}_\scale}
\newcommand{\Mues}{\overline{\mu}^{\eps,\bdconst}_\scale}
\newcommand{\schf}{\mathfrak{h}}
\newcommand{\schop}{H}
\newcommand{\resolvent}[2][\lambda]{R(#1;#2)}
\newcommand{\rate}{\beta}
\newcommand{\Mcap}{\mathcal{M}_{\mathrm{cap}}}
\newcommand{\wse}{w^\eps_\site}
\newcommand{\we}{w^\eps}
\newcommand{\hyp}[1]{(\textbf{#1})}
\newcommand{\hypone}{\hyp{$\text{H}\boldsymbol{\dom_1}$}}
\newcommand{\hyptwo}{\hyp{$\text{H}\boldsymbol{\dom_2}$}}
\newcommand{\hypthree}{\hyp{HV}}
\title[Homogenisation for the Robin eigenvalue problem on manifolds]
      {Homogenisation for the Robin eigenvalue problem on manifolds
        and flexibility of optimal Schr\"odinger potentials}
\author{Lo Chia-Chun}
\date{\today}
\begin{document}
\begin{abstract}
  We show that the spectrum of a Schr\"odinger eigenvalue problem posed
  on a closed Riemannian manifold $M$ with non-negative potential can be
  approached by that of Robin eigenvalue problems with constant positive
  boundary parameter posed on a sequence of domains in $\mfd$. We
  construct these Robin problems by means of a homogenisation procedure.
  We show a similar result for compact manifolds with non-empty
  boundary and sign-indefinite potential; in this case the Robin
  boundary parameter can be taken to be constant on each boundary
  component and to have constant magnitude.

  As an application, we prove a flexibility result for optimal
  Schr\"odinger potentials: for certain problems where it is known
  that there exists some potential $V$ which extremises some
  Schr\"odinger eigenvalue, we show that this extremal
  eigenvalue is also approached by the corresponding eigenvalues for a
  sequence of smooth potentials which remain bounded away from~$V$ in
  some dual Sobolev space.
\end{abstract}
\maketitle
\tableofcontents
\clearpage
\section{Introduction and main results}
\label{sec:introduction}
Let $(\Mfd,\gee)$ be a $\mfdim$-dimensional complete smooth Riemannian
manifold, and let $\mfd\subseteq\Mfd$ be a bounded domain in $\Mfd$
with possibly empty Lipschitz boundary. We call such a domain a
\emph{compact Riemannian manifold with Lipschitz boundary}, noting in
particular that any $\mfd$ which is a compact smooth Riemannian
manifold with boundary in the usual sense can be realised also as an
example of such a domain. The \emph{Robin eigenvalue problem}
on~$\mfd$ with boundary parameter $\bdparam:\bd\mfd\to\R$ consists in
finding all non-trivial $(\lam[]{\Rob}, \yu)$ satisfying
\begin{equation}
  \label{eqn:robin}
  \begin{cases}
    -\lapl\yu=\lam[]{\Rob}\yu&\text{on $\mfd$}\\
    \pdn\yu+\bdparam\yu=0&\text{on $\bd\mfd$}.
  \end{cases}
\end{equation}
Here $-\lapl$ is the Laplace--Beltrami operator with respect to the
metric $\gee$, and $\pdn$ denotes the outward normal derivative at the
boundary.
For all the examples we construct in the present work, we shall have
that $\bdparam\in\Lp\infty(\bd\mfd)$, which is sufficient for the
Robin spectrum to consist only of discrete eigenvalues which form a
non-decreasing sequence accumulating only at $+\infty$:
\begin{equation}
  \label{eqn:robin-eigenvalues}
  \lam[1]{\Rob}(\mfd,\bdparam)\le
  \lam[2]{\Rob}(\mfd,\bdparam)\le
  \dots
  \tendsto\infty.
\end{equation}

The \emph{Schr\"odinger eigenvalue problem} on $\mfd$ with potential
$\Vee:\mfd\to\R$ consists in finding $\lam[]{\Sch}$ such that the
equation
\begin{equation}
  \label{eqn:schroedinger-1}
  -\lapl\yu+\Vee\yu=\lam[]{\Sch}\yu
\end{equation}
on $\mfd$ admits a non-trivial solution for $\yu$. For self-adjoint
boundary conditions and under appropriate assumptions on $\Vee$, the
Schr\"odinger spectrum consists also of discrete eigenvalues:
\begin{equation}
  \label{eqn:schroedinger-eigenvalues}
  \lam[1]{\Sch}(\mfd,\Vee)\le
  \lam[2]{\Sch}(\mfd,\Vee)\le
  \dots
  \tendsto\infty.
\end{equation}
In the present work, we consider mixed Dirichlet/Robin conditions
on~$\bd\mfd$.
\subsection{Homogenisation for the Robin Problem}
The main result presented in this work is that, subject to some
integrability condition on the potential, the Schr\"odinger spectrum
on $\mfd$ a Riemannian manifold with Lipschitz boundary can be
approached by the Robin spectra of a family of domains
$\sequence{\dome[\eps]}{\eps>0}$ as $\eps$ tends to $0$, in which
$\dome[\eps]\subseteq\mfd$ for each~$\eps$. The integrability
condition that we shall impose on the potential $\Vee$ depends on the
dimension of $\mfd$. We employ the following terminology to avoid
repetition:
\begin{dfn}[Admissible potentials]
  \label{dfn:admissible}
  For $\mfdim\ge2$, a potential $\Vee$ on a $\mfdim$-dimensional
  Riemannian manifold $\mfd$ with Lipschitz boundary is said to be
  \emph{admissible} if $\Vee\in\Lp{\frac{\mfdim}{2}}(\mfd)$ when
  $\mfdim\ge 3$, or if $\Vee\in\LlogLone(\mfd)$ when $\mfdim=2$.
\end{dfn}
\begin{rmk}
  In practice, the results in this work are first proven for
  $\Vee\in\Cn1(\mfd)$; the statements in full generality are then
  obtained via a continuity result for Schr\"odinger eigenvalues.
\end{rmk}
\begin{rmk}
  Admissibility of $\Vee$ implies that for every $u,v\in\W12(\mfd)$,
  the expression $\int_\mfd\Vee uv\dv$, which appears in the weak
  formulation~\eqref{eqn:weak-schroedinger} of the Schr\"odinger
  problem, is finite. This in turn is sufficient to ensure that the
  Schr\"odinger spectrum consists of discrete eigenvalues. When
  $\mfdim=2$, that we have chosen to define admissible potentials to
  be those potentials in $\LlogLone(\mfd)$, rather than in
  $\Lp{\frac{\mfdim}{2}}$, is a consequence of dimension $2$ being a
  critical case of the relevant Sobolev embedding.
\end{rmk}
Definitions related to the Orlicz--Sobolev space $\LlogLone(\mfd)$ are
reviewed in Section~\ref{sec:notation}.

Let us first state the result in the case where $\mfd$ is a closed
Riemannian manifold, and where the potential $\Vee$ is
non-negative. In this case, it is possible moreover to construct the
Robin problems on $\dome[\eps]$ so that the boundary parameter
$\bdparame[\eps]$ is taken to be a constant.
\begin{thm}[Schr\"odinger as homogenised Robin; on a closed Riemannian manifold]
  \label{thm:main-nonneg}
  Fix a constant $\bdconst>0$.  Let $\mfd$ be a closed Riemannian
  manifold of dimension $\mfdim\ge2$. Let the potential $\Vee$ on
  $\mfd$ be admissible and non-negative.
  Then there exists a family $\sequence{\dome[\eps]}{\eps>0}$ of
  domains $\dome[\eps]\subseteq\mfd$ such that for all $k\in\Npos$ we
  have
  \begin{equation}
    \label{eqn:main-nonneg}
    \lam{\Rob}(\dome[\eps],\bdconst)\tendsto\lam{\Sch}(\mfd,\Vee)
  \end{equation}
  as $\eps\tendsto0$.  
\end{thm}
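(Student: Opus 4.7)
The plan is a Cioranescu--Murat style perforated-domain construction, adapted to Robin rather than Dirichlet conditions and to the Riemannian setting: I will drill a quasi-periodic family of small geodesic balls out of $\mfd$, with radii locally tuned to the value of $\Vee$, and arrange the Robin contribution at parameter $\bdconst$ so that it realises the potential $\Vee$ as a ``strange term'' in the $\eps\tendsto0$ limit. By the continuity statement alluded to in the remark following Definition~\ref{dfn:admissible} it suffices to treat $\Vee\in\Cn1(\mfd)$ with $\Vee\ge0$, so that $\Vee(\site)$ has a pointwise meaning at each hole centre.

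First I would fix a quasi-uniform family $\sites\subset\mfd$ of $\eps$-separated points and a partition of $\mfd$ into associated Voronoi-type cells $Y^\eps_\site$ of diameter $\bigO(\eps)$ and volume $\bigO(\eps^\mfdim)$. In each cell I remove the closed geodesic ball $\ball{\site}{r_\eps(\site)}$ of radius
\[
r_\eps(\site)=\left(\frac{\Vee(\site)}{\bdconst\,\omega_{\mfdim-1}}\right)^{1/(\mfdim-1)}\eps^{\mfdim/(\mfdim-1)},
\]
dictated by the cellwise balance $\bdconst\,\abs{\bd\ball{\site}{r_\eps(\site)}}\approx\Vee(\site)\,\Vol(Y^\eps_\site)$, which pairs the Robin boundary contribution with the potential at first order. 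Since $\mfdim/(\mfdim-1)>1$ the holes are $\littleo(\eps)$, hence pairwise disjoint for small $\eps$, and $\dome[\eps]$ is $\mfd$ with this union of holes removed.

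Next I would set up the variational framework. The min-max principle expresses $\lam{\Rob}(\dome[\eps],\bdconst)$ in terms of the Rayleigh quotient with numerator $\int_{\dome[\eps]}\abs{\grad u}^2\dv+\bdconst\int_{\bd\dome[\eps]}u^2\dA$, and $\lam{\Sch}(\mfd,\Vee)$ in terms of $\int_\mfd\abs{\grad u}^2\dv+\int_\mfd\Vee u^2\dv$. Because each hole has radius $\littleo(\eps)$, restriction $\W12(\mfd)\tendsto\W12(\dome[\eps])$ is bounded and admits a harmonic-extension right inverse whose operator norm is $1+\littleo(1)$, so convergence of the $k$-th eigenvalues reduces to a Mosco-type convergence of quadratic forms: a $\liminf$ inequality along sequences weakly convergent in $\W12(\mfd)$, and a recovery sequence for each $u\in\W12(\mfd)$.

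The decisive step is to prove that
\[
\bdconst\int_{\bd\dome[\eps]}u^2\dA\tendsto\int_\mfd\Vee u^2\dv
\]
for a dense family of test functions. Splitting $u=\bar u_\site+(u-\bar u_\site)$ cellwise, the leading term yields a Riemann sum converging to $\int_\mfd\Vee u^2\dv$ by the choice of $r_\eps(\site)$ and continuity of $\Vee$, while the remainder is controlled by combining the cellwise Poincaré inequality with a trace inequality on the punctured cell $Y^\eps_\site\setminus\ball{\site}{r_\eps(\site)}$; the scaling relation $\bdconst\,r_\eps^{\mfdim-1}\eps^{-\mfdim}=\bigO(1)$ ensures the accumulated error is $\littleo(1)$. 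The main obstacle lies in dimension $\mfdim=2$: there $r_\eps\sim\eps^2$ makes the relevant trace estimate critical, and the Riemann sum argument must be carried out against an Orlicz norm rather than an $\Lp{\mfdim/2}$ norm, which is precisely why Definition~\ref{dfn:admissible} is formulated in terms of $\LlogLone$ in the two-dimensional case.
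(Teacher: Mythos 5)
Your construction is essentially the paper's: a maximal $\eps$-separated net, the associated Voronoi cells, geodesic holes with radius tuned so that $\bdconst\,\Area(\bd\hole)=\abs{\Vee(\site)}\Vol(\cell)$ (the paper's~\eqref{eqn:radius-dfn}), reduction to $\Vee\in\Cn1(\mfd)$, and a two-sided variational argument. Your cellwise split $u=\bar u_\site+(u-\bar u_\site)$ is a reasonable alternative to the paper's auxiliary potential $\Psii$ introduced via Lemma~\ref{lem:psi}; both end up needing the same Poincar\'e estimate (Prop.~\ref{prop:p-poincare-constant}) and the trace bound on the punctured cell (Prop.~\ref{prop:trace-with-scaling}).

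Where the proposal falls short is in the liminf half of the Mosco scheme you invoke. The ``decisive step'' you state --- $\bdconst\int_{\bd\dome[\eps]}u^2\dA\to\int_\mfd\Vee u^2\dv$ for a dense family of test functions --- suffices only for the recovery/upper-bound direction, i.e.\ $\limsup\lam{\Rob}\le\lam{\Sch}$. For the lower bound one must pass to the limit in $\int_{\bd\dome[\eps]}(\Yuek[\eps])^2\bdconst\dA$ along the weakly $\W12(\mfd)$-convergent extended eigenfunctions $\Yuek[\eps]=\gext[\eps]\yuek[\eps]$, and weak-$*$ convergence of the boundary measures against fixed test functions does not cover this. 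The paper's resolution is the pair: the Sobolev multiplication theorem (Prop.~\ref{prop:sobolev-multiplication-i}, or Prop.~\ref{prop:sobolev-multiplication-ii} when $\mfdim=2$) putting $(\Yuek[\eps])^2$ in a bounded set of $\W1{\frac{\mfdim}{\mfdim-1}}(\mfd)$, respectively $\W1{2,-\half}(\mfd)$; and the measure convergence $\mue\to\Vee\volume$ in the \emph{dual} of that space (Prop.~\ref{prop:measures-converge}), which requires the scaled trace estimate with the amplification parameter $\scale$ chosen to balance the error over $\asymp\eps^{-\mfdim}$ cells. None of this appears in your sketch, and without it the lower semicontinuity of eigenvalues is genuinely open.

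Your diagnosis of the dimension-two issue is also slightly off. The trace estimate Lemma~\ref{lem:euc-trace} requires $1<p<\mfdim$, and in $\mfdim=2$ one simply works with $p<2$ --- that is never critical. What is critical in $\mfdim=2$ is that $\tfrac{\mfdim}{\mfdim-1}=2$ saturates the Sobolev multiplication, so that the target of $\W12\times\W12$ must be the Orlicz--Sobolev space $\W1{2,-\half}$ and its dual $\expLone\cong\dual{\LlogLone}$ replace $\W1{\mfdim/(\mfdim-1)}$ and $\Lp{\mfdim/2}$. That is also where the $\LlogLone$ admissibility enters, namely in the norm-resolvent continuity argument of Prop.~\ref{prop:schroedinger-approx}, not in the Riemann-sum step.
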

Each domain $\dome[\eps]$ is constructed by removing from $\mfd$ a
subset $\holes[\eps]$ which is the disjoint union of many small
equidistributed metric balls, with the holes made smaller and more
numerous as $\eps$ tends to zero. This construction of such
\emph{perforated domains} is an example of a process known as
\emph{homogenisation}; some context for this subject is given in
Section~\ref{ssec:background}.

More generally, we show that a similar approximation is possible in
the setting where $\mfd$ has nonempty boundary, and where the
potential for the Schr\"odinger problem may change sign.

When $\bd\mfd$ is nonempty, is necessary to supply boundary
conditions. For this work we choose to impose mixed boundary
conditions where we require solutions $\yu$ to have zero trace on some
subset of the boundary, and satisfy Robin boundary conditions
everywhere else. That is, for some $\zbd\subseteq\bd\mfd$, we require
\begin{equation}
  \label{eqn:schroedinger-2}
  \begin{cases}
    \yu=0&\text{on $\zbd$}\\
    \pdn\yu+\Bdparam\yu=0&\text{on $\mfd\setminus\zbd$.}
  \end{cases}
\end{equation}
with $\Bdparam\in\Lp\infty(\bd\mfd\setminus\zbd)$. Note that we
recover as particular cases Dirichlet boundary conditions on $\bd\mfd$
by choosing $\zbd=\bd\mfd$, Robin boundary conditions by choosing
$\zbd=\emptyset$, and Neumann boundary conditions by choosing
$\zbd=\emptyset$ with $\Bdparam$ identically zero.

Let $\Wz12(\mfd)$ be the closure in $\W12(\mfd)$ of those functions on
$\mfd$ smooth up to the boundary vanishing on $\zbd$. The weak
formulation of the Schr\"odinger problem on $\mfd$ with admissible
potential $\Vee$ and boundary conditions~\eqref{eqn:schroedinger-2}
consists in finding non-trivial $\yu\in\Wz12(\mfd)$ and
$\lam[]{\Sch}\in\R$ such that, for all $v\in\Wz12(\mfd)$,
\begin{equation}
  \label{eqn:weak-schroedinger}
  \int_{\mfd}\grad \yu\cdot\grad v\dv
  +
  \int_{\mfd}\yu v\Vee\dv
  +
  \int_{\bdmfd}\yu v\Bdparam\dA
  =
  \lam[]{\Sch}
  \int_{\mfd}\yu v\dv.
\end{equation}
Since the inclusion $\Wz12(\mfd)\subseteq\W12(\mfd)$ is continuous and
$\W12(\mfd)$ in turn embeds compactly into $\Lp2(\mfd)$, it follows
from the spectral theorem for self-adjoint operators with compact
resolvent that we have discrete eigenvalues
\begin{equation}
  \lam[1]{\Sch}(\mfd,\Vee,\Bdparam)
  \le
  \lam[2]{\Sch}(\mfd,\Vee,\Bdparam)
  \le
  \dots\tendsto\infty.
\end{equation}
In this setting, we again construct perforated domains
$\dome[\eps]\subseteq\mfd$. When forming the set $\holes[\eps]$, we
now take care to place the holes such that their closures fall
entirely within the interior of $\mfd$, so that in particular we have
that $\bd\dome[\eps]$ is the disjoint union
$\bd\mfd\cup\bd\holes[\eps]$. We then consider an eigenvalue problem
on $\dome[\eps]$ for the Laplacian with mixed boundary conditions: on
$\bd\mfd$, we impose the same boundary
conditions~\eqref{eqn:schroedinger-2} as we have specified for the
Schr\"odinger problem; on $\bd\holes[\eps]$, we impose Robin boundary
conditions with parameter
$\bdparame[\eps]\in\Lp\infty(\bd\holes[\eps])$ which is to be
constructed. That is, we have the following eigenvalue problem, where
we seek non-trivial $(\lam[]{\Rob},\yu)$ satisfying
\begin{equation}
  \label{eqn:robin-2}
  \begin{cases}
    -\lapl\yu=\lam[]{\Rob}\yu&\text{on $\dome[\eps]$}\\
    \yu=0&\text{on $\zbd$}\\
    \pdn\yu+\Bdparam\yu=0&\text{on $\bdmfd$}\\
    \pdn\yu+\bdparame[\eps]\yu=0&\text{on $\bd\holes[\eps]$.}\\
  \end{cases}
\end{equation}
The weak formulation for this problem consists in finding non-trivial
$\lam[]{\Rob}\in\R$ and $\yu\in\Wz12(\dome[\eps])$ such that, for any
$v\in\Wz12(\dome[\eps])$,
\begin{equation}
  \label{eqn:weak-robin}
  \int_{\dome[\eps]}\grad\yu\cdot\grad v\dv
  +
  \int_{\bd\holes[\eps]}\yu v\bdparame[\eps]\dA
  +
  \int_{\bdmfd}\yu v\Bdparam\dA
  =
  \lam[]{\Rob}
  \int_{\dome[\eps]}\yu v\dv,
\end{equation}
where $\Wz12(\dome[\eps])$ is defined analogously to
$\Wz12(\mfd)$. Under the stated assumptions on $\bdparame[\eps]$ and
$\Bdparam$, the spectrum of~\eqref{eqn:robin-2} again consists of
discrete eigenvalues.

Let us nevertheless continue to refer to the eigenvalues of
\eqref{eqn:robin-2} as \emph{Robin eigenvalues}, even though only when
$\zbd$ is empty do we actually have Robin boundary conditions on the
entirety of $\bd\dome[\eps]$. We denote by
$\lam{\Rob}({\dome[\eps]},\bdparame[\eps],\Bdparam)$ the $k$-th Robin
eigenvalue.

We are now prepared to state the main result in the setting of a
Riemannian manifold with nonempty Lipschitz boundary, which is that
the Schr\"odinger eigenvalues $\lam{\Sch}(\mfd,\Vee,\Bdparam)$ are
approached by the corresponding Robin eigenvalues
$\lam{\Rob}(\dome[\eps],\bdparame[\eps],\Bdparam)$. Moreover,
for any fixed constant $\bdconst>0$, we show that we can choose
$\bdparame[\eps]$ to be constant on each connected component of
$\bd\holes[\eps]$. Recall again from our earlier description that each
of these components is the boundary of a small metric ball we have
removed from $\mfd$; on each of these components, $\bdparame[\eps]$
takes value either $\bdconst$ or~$-\bdconst$ depending on the sign of
$\Vee$ nearby, in a manner that is made precise in
Section~\ref{ssec:homogenisation-construction}.
\begin{thm}[Schr\"odinger as homogenised Robin; on a Riemannian manifold with Lipschitz boundary]
  \label{thm:main}
  Fix a constant $\bdconst>0$. Let $\mfd$ be a Riemannian manifold
  with Lipschitz boundary, and let $\Vee$ be an admissible potential
  on $\mfd$. Let $\zbd\subseteq\bd\mfd$, and let
  $\Bdparam\in\Lp\infty(\bdmfd)$.
  
  Then there exists a family of domains
  $\sequence{\dome[\eps]}{\eps>0}$ given by
  $\dome[\eps]=\mfd\setminus\holes[\eps]$ where $\holes[\eps]$ is
  contained compactly within~$\mfd$, and there exists a locally
  constant
  ${\bdparame[\eps]:\bd\holes[\eps]\to\{-\bdconst,\bdconst\}}$ such
  that for each $k\in\Npos$ we have
  \begin{equation}
    \lam{\Rob}({\dome[\eps]},\bdparame[\eps],\Bdparam)\tendsto\lam{\Sch}(\mfd,\Vee,\Bdparam)
  \end{equation}
  as $\eps\tendsto 0$.
\end{thm}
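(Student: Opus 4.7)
The plan is to establish Mosco convergence of the quadratic forms associated to~\eqref{eqn:weak-robin} and~\eqref{eqn:weak-schroedinger}, and then to read off convergence of the $k$th eigenvalue for each fixed $k$ from the min-max characterisation, exploiting the compact embedding $\Wz12(\mfd)\hookrightarrow\Lp2(\mfd)$. By the continuity remark earlier in the section, it suffices to treat $\Vee\in\Cn1(\mfd)$; a diagonal argument based on continuity of Schr\"odinger eigenvalues in $\Lp{\mfdim/2}(\mfd)$, respectively in $\LlogLone(\mfd)$ when $\mfdim=2$, then yields the general admissible case. For the construction, I would pull back a Euclidean $\eps$-lattice through a finite chart cover of $\mfd$ to obtain a finite set $\sites$ of sites $\site$ with pairwise disjoint cells $\cell$ of volume $\asymp\eps^\mfdim$ kept at distance $\gtrsim\eps$ from $\bd\mfd$. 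In each cell I remove a single metric ball $\hole=\ball{\site}{\radius}$ whose radius is chosen so that
\begin{equation}
  \bdconst\cdot\Area(\bd\hole)=\abs{\Vee(\site)}\cdot\Vol(\cell),
\end{equation}
so that $\radius\asymp(\eps^\mfdim\abs{\Vee(\site)}/\bdconst)^{1/(\mfdim-1)}\ll\eps$, and set $\bdparame[\eps]\equiv\bdconst\sign\Vee(\site)$ on $\bd\hole$. Then $\holes[\eps]$ is the disjoint union of these balls, $\dome[\eps]=\mfd\setminus\holes[\eps]$, and the conditions on $\bd\mfd$ are inherited unchanged.

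The central analytic step is to show that
\begin{equation}
  \int_{\bd\holes[\eps]}(\yue)^2\bdparame[\eps]\dA\tendsto\int_\mfd\yu^2\Vee\dv
\end{equation}
for suitable sequences $\yue\tendsto\yu$, after which Mosco convergence of the full bilinear forms follows from the standard behaviour of the gradient and of the fixed $\Bdparam$-term. For the $\limsup$ direction I would, given smooth $\yu\in\Wz12(\mfd)$, take $\yue=\yu\rvert_{\dome[\eps]}$ and expand the boundary integral as the Riemann sum $\sum_\site\bdparame[\eps](\site)\Area(\bd\hole)\yu(\site)^2$, which by the radius choice equals $\sum_\site\Vee(\site)\Vol(\cell)\yu(\site)^2$ and converges to $\int_\mfd\yu^2\Vee\dv$; density of smooth functions in $\Wz12(\mfd)$ then dispatches the general case. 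For the $\liminf$ direction, a scaled trace inequality on each punctured cell allows me to replace $(\yue)^2$ on $\bd\hole$ by its cell average up to a vanishing Dirichlet remainder, after which weak $\Lp1$ convergence of the cell averages yields the desired inequality.

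The main obstacle is the sign-indefinite case: where $\Vee<0$ the parameter $\bdparame[\eps]=-\bdconst$ contributes a negative boundary term, so~\eqref{eqn:weak-robin} is not a priori coercive on $\Wz12(\dome[\eps])$ and the $\liminf$ inequality cannot be extracted from lower semicontinuity alone. I expect to absorb this by establishing an $\eps$-uniform trace estimate of the form
\begin{equation}
  \int_{\bd\hole}\yu^2\dA\le C\Bigl(\radius\int_\cell\abs{\grad\yu}^2\dv+\frac{\Area(\bd\hole)}{\Vol(\cell)}\int_\cell\yu^2\dv\Bigr)
\end{equation}
on each punctured cell; the scaling $\radius\ll\eps$ then ensures that the negative portion of the Robin integral is absorbed by a vanishing multiple of the Dirichlet energy plus the intended zeroth-order piece, restoring uniform coercivity and closing the $\liminf$. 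A subsidiary but delicate point, intrinsic to $\mfdim=2$, is the absence of a critical Sobolev embedding: the $\LlogLone$-formulation of admissibility is dictated by duality with the Moser--Trudinger class $\expLone$, and intervenes precisely when passing from $\Cn1$ potentials to general admissible ones via the continuity step above.
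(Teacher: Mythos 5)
Your route — Mosco convergence of the perturbed Dirichlet forms together with the compact embedding $\Wz12(\mfd)\hookrightarrow\Lp2(\mfd)$ — is a legitimate and genuinely different organisation of the argument from the paper's. The paper does not phrase anything in terms of Mosco convergence: it instead establishes convergence $\mue\tendsto\Vee\dv$ in the \emph{dual Sobolev norm} $\dual{\W1p(\mfd)}$ for all $1<p<\tfrac{\mfdim}{\mfdim-1}$ (Proposition~\ref{prop:measures-converge}), then runs a more hands-on spectral argument: upper semicontinuity of eigenvalues via test subspaces (Proposition~\ref{prop:robin-eigenvalues-upper-semicontinuous}), uniform $\W12$ bounds on extended eigenfunctions (Proposition~\ref{prop:norm-bounded}), Banach--Alaoglu extraction, identification of the weak limit as a Schr\"odinger eigenfunction (Proposition~\ref{prop:equation-in-limit}), and preservation of orthonormality (Proposition~\ref{prop:orthonormality}). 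Both approaches should work, but the paper's choice buys it more: the dual-norm convergence of measures is precisely the ingredient recycled in the proof of the flexibility Theorem~\ref{thm:flexibility}, and the weak $\W12$ convergence of eigenfunctions is part of the stated conclusion of Proposition~\ref{prop:main}. A pure Mosco argument would prove the eigenvalue limit in Theorem~\ref{thm:main} but would not by itself produce those reusable by-products.

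On the technical details, two points deserve attention. First, the chart-pullback construction of the site set is not quite right as stated: near chart overlaps, pulled-back lattice points from different charts can be arbitrarily close together, which destroys the lower bound $\Vol(\cell)\gtrsim\eps^\mfdim$ you need. The paper sidesteps this by taking $\sites$ to be a \emph{maximal $\eps$-separated set} and using its Voronoi tessellation, which is intrinsic and delivers the two-sided volume bound immediately; you should use that construction instead. Second, and more substantively, the paper also excludes from perforation those interior cells where $\abs{\Vee}<\eps^{1/2}$ (the set $\osites$). In the paper's argument this is essential because Proposition~\ref{prop:cell-integral-estimate} divides by $\int_{\cell}\Vee\dv$, which must be bounded away from zero. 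Your Riemann-sum comparison avoids that division, so you may not strictly need the exclusion; but you should verify that holes in cells where $\Vee$ changes sign (and so $\radius>0$ with $\bdparame$ of a fixed sign while $\Vee$ takes both signs nearby) do not spoil the liminf estimate. Your coercivity trace inequality is correct — it is exactly the paper's Lemma~\ref{lem:euc-trace} specialised to $p=2$ and the scaling $\Rrate=\eps/2$, $\rrate=\radius$, with the $\alpha\Area(\bd\hole)/\Vol(\cell)=\abs{\Vee(\site)}$ identity making the absorption work — but the liminf direction (replacing boundary traces by cell averages and passing to the limit along a weakly convergent sequence) is sketched quite loosely and is where you would need to spend most of the remaining work; it is the analogue of the paper's Proposition~\ref{prop:measures-converge}, which is by far the longest and most delicate step.
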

We draw attention now to the fact that a considerable amount of
freedom is afforded to us in the choice of the parameter $\bdconst>0$.
For \emph{any} fixed $\bdconst$, the families
$\sequence{\dome[\eps]}{\eps>0}$ in the statements of
Theorems~\ref{thm:main-nonneg} and~\ref{thm:main} can be constructed
such that we have the convergence to the Schr\"odinger spectrum as
$\eps\tendsto0$. In fact, we show that this convergence is still
achieved even if $\bdconst$ is given as a function of $\eps$, provided
that it does not grow or decrease too quickly as $\eps\tendsto 0$;
this is an essential ingredient that makes possible the proof of the
flexibility results outlined in the following section.

\subsection{Flexibility of Schr\"odinger potentials}
Given an optimisation problem, one may be interested in the problem of
\emph{stability}, which is whether a candidate which comes close to
optimising a given functional must also be close to an optimiser in
some other sense. For example, for the problem of minimising the first
Dirichlet eigenvalue among domains in Euclidean space, one has a bound
on the Fraenkel asymmetry of a domain $\dom\subseteq\R^n$ in terms of
the difference $\lam[1]{\Dir}(\dom)-\lam[1]{\Dir}(B)$ between the
first Dirichlet eigenvalue of $\dom$ and that of a ball $B$ of the
same volume. With respect to the Fraenkel asymmetry, one also has
stability for the Szeg\H o--Weinberger inequality for the first
Neumann eigenvalue, and for the Brock--Weinstock inequality for the
first non-trivial Steklov eigenvalue normalised by
volume. See~\cite[Chapter 7]{b-p} for a survey of related results.

On the other hand, Bucur and Nahon~\cite{bucur-nahon} show for any
smooth conformal domains $\dom_1,\dom_2\subseteq\R^2$ that there
exists a domain $\dom$ homeomorphic to $\dom_1$ such that $\bd\dom$ is
contained in an arbitrarily small tubular neighbourhood of
$\bd\dom_1$, whereas the Steklov eigenvalues of $\dom$, normalised by
perimeter, are arbitrarily close instead to those of $\dom_2$. This
demonstrates in particular that the Weinstock inequality for Steklov
eigenvalues normalised by perimeter exhibits \emph{flexibility} with
respect to the Hausdorff distance between boundaries: domains which
come close to maximising an eigenvalue may nevertheless be in this
sense very different from the optimiser.

The Steklov problem is also an illustrative example concerning the
matter of identifying suitable spaces in which to consider the notion
of proximity between candidates, and therefore the notion of
stability, for an optimisation problem. It is seen, again
in~\cite{bucur-nahon}, that for a domain $\dom\subseteq\R^2$
conformally equivalent to the disc by $\varphi:\dom\to\disc$, the
Steklov problem on $\dom$ is isospectral to a weighted Steklov problem
on the disc, with the weight on $\bd\disc$ given by the push-forward
of the measure on $\bd\dom$ along $\varphi$. Identifying conformal
domains with the weights they define on $\bd\disc$ in this way, the
perimeter-normalised Steklov eigenvalues are stable with respect to
the norm in $\W{-\half}2(\bd\disc)$.

In the present work, we apply the results we have obtained concerning
homogenisation of the Robin problem to the problem of maximising or
minimising Schr\"odinger eigenvalues.
It is necessary to put some conditions on the potential to prevent the
problem from being ill-posed or having trivial solutions. For example,
the problem of optimising the first eigenvalue among non-negative
potentials in the class
\begin{equation}
  \mathcal{V}^\infty_{a,b}=\Big\{
  V\in\Lp\infty(\dom)
  \,:
  a\le V\le b
  \Big\}
  \quad\text{or}\quad
  \mathcal{V}^p_{a}=\Big\{
  V\in\Lp p(\dom)
  \,:
  \int_{\dom}V^p\,\dee x=a
  \Big\}
\end{equation}
on a bounded domain $\dom\subseteq\R^\mfdim$ with Dirichlet boundary
conditions is well-studied; we refer to~\cite[Chapter~8]{henrot} for a
survey of related results.

One might also more generally consider the Schr\"odinger problem with
a measure in place of the potential, by way of the weak formulation;
the way in which this is done is discussed further in
Section~\ref{sec:flexibility}. This perspective motivates the present
work in the following way:
for perforated domains $\dome[\eps]=\mfd\setminus\holes[\eps]$ and
boundary parameters $\bdparame[\eps]:\bd\holes[\eps]\to\R$ appearing
in the statements of Theorems \ref{thm:main-nonneg} and
Theorem~\ref{thm:main}, we consider a potential on $\mfd$ defined as
the push-forward of the measure $\bdparame[\eps]\area$ along the
inclusion $\bd\holes[\eps]\subseteq\mfd$, where $\area$ is the
Hausdorff measure on $\bd\holes[\eps]$ of dimension $\mfdim-1$.
We find then that the very same methods with which we show the
spectrum convergence of Robin eigenvalues give us also that the
Schr\"odinger eigenvalues for potentials $\bdparame[\eps]\area$
converge to $\lam{\Sch}(\mfd,\Vee,\Bdparam)$.
We show furthermore that the measures $\bdparame[\eps]\area$ remain
bounded away from $\Vee$ in $\dual{\W1p(\mfd)}$ for $p\le p_0$. In
particular, for the problem of extremising a Schr\"odinger eigenvalue
among a class of potentials $\mathcal{V}$ which is big enough to
contain the measures $\bdparame[\eps]\area$, if moreover the extremum
is attained by an admissible potential, then the optimal potential is
flexibile with respect to the norm in~$\dual{\W1p(\mfd)}$.

This result is further improved in two ways. First, we show that one
can in turn approximate the measures $\bdparame[\eps]\area$---which
are concentrated on a submanifold of co-dimension one---by measures
with densities given by smooth functions, so we in fact have the
aforementioned flexibility as soon as $\mathcal{V}$ contains these
smooth potentials.
Second, for any $1<p_0<\frac{\mfdim}{\mfdim-1}$, we exhibit an example
which is a witness for flexibility in \emph{precisely} those spaces
$\dual{\W1p(\mfd)}$ with $p\le p_0$: that is, for any such $p_0$, we
construct a family of potentials which approach $\Vee$ in
$\dual{\W1p(\mfd)}$ for any $p>p_0$, and yet is bounded away from
$\Vee$ in any $\dual{\W1p(\mfd)}$ with $p$ no greater than $p_0$.
\begin{thm}[Flexibility of Schr\"odinger potentials]
  \label{thm:flexibility}
  Let $\mfd$ be a Riemannian manifold with Lipschitz boundary of
  dimension $\mfdim\ge 2$, and let $\Vee$ be an admissible potential
  on $\mfd$.  Let $1<p_0<\frac{d}{d-1}$. Let $\zbd\subseteq\bd\mfd$,
  and let $\Bdparam\in\Lp\infty(\bdmfd)$.

  Then there exist a family of functions
  $\sequence{\Vee_\eps}{\eps>0}$ in $\Cn\infty(\mfd)$ such that
  \begin{itemize}
  \item
    For each $k\in\Npos$,
    \begin{equation}
      \lam{\Sch}(\mfd,\Vee_\eps,\Bdparam)\tendsto\lam{\Sch}(\mfd,\Vee,\Bdparam)
    \end{equation}
    as $\eps\tendsto 0$,
  \item For all $p>p_0$, $\Vee_\eps\tendsto\Vee$ in $\dual{\W1p(\mfd)}$, and
  \item For all $1\le p\le p_0$, There exists a constant $C_{\mfd,\Vee}>0$
    such that
    \begin{equation}
      \norm[\dual{\W1p(\mfd)}]{\Vee-\Vee_\eps}>C_{\mfd,\Vee}
    \end{equation}
    for all $\eps>0$.
  \end{itemize}
\end{thm}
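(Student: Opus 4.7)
The plan is to combine the homogenisation of Theorem~\ref{thm:main} (in the refinement mentioned at the end of Section~1.1, in which $\bdconst$ is allowed to depend on $\eps$) with a quantitative analysis of the resulting surface-concentrated potentials in negative Sobolev spaces; the critical exponent $p_0$ emerges by an appropriate choice of hole radius.

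First, I would apply the homogenisation with hole radius $\rrate_\eps\sim\eps^\alpha$ for $\alpha=\mfdim/(\mfdim-p_0)$, together with the $\eps$-dependent parameter $\bdconst_\eps\sim\eps^{\mfdim-\alpha(\mfdim-1)}$ chosen so that $\bdconst_\eps\rrate_\eps^{\mfdim-1}\eps^{-\mfdim}$ matches $\Vee$ as a pointwise density. The hypothesis $1<p_0<\mfdim/(\mfdim-1)$ places $\alpha$ in the range $(\mfdim/(\mfdim-1),(\mfdim-1)/(\mfdim-2))$, which is exactly where the refined Theorem~\ref{thm:main} applies. This produces $\sequence{\dome[\eps]}{\eps>0}$ and $\bdparame[\eps]$ with $\lam{\Rob}(\dome[\eps],\bdparame[\eps],\Bdparam)\tendsto\lam{\Sch}(\mfd,\Vee,\Bdparam)$. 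Pushing $\bdparame[\eps]\area$ forward to a signed measure $\nue$ on $\mfd$ via $\bd\holes[\eps]\subseteq\mfd$ and replaying the Theorem~\ref{thm:main} convergence argument directly in the weak Schr\"odinger formulation with $\nue$ in place of $\Vee$ yields $\lam{\Sch}(\mfd,\nue,\Bdparam)\tendsto\lam{\Sch}(\mfd,\Vee,\Bdparam)$.

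Next, I would take $\Vee_\eps\in\Cn\infty(\mfd)$ to be a mollification of $\nue$ on a scale $\delta_\eps\ll\rrate_\eps$. A direct Rayleigh-quotient argument (exploiting that the first $k$ eigenfunctions for $\nue$ are uniformly continuous with uniformly bounded $L^\infty$ norms, and that $\abs{\nue}(\mfd)$ is uniformly bounded) gives $\lam{\Sch}(\mfd,\Vee_\eps,\Bdparam)=\lam{\Sch}(\mfd,\nue,\Bdparam)+\littleo(1)$, so the first bullet follows. For the second bullet, I would write $\nue-\Vee\dv=\divg Z_\eps$ weakly with $Z_\eps=\grad w_\eps\in\Lp{p'}(\mfd)$, where on each cell $\cell$ the corrector $w_\eps$ solves $-\lapl w_\eps=-\Vee$ in $\cell$, vanishes on $\bd\cell$, and has prescribed jump $\bdparame[\eps]$ in its normal derivative across $\bd\ball{\site}{\rrate_\eps}$. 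A scaling computation on the annular profile of $w_\eps$ yields $\norm[\Lp{p'}]{\grad w_\eps}^{p'}\sim\eps^{\mfdim(p-p_0)/((p-1)(\mfdim-p_0))}$, which tends to zero exactly in the range $p>p_0$; combined with a mollifier estimate showing $\norm[\dual{\W1p(\mfd)}]{\Vee_\eps-\nue}\tendsto 0$ for $p>p_0$, this gives the second bullet.

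The main obstacle is the third bullet, for which I would produce an explicit test function $f_\eps\in\W1p(\mfd)$ witnessing the lower bound directly against $\Vee_\eps$. Take $f_\eps=\sum_\site\psi_\site$ where each $\psi_\site$ is a radial bump equal to $1$ on $\ball{\site}{\rrate_\eps}$ and supported in a concentric ball of radius of order $\rrate_\eps$. Because $\delta_\eps\ll\rrate_\eps$, the support of $\Vee_\eps$ lies in a narrow neighbourhood of $\bigcup_\site\bd\ball{\site}{\rrate_\eps}$ on which $f_\eps$ is $1$ to leading order, so $\int_\mfd f_\eps\Vee_\eps\dv\tendsto\int_\mfd\Vee\dv$; meanwhile $\int_\mfd f_\eps\Vee\dv\tendsto 0$ by H\"older, using the admissibility of $\Vee$ together with the vanishing volume of the support of $f_\eps$. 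A second scaling computation gives $\norm[\W1p(\mfd)]{f_\eps}^p\sim\eps^{\mfdim(p_0-p)/(\mfdim-p_0)}$, so the resulting quotient is bounded below precisely when $p\le p_0$, with $C_{\mfd,\Vee}$ a fixed fraction of $\int_\mfd\Vee\dv$. The main subtlety is that both scaling computations must be carried out uniformly across all cells on a possibly curved manifold with Lipschitz boundary; I expect this can be handled via normal-coordinate charts around each site and a partition-of-unity localisation, reducing each cell to a Euclidean model annulus.
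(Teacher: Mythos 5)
Your high-level strategy matches the paper's: homogenise with $\eps$-dependent $\bdconst$ chosen so that the hole radius scales as $\eps^{\mfdim/(\mfdim-p_0)}$ (equivalently $\bdconst\sim\eps^{-\mfdim(p_0-1)/(\mfdim-p_0)}$, which is exactly~\eqref{eqn:bdconst-choice}), pass to the push-forward measure $\mue=\bdparame[\eps]\area$, mollify in a tubular neighbourhood, and verify the scaling thresholds. Your corrector analysis for the second bullet is in the spirit of the paper's use of Lemma~\ref{lem:psi}, and the claimed scaling exponent $\mfdim(p-p_0)/(p(\mfdim-p_0))$ agrees with what one gets by inserting~\eqref{eqn:bdconst-choice} into~\eqref{eqn:grad-f-grad-psi}.

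However, the third bullet has a genuine gap. Your trial function $f_\eps=\sum_\site\psi_\site$ is nonnegative (each $\psi_\site$ equals $+1$ on $\ball{\site}{\rrate_\eps}$), so the pairing you estimate converges to $\int_\mfd\Vee\dv$, and your proposed constant $C_{\mfd,\Vee}$ is ``a fixed fraction of $\int_\mfd\Vee\dv$.'' But $\Vee$ is allowed to change sign, and $\int_\mfd\Vee\dv$ can be zero or negative for an admissible nonzero potential, in which case your test function produces no separation at all. The boundary parameter $\bdparame[\eps]$ takes the value $\bdconst\sign(\Vee(\site))$, so a nonnegative bump pairs with $\mue$ to recover the signed total mass, not the $\Lp1$-norm. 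The paper's construction avoids this by building $\wse$ to take the value $\sign(\Vee(\site))$ on $\bd\hole$, so that $\wse\bdparame[\eps]=\abs{\bdparame[\eps]}$ and $\mue(\we)\tendsto\norm[\Lp1(\mfd)]{\Vee}>0$. You need to insert the factor $\sign(\Vee(\site))$ into each bump; once you do, the scaling computations you give go through essentially unchanged.

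A secondary concern is your justification of the first bullet. You invoke uniform continuity and uniform $\Lp\infty$ bounds for the eigenfunctions of the singular potential $\nue$, but eigenfunctions of a Schrödinger operator with a measure concentrated on a hypersurface have a jump in normal derivative across that surface, and for $\mfdim\ge 3$ there is no $\W12\hookrightarrow\Lp\infty$ embedding, so neither property is automatic (and the $\Lp\infty$ bounds would need to be uniform as $\eps\tendsto0$, which requires an argument). The paper sidesteps this: it shows $\Vee_\eps\dv\tendsto\mue$ in $\dual{\W1p(\mfd)}$ (Proposition~\ref{prop:submfd-approx}), then feeds that convergence into Proposition~\ref{prop:main-intermediate} with $\holes[\eps]=\emptyset$, which only requires convergence of the measures in a negative Sobolev norm and never needs pointwise regularity of eigenfunctions. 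You should route your first bullet through the same mechanism rather than a direct Rayleigh-quotient comparison.
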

\begin{rmk}
  One step in the proof of Theorems~\ref{thm:main-nonneg}
  and~\ref{thm:main} is to show that the Schr\"odinger eigenvalues are
  continuous with respect to admissible potentials. This is done in
  Proposition~\ref{prop:schroedinger-approx}. What is essentially the
  same proof also shows continuity of eigenvalues with respect to the
  norm in $\dual{\W1p(\mfd)}$ for any
  $p<\frac{\mfdim}{\mfdim-1}$. That we also have flexibility of
  Schr\"odinger eigenvalues in all such $\dual{\W1p(\mfd)}$, then, is
  in line with an observation made by Karpukhin, Nahon, Polterovich,
  and Stern~\cite[Remark 1.4]{knps}, which is that continuity of
  eigenvalues with respect to some topology is often accompanied by
  the flexibility of the optimisation problem with respect to the same
  topology.
\end{rmk}
\subsection{Background in homogenisation methods}
\label{ssec:background}
We close the introductory section by giving a brief account of some
context surrounding the methods employed in the present work.

Cioranescu and Murat~\cite{cioranescu-murat} considered the Dirichlet
problem
\begin{equation}
  \label{eqn:dirichlet}
  \begin{cases}
    -\lapl[]\yu^\eps=f&\text{on $\dome[\eps]$}\\
    \yu^\eps=0&\text{on $\bd\dome[\eps]$}.
  \end{cases}
\end{equation}
with $f\in\Lp2(\dome[\eps])$, posed on
a family $\sequence{\dom^\eps}{\eps>0}$ of domains in $\R^\mfdim$
which are constructed by taking a bounded domain
$\dom\subseteq\R^\mfdim$ and removing from it a small ball of radius
$r_\eps>0$ around each point in the scaled integer lattice
$\eps\Z^\mfdim$.

Let $\Yu^\eps\in\W12_0(\dom)$ be the extension of $\yu^\eps$ by zero over the
holes. The behaviour of $\Yu^\eps$ as $\eps\tendsto0$ depends on the rate
at which the radii $r_\eps$ of the holes decrease with $\eps$. A
critical rate is identified, depending on the dimension $\mfdim$, such
that $\Yu^\eps$ converge weakly in $\W12_0(\dom)$ to $\yu\in\W12_0(\dom)$
which solves not the Dirichlet problem on $\dom$, but the problem for
the Laplacian plus a multiplication operator $\mu^D$:
\begin{equation}
  \label{eqn:strange-term}
  \begin{cases}
    (-\lapl[]+\mu^D)\yu=f&\text{on $\dom$}\\
    \yu=0&\text{on $\bd\dom$}.
  \end{cases}
\end{equation}
Kaizu~\cite{kaizu} presented analogous results for Robin and Neumann
boundary conditions. These results were strengthened by
Cherednichenko, Dondl, and R\"osler \cite{cdr}, who proved them for
unbounded domains, and showed convergence in the norm-resolvent sense.

Results of this kind suggest that one might hope to uncover new
results about one spectral problem by exhibiting it as the homogenised
limit of another. This strategy has been successfully applied, for
example, by Girouard, Karpukhin, and Lagac\'e \cite{gkl}, to obtain
optimal upper bounds for Steklov eigenvalues among planar domains of
fixed perimeter. This was done by showing that the homogenised limit
of the Steklov problem on a family of perforated domains is a weighted
Neumann problem, whose eigenvalues are in turn related to those of the
Laplacian on the sphere.

A homogenisation construction in the Riemannian setting is described
by Girouard and Lagac\'e \cite{girouard-lagace}. The novel feature
that makes this generalisation possible is a method for distributing
the holes on the manifold without depending on any periodic structure
of the manifold or of some ambient space, as such a structure is in
general unavailable. Here, the role of progressively fine lattices in
the Euclidean setting is played instead by a sequence of maximally
$\eps$-separated sets with decreasing $\eps$. The weighted Laplace
eigenvalue problem on a closed manifold is shown in this way to be the
homogenised limit of the Steklov problem on a family of perforated
domains. In addition to proving several eigenvalue inequalities, this
homogenisation result also has applications concerning free-boundary
minimal surfaces in the sphere, via results of Fraser and Schoen~\cite{fraser-schoen}.

\subsection{Plan of the paper}
\label{ssec:outline}
The remainder of this article is organised as follows:

In Section~\ref{sec:notation}, we fix notational conventions, and
briefly review a number of results from functional analysis which are
relevant to the proof of the main results.

We begin Section~\ref{sec:robin-homogenisation} by describing in
detail the construction of Robin problems on a family of perforated
domains, and then proceed to prove a number of propositions related to
analysis on these domains. In particular, towards the end of
Section~\ref{ssec:analytic-properties}, we prove
Proposition~\ref{prop:measures-converge}, which makes precise the
condition~(ii) described in the previous section. Using these results,
we then finish the proofs of the main results
Theorems~\ref{thm:main-nonneg} and~\ref{thm:main} in
Section~\ref{ssec:problem-in-limit}.

In the final section, we discuss the problem of optimisation of
Schr\"odinger eigenvalues, and prove Theorem~\ref{thm:flexibility}.

\section{Notation and conventions}
\label{sec:notation}
We summarise below some notation and terminology used throughout this
work, and recall some relevant results.

\subsection{Measures on Riemannian manifolds}
Henceforth we take \emph{Riemannian manifold} to mean a \emph{compact
Riemannian manifold with Lipschitz boundary}, which we recall is
defined to be a bounded domain with (possibly empty) Lipschitz
boundary in a complete smooth Riemannian manifold. This is following
the terminology of Karpukhin and Lagac\'e~\cite{karpukhin-lagace}.
We refer to Riemannian manifolds with empty boundary as \emph{closed}.

On a Riemannian manifold $\mfd$, we abuse notation and write $\volume$
for both the Riemannian volume measure and for its density: for
example, we write $\volume(U)=\int_U1\dv$ for the volume of
$U\subseteq\mfd$. Likewise, we write $\area$ for the Hausdorff measure
of co-dimension one restricted to a submanifold.

Where a measure on a Riemannian manifold is omitted in the notation,
it is taken to be the volume measure induced by the metric. For
example, the space $\Lp p(\mfd)$ on a Riemannian manifold $(\mfd,g)$
is the space $\Lp p(\mfd,\volume)$.

\subsection{Asymptotic notation}
In computations, we reserve the notation $C$ for a strictly positive
constant, which may vary line-by-line. Subscripts on the notation
indicate dependence on parameters; for example, we write $C_\alpha$
for a positive constant that can be chosen depending only on some
parameter $\alpha$.

We make extensive use of \emph{asymptotic notation}, summarised as
follows:
\begin{itemize}
\item
  We write \emph{$f(x)\lesssim h(x)$ as $x\tendsto a$}, or
  equivalently \emph{$f(x)=\bigO(h(x))$ as $x\tendsto a$}, to mean
  that there exists a constant $C>0$ such that $\abs{f(x)}\le
  C\abs{h(x)}$ for all $x$ sufficiently close to $a$.
\item
  We write \emph{$f(x)=\littleo(h(x))$ as $x\tendsto a$} to mean that
  $\lim_{x\tendsto a}\frac{f}{h}=0$.
\item
  We may omit the specification of the limit along which the
  asymptotics are considered, when it is is clear from context. For
  example, we may simply write $f\lesssim h$, $f=\bigO(h)$, or
  $f=\littleo(h)$, without explicitly stating that these are
  e.g. asymptotics as some argument $x$ tends to $a$.
\item
  We write $f\asymp h$ to mean that $f\lesssim h$ and $h\lesssim f$.
\item
  We also use the big-$O$ notation in place of explicitly writing down
  a term with the corresponding asymptotic behaviour. For example, if
  $f-1\lesssim h$, we may write $f=1+\bigO(h)$.
\item
  Subscripts on the notation indicate that the relevant constants may
  depend on the parameter in the subscript. This includes those that
  occur in the definition of the limit: for example, we may write
  \emph{$f(x,y)\lesssim_y h(x,y)$ as $x\tendsto 0$} to mean that for
  any $\eps>0$, there exists $\delta(y)>0$ such that $\abs{f(x,y)}\le
  C_y\abs{h(x,y)}$ for all $\abs{x}<\delta(y)$.
\end{itemize}

\subsection{Function spaces and embedding theorems}
\label{ssec:function-spaces-embedding-theorems}
Finally, let us recall some results concerning Sobolev spaces.  Here
we reproduce only the statements, and direct the reader to the
references for details:
Propositions~\ref{prop:sobolev-multiplication-i} and
\ref{prop:sobolev-multiplication-ii} appear as~\cite[Lemma~4.9]{gkl}
and \cite[Lemma~4.10]{gkl}, respectively. Refer also to \cite{mazya}
for results of the kind of \ref{prop:sobolev-multiplication-i}. For a
treatment of Orlicz spaces, including
Propositions~\ref{prop:expLone-dual-LlogLone}
through~\ref{prop:orlicz-sobolev-embedding}, see \cite{b-s}.
\begin{prop}[Sobolev multiplication theorem, I]
  \label{prop:sobolev-multiplication-i}
  Let $\mfd$ be a Riemannian manifold of dimension
  $\mfdim\ge3$. Multiplication of smooth functions extends by density
  to a continuous bilinear map
  $\W12(\mfd)\times\W12(\mfd)\to\W1{\frac{\mfdim}{\mfdim-1}}(\mfd)$.
\end{prop}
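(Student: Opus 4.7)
The plan is to first establish the bilinear estimate
\begin{equation*}
\norm[\W1{\frac{\mfdim}{\mfdim-1}}(\mfd)]{uv}
\lesssim
\norm[\W12(\mfd)]{u}\,\norm[\W12(\mfd)]{v}
\end{equation*}
for $u,v$ smooth up to the boundary, and then extend to arbitrary $\W12(\mfd)$ pairs by density. The latter uses the standard fact that smooth functions up to the boundary are dense in $\W12(\mfd)$ on a compact Riemannian manifold with Lipschitz boundary (Meyers--Serrin combined with a bilinear Lipschitz extension to an open ambient subset, for instance).

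For smooth $u,v$ one has the pointwise product rule $\grad(uv) = u\grad v + v\grad u$. I would apply H\"older's inequality with the exponents $\tfrac{2\mfdim}{\mfdim-2}$ and $2$, whose reciprocals sum to $\tfrac{\mfdim-1}{\mfdim}$, to obtain
\begin{equation*}
\norm[\Lp{\mfdim/(\mfdim-1)}(\mfd)]{u\grad v}
\le
\norm[\Lp{2\mfdim/(\mfdim-2)}(\mfd)]{u}\,
\norm[\Lp2(\mfd)]{\grad v},
\end{equation*}
and symmetrically for $v\grad u$. The same H\"older pairing also controls $\norm[\Lp{\mfdim/(\mfdim-1)}(\mfd)]{uv}$ by $\norm[\Lp{2\mfdim/(\mfdim-2)}(\mfd)]{u}\,\norm[\Lp2(\mfd)]{v}$.

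The hypothesis $\mfdim \ge 3$ enters only here, through the Sobolev embedding $\W12(\mfd)\hookrightarrow\Lp{2\mfdim/(\mfdim-2)}(\mfd)$, valid in this dimension range on a compact Riemannian manifold with Lipschitz boundary (by a partition-of-unity argument with boundary-flattening charts, reducing to the Euclidean half-space case). In dimension two the target exponent becomes $+\infty$ and the embedding fails at this endpoint; this is presumably what forces the Orlicz replacement appearing in Proposition~\ref{prop:sobolev-multiplication-ii}. Combining the embedding with the H\"older estimates yields the required bilinear bound on smooth pairs. Density then gives the continuous extension, as the estimate itself shows that $(u_n v_n)$ is Cauchy in $\W1{\mfdim/(\mfdim-1)}(\mfd)$ whenever $u_n\tendsto u$ and $v_n\tendsto v$ in $\W12(\mfd)$, and that the limit is independent of the choice of approximating sequences. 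There is no substantive obstacle here: the core of the argument is the correct bookkeeping of conjugate exponents together with the invocation of the endpoint Sobolev embedding.
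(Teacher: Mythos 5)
The paper does not provide its own proof of this proposition; it cites \cite[Lemma~4.9]{gkl} and \cite{mazya}. Your argument---the product rule, H\"older with exponents $\tfrac{2\mfdim}{\mfdim-2}$ and $2$ whose reciprocals sum to $\tfrac{\mfdim-1}{\mfdim}$, the Sobolev embedding $\W12(\mfd)\hookrightarrow\Lp{2\mfdim/(\mfdim-2)}(\mfd)$ valid for $\mfdim\ge3$, and the standard density extension of a bounded bilinear map---is correct and is the standard proof, almost certainly the route taken in the cited references.
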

For $\mfdim=2$, we still have that multiplication is a continuous map
$\W12(\mfd)\times\W12(\mfd)\to\W1p(\mfd)$ for all $1\le p<2$, but not
for $p=2$. This is sharpened into the following:
\begin{prop}[Sobolev multiplication theorem, II]
  \label{prop:sobolev-multiplication-ii}
  Let $\mfd$ be a $2$-dimensional Riemannian manifold. Multiplication
  of smooth functions extends by density to a continuous bilinear map
  $\W12(\mfd)\times\W12(\mfd)\to\W1{2,-\half}(\mfd)$.
\end{prop}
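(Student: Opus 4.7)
The plan is to reduce to the case of smooth $u,v$ by density, where the product rule gives $\grad(uv)=u\grad v+v\grad u$, and to establish the continuity estimate
\begin{equation*}
  \norm[\W1{2,-\half}(\mfd)]{uv}
  \lesssim\norm[\W12(\mfd)]{u}\norm[\W12(\mfd)]{v}.
\end{equation*}
Density of smooth functions in $\W12(\mfd)$, which holds on a Riemannian manifold with Lipschitz boundary, then extends multiplication continuously to all of $\W12(\mfd)\times\W12(\mfd)$.

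First I would invoke the critical Sobolev embedding in dimension two, $\W12(\mfd)\hookrightarrow\expL{2}(\mfd)$, which is the manifold form of the Moser--Trudinger inequality. Both $u$ and $v$ then lie in $\expL{2}(\mfd)$ with norms controlled by their $\W12$-norms. This exponential integrability is precisely the refinement that allows the product to land in a Zygmund-type space strictly better than $\W1p(\mfd)$ for $p<2$.

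Second, I would apply a generalised H\"older inequality in Orlicz--Zygmund scales: for $f\in\expL{2}(\mfd)$ and $h\in\Lp2(\mfd)$,
\begin{equation*}
  \norm[\LlogL2{-\half}(\mfd)]{fh}
  \lesssim\norm[\expL{2}(\mfd)]{f}\norm[\Lp2(\mfd)]{h}.
\end{equation*}
Applied to $f=u$, $h=\abs{\grad v}$ and symmetrically, this bounds $\grad(uv)$ in the Zygmund space underlying $\W1{2,-\half}(\mfd)$. The product $uv$ itself is handled by the easier chain $\expL{2}\cdot\expL{2}\hookrightarrow\expLone$ followed by the embedding of $\expLone$ into the target space.

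The main obstacle is verifying the above Orlicz--Zygmund H\"older pairing with the correct logarithmic exponent $-\half$: one needs to check that the complementary Young function to $\exp(t^2)-1$, paired against the Young function $t^2$ of $\Lp2$, reproduces the Young function defining $\LlogL2{-\half}$. This is a standard but delicate Young-function computation, and it is the step at which the criticality of dimension two is consumed. Once the pairing is established, summing the two gradient contributions and the lower-order term $uv$ yields the bilinear bound above, and density finishes the proof.
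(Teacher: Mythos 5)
The paper does not supply its own proof of this proposition: it cites~\cite[Lemma~4.10]{gkl}. Your argument is, as far as I can tell, the standard route to this result and almost certainly the same one used there, so let me simply confirm that the chain of estimates you propose does close up.

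The two ingredients are correct. The Moser--Trudinger embedding $\W12(\mfd)\hookrightarrow\expL2(\mfd)$ holds on a compact two-dimensional Riemannian manifold with Lipschitz boundary, and the Orlicz--H\"older pairing you invoke checks out via the $\Phi^{-1}$-criterion: with $\Psi_1(s)=\exp(s^2)-1$, $\Psi_2(s)=s^2$, and $\Phi(s)\asymp s^2/\log s$ for large $s$, one has $\Psi_1^{-1}(t)\asymp(\log t)^{1/2}$, $\Psi_2^{-1}(t)=t^{1/2}$, and $\Phi^{-1}(t)\asymp(t\log t)^{1/2}$, so $\Psi_1^{-1}(t)\Psi_2^{-1}(t)\lesssim\Phi^{-1}(t)$ and therefore $\norm[\LlogL2{-\half}(\mfd)]{fh}\lesssim\norm[\expL2(\mfd)]{f}\norm[\Lp2(\mfd)]{h}$. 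Applied to $u\cdot\grad v$ and $v\cdot\grad u$ this bounds $\grad(uv)$, and your treatment of the zero-order term $uv$ through $\expL2\cdot\expL2\hookrightarrow\expLone\hookrightarrow\LlogL2{-\half}$ (using the finite measure of $\mfd$ for the last inclusion) is also fine. One cosmetic remark: what you are using is not the duality-complementarity of Young functions but the multiplicative criterion $\Psi_1^{-1}\Psi_2^{-1}\lesssim\Phi^{-1}$ for the triple H\"older inequality in Orlicz spaces, so ``complementary Young function'' is a slight misnomer; and strictly one should work with a Young function of the form $s^2/\log(e+s)$ to avoid the singularity of $s^2/\log s$ near $s=1$, which makes no difference on a finite-measure space. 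Neither of these affects the validity of the proof.
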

The space $\W1{2,-\half}$ is an \emph{Orlicz--Sobolev space}. We
recall the relevant definitions:

For a Young function $\Youngf:\R\to\Rpos$ (that is, a convex even
function $\R\to\Rpos$ such that $\Youngf(0)=0$ and
$\lim_{s\to\infty}\Youngf(s)=\infty$), the \emph{Orlicz space}
$\Lp{\Youngf}(\mfd)$ consists of all the measurable functions such
that the integral
\begin{equation}
  \int_\mfd\Youngf\left(\frac{\abs{f}}{\eta}\right)\dv<\infty
\end{equation}
for some $\eta>0$, identifying all functions that are almost
everywhere the same. The space $\Lp{\Youngf}(\mfd)$ can be made into a
Banach space by endowing it with the \emph{Luxemburg norm}, defined
\begin{equation}
  \norm[\Lp{\Youngf}(\mfd)]{f}
  =
  \inf
  \left\{
  \eta>0
  \,:
  \int_\mfd\Youngf\left(\frac{\abs{f}}{\eta}\right)\dv\le 1
  \right\}.
  \end{equation}
For example, the $\Lp p$ space for $1\le p<\infty$ is an example of an
Orlicz space, with $\Youngf(s)=s^p$. Also of interest to us are the
following cases: where $\Youngf(s)=(s(\log s)^a)^p$, in which we
denote the associated Orlicz space $\LlogL{p}{a}(\mfd)$, and where
$\Youngf(s)=\exp(s^a)$, in which we denote the associated Orlicz space
$\expL{a}(\mfd)$. In the case $a=1$, we write $\expLone$ instead of
$\expL1$; in the case $a=p=1$, we write $\LlogLone$ instead of
$\LlogL{1}{1}$.

The Orlicz--Sobolev spaces $\W1{p,a}(\mfd)$ are then defined
analogously to the Sobolev spaces $\W1p(\mfd)$, with the spaces
$\LlogL{p}{a}(\mfd)$ in place of $\Lp p(\mfd)$:
\begin{equation}
  \W1{p,a}(\mfd)=\left\{
  f\in\LlogL{p}{a}(\mfd)
  \,:
  \grad f\in\LlogL{p}{a}(\mfd)
    \right\}
\end{equation}
with the gradient understood in the weak sense.

We recall a number of results concerning dual spaces and continuous
embeddings that are relevant to us:
\begin{prop}[$\expLone\cong\dual{\LlogLone}$]
  \label{prop:expLone-dual-LlogLone}
  For $f\in\expLone(\mfd)$ and $h\in\LlogLone(\mfd)$, we have that
  \begin{equation}
    \int_{\mfd}fh\dv\le C_\mfd\norm[\expLone(\mfd)]{f}\norm[\LlogLone(\mfd)]{h}.
  \end{equation}
\end{prop}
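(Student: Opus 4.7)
The plan is to establish this as a consequence of the generalised Hölder inequality for complementary Orlicz spaces. The key observation is that $\LlogLone(\mfd)$ and $\expLone(\mfd)$ form (up to equivalence of norms) a complementary Young-function pair in the sense of Orlicz space theory.

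First, I would identify a Young function $\Youngf$ equivalent to $s\log(1+s)$ so that $\Lp{\Youngf}(\mfd) = \LlogLone(\mfd)$ with equivalent Luxemburg norms, and compute its complementary Young function $\widetilde{\Youngf}$ via the Legendre transform. One checks that $\widetilde{\Youngf}(t) \asymp e^t - t - 1$ for large $t$, so that on the finite-measure space $(\mfd, \volume)$ one has $\Lp{\widetilde{\Youngf}}(\mfd) = \expLone(\mfd)$ with equivalent Luxemburg norms, the equivalence constants depending only on $\Vol(\mfd)$. All of this is standard material available from \cite{b-s}.

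Second, I would invoke the pointwise Young inequality $st \le \Youngf(s) + \widetilde{\Youngf}(t)$ for $s,t \ge 0$, which holds for any complementary pair by the definition of the Legendre transform. Normalising $f$ by $\eta_1 = \norm[\Lp{\widetilde{\Youngf}}(\mfd)]{f}$ and $h$ by $\eta_2 = \norm[\Lp{\Youngf}(\mfd)]{h}$, applying the pointwise inequality to $|f(x)|/\eta_1$ and $|h(x)|/\eta_2$, and integrating gives
\begin{equation}
\frac{1}{\eta_1 \eta_2} \int_\mfd |fh| \dv \le \int_\mfd \Youngf\!\left(\tfrac{|h|}{\eta_2}\right)\dv + \int_\mfd \widetilde{\Youngf}\!\left(\tfrac{|f|}{\eta_1}\right)\dv \le 2,
\end{equation}
by the definition of the Luxemburg norm. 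This is the classical Hölder-type inequality with constant $2$.

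Finally, combining this with the norm equivalences $\norm[\Lp{\Youngf}]{\dum} \asymp \norm[\LlogLone]{\dum}$ and $\norm[\Lp{\widetilde{\Youngf}}]{\dum} \asymp \norm[\expLone]{\dum}$ on $\mfd$ yields the claim, with $C_\mfd$ absorbing the equivalence constants (hence depending only on $\Vol(\mfd)$). There is no real obstacle here beyond the bookkeeping of the Young-function equivalences; the heart of the argument is simply the pointwise Young inequality followed by integration.
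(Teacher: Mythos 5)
The paper does not present a proof of this statement: it appears in Section~\ref{ssec:function-spaces-embedding-theorems}, where the text explicitly defers to the reference \cite{b-s} for Propositions~\ref{prop:expLone-dual-LlogLone} through~\ref{prop:orlicz-sobolev-embedding}. Your argument---identifying $\LlogLone$ and $\expLone$ as a complementary Orlicz pair up to equivalence of Young functions, applying the pointwise Young inequality, and integrating against the Luxemburg normalisations to obtain the H\"older-type bound with constant $2$---is precisely the standard textbook proof of H\"older's inequality for complementary Orlicz spaces, and is what one would find in \cite{b-s}. It is correct, including the small but necessary care in noting that the Young function defining $\LlogLone$ must be replaced by an equivalent genuine Young function (e.g.\ $s\log(1+s)$) before the Legendre transform can be used, and that the resulting norm equivalences on a finite-measure space introduce the dependence on $\mfd$ through its volume. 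So: correct, and in agreement with the paper's intended (cited) source.
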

Using this relation, we identify $\expLone(\mfd)$ with the dual of
$\LlogLone(\mfd)$.

\begin{prop}[$\W1{2,-\half}\hookrightarrow\expLone$]
  \label{prop:expL-embed}
  We have the continuous embedding
  \begin{equation}
    \label{eqn:expL-embed}
    \W1{2,-\half}(\mfd)\to\expLone(\mfd),
  \end{equation}
  in which the right-hand side is optimal among Orlicz spaces; this is
  an instance of \cite[Example 1]{cianchi} with $p=q=1$.
\end{prop}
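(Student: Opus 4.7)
The plan is to obtain this embedding directly from the sharp Orlicz--Sobolev embedding theorem of Cianchi, as the parenthetical indicates. The statement has two parts: continuity of the inclusion $\W1{2,-\half}(\mfd)\to\expLone(\mfd)$, and optimality of $\expLone$ as an Orlicz target space. Both are addressed in the same step once the correct instance of Cianchi's theorem is identified.

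First I would translate the definitions into the language of Cianchi's framework. The Young function $\Youngf$ underlying $\W1{2,-\half}(\mfd)$ is, up to equivalence of Young functions, $\Youngf(s)=s^2(\log(e+s))^{-1}$ for large $s$; this is the critical case of a logarithmic perturbation of $s^2$, evaluated at the ambient dimension $\mfdim=2$. Cianchi's construction of the Sobolev-conjugate Young function $\Youngf_\mfdim$ in this regime yields, after the standard integral transform, a target Young function equivalent to $e^s-1$; this is precisely the Young function defining $\expLone(\mfd)$. The continuity of the embedding then follows from the general theorem, and optimality among Orlicz spaces is part of the conclusion.

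Second, because the proof ultimately rests on a cited result, the only genuine work is to verify that the parameters used in \cite[Example 1]{cianchi} with $p=q=1$ correspond to our setup. This amounts to checking that the auxiliary functions used in Cianchi's normalisation match the Young function of $\W1{2,-\half}(\mfd)$ modulo the standard equivalence that preserves the associated Orlicz space and Luxemburg norm up to equivalent norms, and that the dimension parameter is taken to be $\mfdim=2$, which is the setting in which the critical exponential Orlicz space appears as the sharp target.

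The main obstacle, such as it is, lies in this bookkeeping: one must make sure that the convention differences between the notation in Section~\ref{sec:notation} (Luxemburg norm, the $\W1{p,a}$ notation) and the one used in \cite{cianchi} do not alter the conclusion, and in particular that the exponent of $s$ inside $\exp$ in the target does come out to $1$ and not some other power, so that the target is exactly $\expLone(\mfd)$ rather than an $\expL{a}(\mfd)$ for $a\ne 1$. Once this identification is made, the embedding and its optimality are immediate.
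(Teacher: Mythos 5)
Your proposal is correct and takes the same route as the paper, which offers no argument beyond the citation to \cite[Example 1]{cianchi}. You correctly unwind what is implicit there: the Young function of $\W1{2,-\half}$ is, up to equivalence, $s^2(\log s)^{-1}$; in ambient dimension $\mfdim=2$ this is the borderline case, and Cianchi's Sobolev-conjugate construction produces the Young function $e^s-1$ as the optimal Orlicz target, giving both the embedding and its optimality. The only thing to do, as you say, is to match conventions and verify that the exponent inside the exponential comes out to~$1$; this is exactly the content of the citation the paper makes, so you have captured the argument.
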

\begin{prop}[Embedding of Orlicz--Sobolev spaces]
  \label{prop:orlicz-sobolev-embedding}
  For every $p\ge1$, $a\ge 0$,and $\eps>0$, we have the continuous
  embeddings
  \begin{equation}
    \label{eqn:orlicz-sobolev-embedding}
    \W1{p+\eps}(\mfd)
    \subset
    \W1{p,a}(\mfd)
    \subset
    \W1p(\mfd)
    \subset
    \W1{p,-a}(\mfd)
    \subset
    \W1{p-\eps}(\mfd)
  \end{equation}
  from which follows also embeddings of the respective dual spaces,
  with the inclusions going in the opposite direction.
\end{prop}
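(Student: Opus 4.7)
The plan is to reduce the Orlicz--Sobolev embeddings to embeddings at the level of the underlying Orlicz spaces, together with the analogous statement for gradients, both of which follow by direct comparison of the defining Young functions. Since $\W1{p,a}(\mfd)$ consists of those $f \in \LlogL{p}{a}(\mfd)$ with weak gradient in $\LlogL{p}{a}(\mfd)$, and its norm is (up to equivalence) the sum of the Luxemburg norms of $f$ and $|\grad f|$, it suffices to establish the four Orlicz inclusions
\begin{equation}
\Lp{p+\eps}(\mfd)
\subset
\LlogL{p}{a}(\mfd)
\subset
\Lp{p}(\mfd)
\subset
\LlogL{p}{-a}(\mfd)
\subset
\Lp{p-\eps}(\mfd)
\end{equation}
as continuous embeddings, and then read the Sobolev version off componentwise.

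For the Orlicz inclusions, I would use the standard criterion on a finite measure space (which applies since $\mfd$ is compact): if $\Youngf_1, \Youngf_2$ are Young functions and there exist constants $C, s_0 > 0$ with $\Youngf_1(s) \le C\,\Youngf_2(s)$ for all $s \ge s_0$, then $\Lp{\Youngf_2}(\mfd) \subset \Lp{\Youngf_1}(\mfd)$ continuously, with the embedding constant depending only on $C$, $s_0$, and $\Vol(\mfd)$. Applied to the Young functions $s \mapsto s^{p+\eps}$, $s \mapsto (s(\log s)^{a})^p$, $s \mapsto s^p$, $s \mapsto (s(\log s)^{-a})^p$, $s \mapsto s^{p-\eps}$ (suitably regularised near $s = 0$ so that they are genuine Young functions), the required large-$s$ comparisons amount to the elementary facts that $s^\eps$ dominates $(\log s)^{ap}$ as $s \to \infty$, and that $(\log s)^{ap} \ge 1$ for $s$ sufficiently large when $a \ge 0$. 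This gives all four inclusions in turn; continuity of the embedding in each case is automatic from the criterion.

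Once the inclusions are established for scalar-valued functions, the Sobolev version follows: if $f \in \W1{p+\eps}(\mfd)$, then both $f$ and $|\grad f|$ lie in $\Lp{p+\eps}(\mfd) \subset \LlogL{p}{a}(\mfd)$, and the Luxemburg norm estimates give $\norm[\W1{p,a}(\mfd)]{f} \lesssim \norm[\W1{p+\eps}(\mfd)]{f}$; and similarly for the remaining three inclusions. The dual embeddings are then immediate: each continuous inclusion $X \hookrightarrow Y$ with dense image induces a continuous inclusion $\dual Y \hookrightarrow \dual X$ by precomposition, and density in all the cases above holds because smooth functions are dense in every $\W1{p,a}(\mfd)$ and every $\W1p(\mfd)$.

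There is no serious obstacle here; the only bookkeeping step is to ensure that $s \mapsto (s(\log s)^{\pm a})^p$ is replaced by a convex even function vanishing at $0$ and unchanged for large $s$ (the standard device being to set it equal to $s^p$ on, say, $[0,e]$ and to $(s(\log s)^{\pm a})^p$ beyond, with a suitable interpolation), so that it is a bona fide Young function in the sense used in Section~\ref{ssec:function-spaces-embedding-theorems}. Since Luxemburg norms associated to Young functions that agree for large $s$ are equivalent on finite measure spaces, this modification does not affect the resulting Orlicz--Sobolev space.
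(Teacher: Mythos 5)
The paper does not give its own proof of this proposition: it is stated in Section~\ref{ssec:function-spaces-embedding-theorems} explicitly as one of several background results that are "reproduced only as statements", with a pointer to the reference \cite{b-s} for the details. So there is no internal argument to compare against. Your proof is correct and is the standard one that appears in such references: reduce to comparing the Young functions for large argument on a finite measure space, use the elementary fact that $(\log s)^{ap}$ grows slower than any power $s^\eps$ and is eventually at least $1$ for $a\ge0$, handle the small-$s$ regime by the regularisation you describe (which only changes the Luxemburg norm by a factor depending on $\Vol(\mfd)$), pass the Orlicz embeddings through to the Sobolev level by applying them to $f$ and $|\grad f|$ separately, and finally dualise using density of smooth functions.
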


\section{Controlled homogenisation for the Robin problem}
\label{sec:robin-homogenisation}
\subsection{Heuristics}
\label{ssec:outline-of-proof}
Let us begin by considering the setting of a closed Riemannian
manifold $\mfd$, and provide some intuition and motivation for the
construction we shall present in full in the next section.

For $\dom\subseteq\mfd$ and $\bdparam:\bd\dom\to\R$ essentially
bounded, a nonzero $\yu\in\W12(\dom)$ is a Robin eigenfunction for the
eigenvalue $\lam[]{\Rob}(\dom,\bdparam)$ if, for any $v\in\W12(\mfd)$,
\begin{equation}
  \label{eqn:weak-robin-closed}
  \int_{\dom}\grad\yu\cdot\grad v\dv
  +
  \int_{\bd\dom}\yu v\bdparam\dA
  =
  \lam[]{\Rob}
  \int_{\dom}\yu v\dv.
\end{equation}
We define the \emph{Rayleigh quotient} for the Robin problem to be
\begin{equation}
  \label{eqn:robin-rayleigh-closed}
  \Rayleigh{\Rob}(\dom,\bdparam, f)
  =
  \frac{
    \int_{\dom}\abs{\grad f}^2\dv
    +
    \int_{\bd\dom}f^2\bdparam\dA
  }{\int_{\dom} f^2\dv}
\end{equation}
and recall the variational characterisation of Robin eigenvalues
\begin{equation}
  \label{eqn:robin-minimax-closed}
  \lam{\Rob}(\dom,\bdparam)
  =
  \inf_{\substack{F\subseteq\W12(\dom)\\\dim F=k}}\;
  \sup_{f\in F\setminus\{0\}}
  \Rayleigh{\Rob}(\dom,\bdparam, f).
\end{equation}
One similarly formulates the weak Schr\"odinger problem on $\mfd$ for
potential $\Vee$:
\begin{equation}
  \label{eqn:weak-schroedinger-closed}
  \int_{\mfd}\grad \yu\cdot\grad v\dv
  +
  \int_{\mfd}\yu v\Vee\dv
  =
  \lam[]{\Sch}
  \int_{\mfd}\yu v\dv
\end{equation}
for which we have the Rayleigh quotient
\begin{equation}
  \label{eqn:schroedinger-rayleigh-closed}
  \Rayleigh{\Sch}(\mfd,\Vee,f)
  =
  \frac{\int_\mfd\abs{\grad f}^2\dv+\int_\mfd f^2\Vee\dv}{\int_\mfd f^2\dv}
\end{equation}
and whose eigenvalues have the variational characterisation
\begin{equation}
  \label{eqn:schroedinger-minimax-closed}
  \lam{\Sch}(\mfd,\Vee)
  =
  \inf_{\substack{F\subseteq\W12(\mfd)\\\dim F=k}}\;
  \sup_{f\in F\setminus\{0\}}
  \Rayleigh{\Sch}(\mfd,\Vee, f).
\end{equation}
Comparing these pairs of expressions, we are motivated to attempt to
construct domains $\dome[\eps]$ and boundary parameters
$\bdparame[\eps]$ parameterised by $\eps>0$, such that they satisfy
the following properties:
\begin{enumerate}[(i)]
\item
  \emph{As $\eps$ tends to zero, $\dome[\eps]$ eventually exhausts all
  of $\mfd$}. Combined with control of the norm of a family of
  extensions $\gext[\eps]:\W12({\dome[\eps]})\to\W12(\mfd)$,
  this is to the effect that the terms of the form
  $\int_{\dome[\eps]}\dum\dv$ in the expressions
  ~\eqref{eqn:weak-robin-closed} and~\eqref{eqn:robin-rayleigh-closed}
  approach the corresponding terms
  in~\eqref{eqn:weak-schroedinger-closed} and
  ~\eqref{eqn:schroedinger-rayleigh-closed} of the form
  $\int_\mfd\dum\dv$.
\item
  \emph{In the limit, integrating on $\bd\dome[\eps]$
  weighted by the boundary parameter~$\bdparame[\eps]$
  resembles integrating over $\mfd$ weighted by the potential $\Vee$,}
  to the effect that terms of the form
  $\int_{\bd\dome[\eps]}\dum\bdparame[\eps]\dA$
  in~\eqref{eqn:weak-robin-closed} and
  ~\eqref{eqn:robin-rayleigh-closed} approach the corresponding terms
  in ~\eqref{eqn:schroedinger-rayleigh-closed}
  and~\eqref{eqn:weak-schroedinger-closed} of the form
  $\int_\mfd\dum\Vee\dA$.
\end{enumerate}
It is in aiming to satisfy both of these conditions that one is led to
consider a family of domains with many holes, constructed by removing
evenly distributed balls from $\mfd$ whose number grow and radii
shrink as $\eps$ decreases. To satisfy (i), one requires the radii to
decrease sufficiently rapidly as $\eps\tendsto0$. To satisfy (ii), one
adjusts the radius of each hole to capture the weight assigned by
$\Vee$ in a small region near each hole.  This is the mechanism which
permits us the freedom in choosing the constant $\bdconst>0$ as
appears in the statements of Theorems~\ref{thm:main-nonneg}
and~\ref{thm:main}: an increase in the magnitude of the boundary
parameter can be compensated for by decreasing the perimeter of the
holes accordingly to leave the weighted integral unchanged, and vice
versa.

In the case where $\mfd$ is a Riemannian manifold with Lipschitz
boundary, the same strategy can be applied with few modifications. If
the boundary of the holes $\holes[\eps]$ does not intersect the
boundary of $\mfd$, we observe then that the weak formulations
~\eqref{eqn:weak-schroedinger} and~\eqref{eqn:weak-robin} differ
from~\eqref{eqn:weak-schroedinger-closed}
and~\eqref{eqn:weak-robin-closed} only by the term originating from
the boundary conditions on $\bd\mfd$, which is identical between the
Robin and Schr\"odinger problems. Thus adapting the argument sketched
for closed manifolds to work in the case of nonempty boundary amounts
to ensuring that one can avoid placing holes near the boundary while
still having the convergence of $\bdparame[\eps]\bd\holes[\eps]\area$
to $\Vee\volume$ in an appropriate sense.

To fix notation, let us write down the Rayleigh quotients for the
Schr\"odinger problem~\eqref{eqn:weak-schroedinger}, which is
\begin{equation}
  \label{eqn:schroedinger-rayleigh}
  \Rayleigh{\Sch}(\mfd,\Vee,\Bdparam, f)
  =
  \frac{\int_\mfd\abs{\grad f}^2\dv+\int_\mfd f^2\Vee\dv+\int_{\bdmfd}f^2\Bdparam\dA}{\int_\mfd f^2\dv}.
\end{equation}
defined for $f\in\Wz12(\mfd)$, and for the Robin problem~\eqref{eqn:weak-robin}, which is
\begin{equation}
   \label{eqn:robin-rayleigh}
    \Rayleigh{\Rob}({\dome[\eps]},\bdparame[\eps], \Bdparam, f)
    =
    \frac{
      \int_{\dome[\eps]}\abs{\grad f}^2\dv
      +
      \int_{\bd\holes[\eps]}f^2\bdparame[\eps]\dA
      +
      \int_{\bdmfd}f^2\Bdparam\dA
    }{\int_{\dome} f^2\dv}.
\end{equation}
for $f\in\Wz12(\dome[\eps])$.
In this case the infimum in the variational characterisation for the
$k$-th Schr\"odinger eigenvalue $\lam{\Sch}(\mfd,\Vee,\Bdparam)$ is
taken over $k$-dimensional subspaces of $\Wz12(\mfd)$ rather than of
$\W12(\mfd)$. Likewise, the infimum in the variational
characterisation for
$\lam{\Rob}(\dome[\eps],\bdparame[\eps],\Bdparam)$ is taken over
$\Wz12(\dome[\eps])$.

\subsection{The homogenisation construction}
\label{ssec:homogenisation-construction}
We now describe the homogenisation construction in detail. Consider
the Schr\"odinger problem~\eqref{eqn:schroedinger-1} on a Riemannian
manifold $\mfd$ with potential $\Vee\in\Cn1(\mfd)$ and boundary
conditions~\eqref{eqn:schroedinger-2}. For each choice of two positive
parameters $\eps$ and $\bdconst$, we give in this section the
construction of a domain $\dome=\mfd\setminus\holes\subseteq\mfd$. For
those $\eps,\bdconst$ such that $\cl{\holes}$ does not intersect
$\bd\mfd$, we give the construction also of the boundary parameter
$\bdparame:\bd\holes\to\R$.
Later, we shall extract the witnesses to each of our main results by
specifying appropriate choices of the parameter $\bdconst$ as a
function of $\eps$.

The construction begins with a \emph{maximal $\eps$-separated set}
$\sites\subseteq\mfd$, which we recall is a set which is maximal with
respect to inclusion among those subsets $S\subseteq\mfd$ with the
property that $\dist(s_1,s_2)\ge\eps$ for all distinct $s_1,s_2\in S$.
We consider the Voronoi tessellation associated with $\sites$, in
which the Voronoi cell $\cell$ at $\site\in\sites$ is defined
\begin{equation}
  \cell=
  \{x\in\mfd
  \,:
  \dist(x,\site)\le\dist(x,\site')
  \text{ for all }\site'\in\sites
  \}.
\end{equation}
We now write $\sites$ as the union of three disjoint subsets:
\begin{itemize}
\item Let $\bsites$ be all the sites whose associated Voronoi cells
  intersect the boundary of $\mfd$:
  \begin{equation}
    \label{eqn:bsites-dfn}
    \bsites=\{
    \site\in\sites
    \,:
    \cell\cap\bd\mfd\ne\varnothing
    \}.
  \end{equation}
\item Let $\psites$ be the set of all remaining sites where
  $\abs{\Vee}$ on the associated Voronoi cell is pointwise bounded
  away from zero by at least $\eps^\half$
  \begin{equation}
    \label{eqn:psites-dfn}
    \psites=\{
    \site\in\sites\setminus\bsites
    \,:
    \abs{\Vee(x)}>\eps^\half
    \text{ for all }
    x\in\cell
    \}.
  \end{equation}
\item Let $\osites$ be the set of all remaining sites: $\osites=\sites\setminus(\psites\cup\bsites)$.
\end{itemize}
\begin{rmk}
  The specific exponent $\half$ appearing in~\eqref{eqn:psites-dfn} is
  inconsequential, and is here chosen explicitly only as a matter of
  definiteness. From the proof of
  Proposition~\ref{prop:cell-integral-estimate}, we see that any
  positive exponent strictly less than~$1$ suffices for the same
  purposes.
\end{rmk}
The domain $\dome$ is constructed by removing a metric ball
$\hole$ from each $\site\in\psites$, where $\radius\ge0$ is defined by
requiring
\begin{equation}
  \label{eqn:radius-dfn}
  \Area{\bd\hole}=\frac{1}{\bdconst}\abs{\Vee(\site)}\Vol(\cell).
\end{equation}
We define the \emph{perforated cell} $\pcell$ at $\site$ by
$\pcell=\cell\setminus\hole$. For $\site\in\sites\setminus\psites$, we
do not perforate the corresponding cell; instead we set
$\pcell=\cell$, and adopt the convention of taking $\radius=0$ and
$\hole$ to be empty. We write $\holes$ for the union of all the holes
\begin{equation}
  \label{eqn:holes-dfn}
  \holes=\bigcup_{\site\in\psites}\hole
\end{equation}
and finally define the \emph{perforated domain} $\dome$ as the
complement of $\holes$ in $\mfd$.
\begin{figure}[h]
  \includegraphics{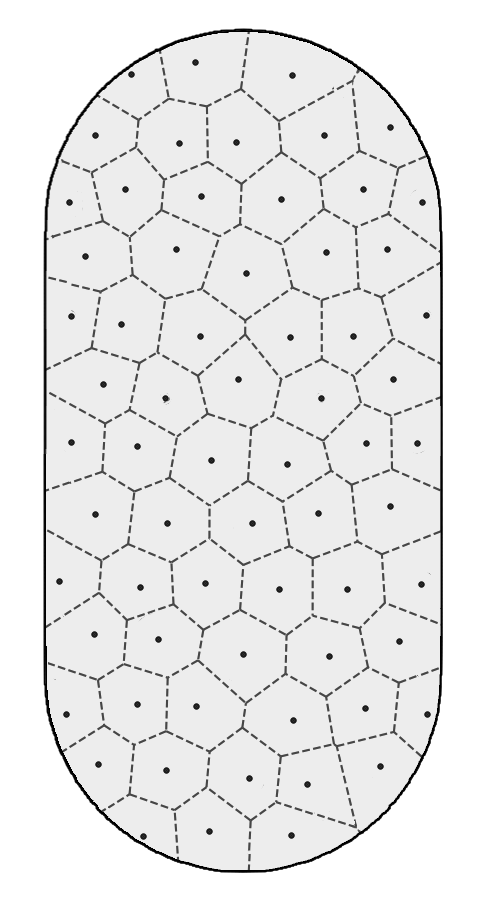}
  \includegraphics{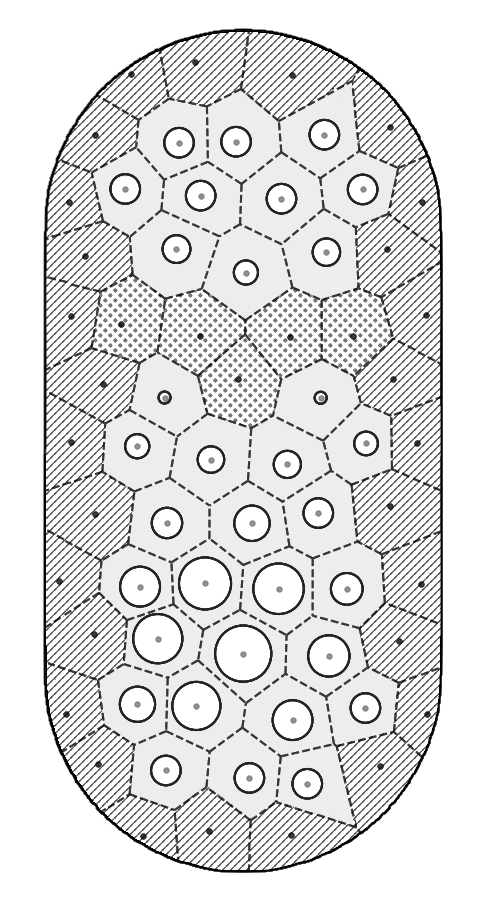}
  \includegraphics{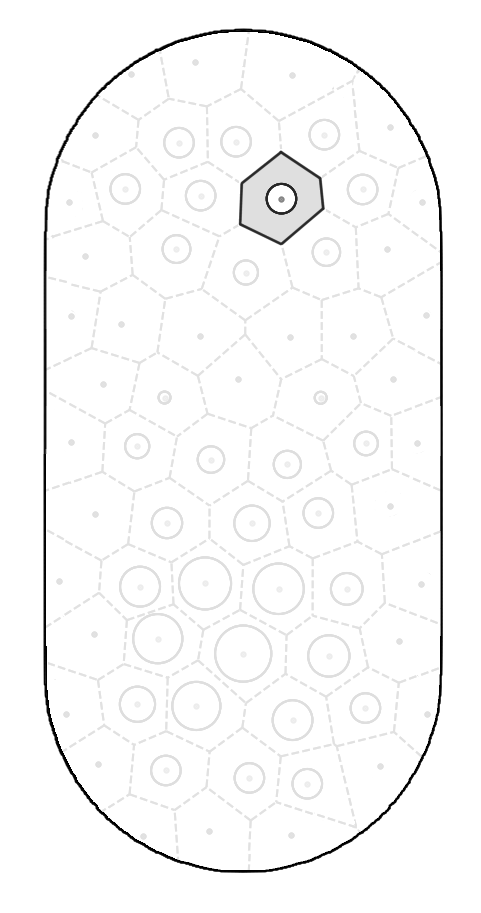}
  \newcommand{\textureblock}[1]{\raisebox{-0.2ex}{\includegraphics[width=1em]{picture/texture-#1}}}
  \label{fig:homogenisaion-construction}
  \caption{Example of the construction of a perforated domain in
    dimension $2$, for a potential $\Vee$ which changes sign across a
    region lying in the middle of the domain. \textbf{Left:} the
    domain; within it a maximal $\eps$-separated set $\sites$ and the
    associated Voronoi tessellation, with the boundaries between cells
    indicated by dotted lines. \textbf{Centre:} the domain with
    perforations, partitioned into three sets:
    \textureblock{shaded}---the cells associated with $\bsites$ ,
    which intersect the boundary. \textureblock{dotted}---the cells
    associated with $\osites$, where $\Vee$ can be close to
    zero. \textureblock{grey}---the cells associated with $\psites$;
    these lie entirely in the interior of $\dom$, where $\Vee$ is
    bounded sufficiently far away from zero. From all such cells we
    remove a metric ball whose radius is given
    by~\eqref{eqn:radius-dfn}; the holes are proportionally larger on
    cells where the magnitude of $\Vee$ is larger.  \textbf{Right:}
    a~single perforated cell $\pcell$ highlighted.}
\end{figure}

Let us additionally adopt the following notation: for ${\bullet}\in\{\bd,\ocircle,\odot\}$,
\begin{equation}
  \label{eqn:idom-dfn}
  \dome_{\bullet}=\bigcup_{\site\in S_{\bullet}^\eps}\pcell
\end{equation}
so that in summary
\begin{equation}
  \label{eqn:dom-dfn}
  \dome
  =
  \mfd\setminus\holes
  =
  \bigcup_{\site\in\sites}\pcell
  =
  \bdom\cup\odom\cup\pdom.
\end{equation}
From~\eqref{eqn:radius-dfn}, it follows that
\begin{equation}
  \label{eqn:radius-asymp}
  \radius
  \asymp_{\mfd,\Vee}
  (\bdconst\inv\eps)^{\frac{1}{\mfdim-1}}\eps.
\end{equation}
From \eqref{eqn:radius-asymp} we deduce that so long as
\begin{equation}
  \label{eqn:bdconst-hyp}
  \bdconst\inv=\littleo[\mfd,\Vee](\eps\inv)
  \quad
  \text{as $\eps\tendsto0$}
\end{equation}
we have that $\radius$ shrinks much faster than $\eps$ as
$\eps\tendsto0$. Since, by construction, each cell $\cell$ contains a
ball of radius $\half\eps$ centred at $\site$, this
implies for all sufficiently small $\eps>0$
each hole $\hole$ is indeed contained compactly within
$\cell$, and the union in~\eqref{eqn:holes-dfn} is
disjoint. Eventually we have that $\bd\dome$ is the disjoint union
\begin{equation}
  \label{eqn:bd-union}
  \bd\dome=\bd\mfd\cup\bd\holes=\bd\mfd\cup\left(
  \bigcup_{\site\in\psites}\bd\hole
  \right).
\end{equation}
In this case the perforated domain can be written as the union of all
perforated cells:
\begin{equation}
  \dome=\bigcup_{\site\in\sites}\pcell. 
\end{equation}
And we define $\bdparame$ by setting
\begin{equation}
  \label{eqn:bdparame-dfn}
  \bdparame\at{\bd\hole}=\bdconst\sign(\Vee(\site))
\end{equation}
for each $\site\in\psites$.

Now that we have specified the construction, let us also fix
henceforth the notation $\yuek\in\Wz12(\dome)$ for the $k$-th Robin
eigenfunction, chosen such that $\sequence{\yuek}{k\in\N}$ is an
orthonormal family in~$\Lp2(\dome)$.

We are now ready to state the following proposition, from which will
follow each of the results presented in
Section~\ref{sec:introduction}:
\begin{prop}
  \label{prop:main}
  Let $\mfd$ be a Riemannian manifold of dimension $\mfdim\ge2$, and
  let $\Vee$ be an admissible potential on $\mfd$. Let
  $1<p<\frac{d}{d-1}$, and let $\bdconst=\bdconst(\eps)$ be such that
  \begin{equation}
    \label{eqn:bdconst-range}
    \lim_{\eps\tendsto0}\bdconst\inv\eps=0
    \quad\text{and}\quad
      \lim_{\eps\tendsto0}\bdconst\eps^{\frac{\mfdim(p-1)}{\mfdim-p}}=0
  \end{equation}
  as $\eps\tendsto0$. Let $\dome$,
  $\bdparame$ be as constructed in
  Section~\ref{ssec:homogenisation-construction}.
  Then, as $\eps\tendsto0$,
  \begin{itemize}
  \item
    $\lam{\Rob}(\dome,\bdparame,\Bdparam)\tendsto\lam{\Sch}(\mfd,\Vee,\Bdparam)$,
    and
  \item
    There exist extensions $\Yuek\in\Wz12(\mfd)$ of
    $\yuek\in\Wz12(\dome)$ such that $\Yuek$ converge, in the weak
    topology on $\Wz12(\mfd)$, to the $k$-th Schr\"odinger
    eigenfunction.
  \end{itemize}
\end{prop}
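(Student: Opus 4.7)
The plan is to establish Proposition~\ref{prop:main} through the min-max characterisations of $\lam{\Sch}$ and $\lam{\Rob}$, combined with two analytic ingredients to be developed earlier in Section~\ref{sec:robin-homogenisation}. The first is a family of extension operators $\gext\colon\Wz12(\dome)\to\Wz12(\mfd)$ whose operator norms are bounded uniformly in $\eps$, which I expect to construct cell by cell by patching harmonic extensions inside each hole $\hole$; uniformity then reduces to a scale-invariant estimate on a single ball together with the fact that the holes shrink much faster than $\eps$ by \eqref{eqn:radius-asymp} and the first condition of \eqref{eqn:bdconst-range}. The second, and decisive, input is Proposition~\ref{prop:measures-converge}, which realises condition~(ii) of the heuristic of Section~\ref{ssec:outline-of-proof} by asserting the convergence of the measure $\bdparame\area$ on $\bd\holes$ towards $\Vee\volume$ in the dual of a Sobolev space strictly larger than $\W1{\frac{\mfdim}{\mfdim-1}}(\mfd)$. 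Condition~(i) of that heuristic is essentially immediate from the same asymptotics, since they give $\Vol(\holes)\tendsto0$.

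For the upper bound $\limsup_{\eps\tendsto0}\lam{\Rob}(\dome,\bdparame,\Bdparam)\le\lam{\Sch}(\mfd,\Vee,\Bdparam)$, I test the Robin min-max against the $k$-dimensional space spanned by the first $k$ Schr\"odinger eigenfunctions $\yu_1,\dots,\yu_k$ restricted to $\dome$. The $\Lp2$-denominator and the Dirichlet term in the numerator of $\Rayleigh{\Rob}$ converge to those of $\Rayleigh{\Sch}$ by condition~(i), and the $\Bdparam$-boundary integral is identical in both quotients. The hole-boundary contribution
\begin{equation}
  \int_{\bd\holes}\yu_i\yu_j\bdparame\dA-\int_\mfd\yu_i\yu_j\Vee\dv\tendsto 0
\end{equation}
is controlled by pairing the convergence of Proposition~\ref{prop:measures-converge} against the product $\yu_i\yu_j$: by the Sobolev multiplication theorems (Propositions~\ref{prop:sobolev-multiplication-i} and~\ref{prop:sobolev-multiplication-ii}) this product lies in $\W1{\frac{\mfdim}{\mfdim-1}}(\mfd)$ for $\mfdim\ge3$ and in $\W1{2,-\half}(\mfd)$ for $\mfdim=2$, both of which embed continuously into the Sobolev space predual to that of the measure convergence.

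The lower bound and the eigenfunction statement will be proved together. Setting $\Yuek=\gext\yuek$, uniformity of $\gext$ together with the upper bound just established gives a uniform $\Wz12(\mfd)$-bound on the family $\sequence{\Yuek}{\eps>0}$, so by a diagonal argument these converge, up to subsequences, weakly in $\Wz12(\mfd)$ and strongly in $\Lp2(\mfd)$ to a family $\Yuuk\in\Wz12(\mfd)$ that remains $\Lp2$-orthonormal. Passing to the limit in the weak formulation \eqref{eqn:weak-robin} tested against smooth $v$ vanishing on $\zbd$, the gradient and $\Bdparam$-boundary terms behave routinely, while the hole-boundary term is handled exactly as in the upper bound to give
\begin{equation}
  \int_{\bd\holes}\yuek v\bdparame\dA\tendsto\int_\mfd\Yuuk v\Vee\dv.
\end{equation}
This shows each $\Yuuk$ satisfies the weak Schr\"odinger equation for the limiting eigenvalue $\laam=\lim_\eps\lam{\Rob}$; combined with $\Lp2$-orthonormality of the family $\{\Yuuk\}_{k\in\Npos}$ and with the upper bound, this forces $\laam=\lam{\Sch}(\mfd,\Vee,\Bdparam)$ for each $k$ and identifies $\Yuuk$ as the $k$-th Schr\"odinger eigenfunction.

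The principal obstacle is the hole-boundary integral: a priori it pairs a measure concentrated on a $(\mfdim{-}1)$-dimensional set against a product of $\Wz12$-functions that need not be essentially bounded. All the difficulty is thus concentrated in Proposition~\ref{prop:measures-converge}, whose dual Sobolev norm must be chosen in a space into which such products embed. This is precisely what forces the constraint $p<\frac{\mfdim}{\mfdim-1}$ in the hypotheses, while the second condition in \eqref{eqn:bdconst-range} will then emerge as the quantitative decay rate needed to ensure that the signed measures $\bdparame\area$ do in fact approach $\Vee\volume$ in that chosen dual norm.
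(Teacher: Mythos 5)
Your proposal tracks the paper's overall architecture faithfully: the upper bound is obtained by plugging a good $k$-dimensional trial space into the Robin min-max and controlling the hole-boundary term by pairing the measure convergence of Proposition~\ref{prop:measures-converge} against products in $\W1{\frac{\mfdim}{\mfdim-1}}(\mfd)$ (or $\W1{2,-\half}(\mfd)$ when $\mfdim=2$) via the Sobolev multiplication theorems; the lower bound and eigenfunction identification come from a uniform $\Wz12(\mfd)$ bound on extended eigenfunctions, a Banach--Alaoglu extraction, passing to the limit in the weak formulation, and preservation of orthonormality. These are exactly the steps carried out in Propositions~\ref{prop:robin-eigenvalues-upper-semicontinuous} through~\ref{prop:orthonormality}. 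A cosmetic difference: the paper tests the min-max against a near-optimal $k$-dimensional subspace of $\Cn\infty(\mfd)\cap\Wz12(\mfd)$, whereas you propose to restrict the Schr\"odinger eigenfunctions themselves; this is harmless once one observes that the Rayleigh-quotient convergence is uniform on any compact subset of $\W12(\mfd)$, but the paper's smooth approximants sidestep any discussion of eigenfunction regularity.

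However, there is a genuine gap: your argument proves the proposition only when $\Vee$ is regular enough (effectively $\Cn1$) for the homogenisation construction of Section~\ref{ssec:homogenisation-construction} to make sense, since the definitions of $\psites$ and of $\radius$ in~\eqref{eqn:psites-dfn} and~\eqref{eqn:radius-dfn} use pointwise values of $\Vee$. The statement is asserted for all \emph{admissible} potentials, i.e.\ general $\Vee\in\Lp{\mfdim/2}(\mfd)$ (or $\LlogLone(\mfd)$ when $\mfdim=2$), and the paper explicitly flags, in the remark following Definition~\ref{dfn:admissible}, that the results are first proven for $\Cn1$ potentials and then extended by a continuity result. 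That continuity result is Proposition~\ref{prop:schroedinger-approx}: one approximates $\Vee$ by $\Cn1$ potentials $\Vee_n$, proves norm-resolvent convergence of the associated Schr\"odinger operators (via the second resolvent identity and a coercivity estimate for $\schop-\lambda$ with suitably chosen $\lambda$), and then diagonalises. Without this step your proof is incomplete for the class of potentials the proposition actually covers; you should add a final reduction showing that the Schr\"odinger spectrum and eigenfunctions depend continuously on the potential in the admissible topology, so that the $\Cn1$ case implies the general one.
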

\begin{rmk}
  The first hypothesis in~\eqref{eqn:bdconst-range} is
  precisely~\eqref{eqn:bdconst-hyp}, which constrains $\bdconst$ not to
  decrease too rapidly as $\eps\tendsto0$. We have seen that this
  ensures via~\eqref{eqn:radius-asymp} that the radii $\radius$ shrink
  sufficiently fast for the definition~\ref{eqn:bdparame-dfn} to make
  sense for all sufficiently small $\eps>0$. The second hypothesis
  in~\eqref{eqn:bdconst-range} is that
\begin{equation}
    \bdconst
    =\littleo[\mfd,\Vee,p]\left(
    \eps^{-\frac{\mfdim(p-1)}{\mfdim-p}}
    \right)
\end{equation}
which on the other hand constrains the rate at which $\bdconst$ may
grow as $\eps$ tends to $0$. The reason for the exponent
$-\tfrac{\mfdim(p-1)}{\mfdim-p}$ is made clear in the proof of
Proposition~\ref{prop:measures-converge}.
\end{rmk}

The first two results presented in Section~\ref{sec:introduction} are
readily recovered:
\begin{proof}[Proof of Theorems~\ref{thm:main-nonneg} and~\ref{thm:main} from Proposition~\ref{prop:main}]
Theorem~\ref{thm:main} is obtained from Proposition~\ref{prop:main} by
choosing $\bdconst(\eps)$ to be a constant independent of $\eps$. When
$\mfd$ is closed, $\bd\dome=\bd\holes$; when $\Vee$ is non-negative,
from $\bdparame$ is constant over all of $\bd\holes$
by~\eqref{eqn:bdparame-dfn}. Thus, for closed $\mfd$ and non-negative
$\Vee$, we recover Theorem~\ref{thm:main-nonneg}.
\end{proof}
In Section~\ref{sec:flexibility}, we shall see that the measures
$\mue[\eps]$ in Theorem~\ref{thm:flexibility} correspond to weighted
boundary measures on domains $\dome[\eps,\bdconst(\eps)]$ where we
control the asymptotic behaviour of $\bdconst$ depending on $p_0$ to
achieve the desired limits.

\subsection{Analytic properties of perforated manifolds}
\label{ssec:analytic-properties}
Throughout this section, let $\mfd$ be a Riemannian manifold of
dimension $\mfdim\ge2$, let $\Vee\in\Cn1(\mfd)$, and let
$\dome=\mfd\setminus\holes\subseteq\mfd$ and
$\bdparame:\bd\holes\to\{-\bdconst,\bdconst\}$ be as constructed in
Section \ref{ssec:homogenisation-construction}. Additionally, let us
write
\begin{equation}
  \gee_\scale=\scale^2\gee
\end{equation}
for the metric on $\mfd$ scaled by a constant factor $\scale>0$.

For $\site\in\mfd$ and $0<r<R$, let us write
$\ann{\site}{r}{R}=\ball{\site}{R}\setminus\cl{\ball{\site}{r}}$ for
the open annulus centred at $\site\in\mfd$ with inner and outer radii
$r$ and $R$, respectively. Then we have the following:
\begin{prop}[Trace bound with scaling on annuli]
  \label{prop:trace-with-scaling-annuli}
  Let $1<p<\mfdim$. For $\Rrate>\rrate>0$, let
  $\atrace:\W1p(\ann{\site}{\rrate}{\Rrate},\volume[\gee_\scale])\to\Lp
  p(\bd{\ball{\site}{\rrate}},\area[\gee_\scale])$ be the trace onto
  the inner boundary of the annulus with radii $(\rrate,\Rrate)$.

  Then there exist constants $C_{\mfd,p}>0$ and $\eps_0>0$ such that
  for all $\Rrate<\eps_0$, we have
  \begin{equation}
    \label{eqn:trace-with-scaling-annuli}
    \norm{\atrace}
    \le
    C_{\mfd,p}
    \max\left\{
    \scale^{-\frac{1}{p}}\rrate^{\frac{\mfdim-1}{p}}\Rrate^{-\frac{\mfdim}{p}},
    \scale^{\frac{p-1}{p}}\rrate^{\frac{p-1}{p}}\right\}.
  \end{equation}
\end{prop}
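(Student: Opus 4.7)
The plan is to reduce the Riemannian estimate to a Euclidean one via normal coordinates, prove the bound there by integration along radial rays, and track the effect of the metric rescaling $\gee \mapsto \gee_\scale = \scale^2 \gee$.

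First, compactness of $\mfd$ provides $\eps_0 > 0$ depending only on $(\mfd, \gee)$ such that for $\Rrate < \eps_0$, the exponential map at any $\site$ identifies $\ann{\site}{\rrate}{\Rrate}$ with the flat annulus $\{x \in \R^\mfdim : \rrate < \abs{x} < \Rrate\}$, in which $\gee$ is bi-Lipschitz comparable to the Euclidean metric with constants uniformly close to $1$. Since the three norms relevant to $\atrace$ therefore differ from their Euclidean counterparts (for the rescaled flat metric $\scale^2 \delta$) by constants depending only on $\mfd$, it suffices to prove the estimate on the flat annulus.

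Second, for smooth $f$ on the flat annulus and each $\omega \in \sphere[\mfdim-1]$, the fundamental theorem of calculus along the radial ray gives $\abs{f(\rrate\omega)} \le \abs{f(\rho\omega)} + \int_\rrate^\rho \abs{\pd{s} f(s\omega)}\dee s$ for $\rho \in [\rrate, \Rrate]$. Multiplying by $\rho^{\mfdim-1}$, integrating over $\rho \in [\rrate, \Rrate]$, and swapping the order of integration in the gradient term produces
\begin{equation*}
  \frac{\Rrate^\mfdim - \rrate^\mfdim}{\mfdim} \abs{f(\rrate\omega)} \le \int_\rrate^\Rrate \abs{f(\rho\omega)} \rho^{\mfdim-1} \dee\rho + \frac{\Rrate^\mfdim}{\mfdim} \int_\rrate^\Rrate \abs{\pd{s} f(s\omega)} \dee s.
\end{equation*}
H\"older's inequality applied to each integral with respect to the measure $\rho^{\mfdim-1}\dee\rho$---where the hypothesis $p < \mfdim$ is used to bound $\int_\rrate^\Rrate \rho^{-(\mfdim-1)/(p-1)}\dee\rho$ by $\frac{p-1}{\mfdim-p} \rrate^{(p-\mfdim)/(p-1)}$, generating the prefactor $\rrate^{(p-\mfdim)/p}$ for the gradient term after raising to the $p$-th power---followed by multiplying by $\rrate^{\mfdim-1}$, integrating over $\sphere[\mfdim-1]$, and absorbing the bounded factor $\Rrate^\mfdim/(\Rrate^\mfdim - \rrate^\mfdim)$ under the assumption $\rrate \le \Rrate/2$ (the regime of interest in the paper), yields
\begin{equation*}
  \norm[\Lp p(\bd\Eball{\rrate})]{f}^p \le C_{\mfd,p} \left[\frac{\rrate^{\mfdim-1}}{\Rrate^\mfdim} \norm[\Lp p(\Eann{\rrate}{\Rrate})]{f}^p + \rrate^{p-1} \norm[\Lp p(\Eann{\rrate}{\Rrate})]{\nabla f}^p\right].
\end{equation*}

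Third, under $\gee \mapsto \scale^2 \gee$ one has $\volume[\gee_\scale] = \scale^\mfdim \volume[\gee]$, $\area[\gee_\scale] = \scale^{\mfdim-1}\area[\gee]$, and $\abs{\grad[\gee_\scale] f}_{\gee_\scale} = \scale^{-1} \abs{\grad[\gee] f}_\gee$. Substituting these rewrites the bracketed terms as $\scale^{-1} \rrate^{\mfdim-1} \Rrate^{-\mfdim} \norm[\Lp p(\volume[\gee_\scale])]{f}^p$ and $\scale^{p-1} \rrate^{p-1} \norm[\Lp p(\volume[\gee_\scale])]{\grad[\gee_\scale] f}^p$, and taking the larger of the two coefficients and extracting the $p$-th root delivers the stated operator norm bound. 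The main obstacle is essentially bookkeeping: tracking the exponents on $\scale$, $\rrate$, and $\Rrate$ through both the bi-Lipschitz reduction and the metric rescaling. There is no genuine analytical difficulty beyond the H\"older integrability check that requires $p < \mfdim$.
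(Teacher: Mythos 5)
Your proposal is correct and takes essentially the same route as the paper: both reduce to the flat annulus via normal coordinates (uniformly over $\site$ by compactness), prove the Euclidean trace bound by the fundamental theorem of calculus along radial rays together with H\"older's inequality against the weight $\rho^{\mfdim-1}\dee\rho$ (with $p<\mfdim$ entering exactly where you use it, in the convergence of $\int \rho^{-(\mfdim-1)/(p-1)}\dee\rho$ near $\rrate$), and then track the scaling of $\volume$, $\area$, and $\abs{\grad f}$ under $\gee\mapsto\scale^2\gee$. The only cosmetic difference is that the paper isolates the Euclidean inequality as a separate lemma and phrases the radial argument via the decomposition $h=\tilde h+H$ with $\tilde h$ radially constant, whereas you obtain the same pointwise bound directly and integrate it; the substance is identical.
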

Before we present the proof, let us first mention the reason we
are interested in this result. In the context of the homogenisation
construction we deduce the following proposition, which is thought of
as bounding the trace of $\W1p$ functions on the perforated cell
$\pcell$ on the boundary $\bd{\hole}$ of the hole:
\begin{prop}[Trace bound with scaling on perforated cell]
  \label{prop:trace-with-scaling}
  For $1<p<\mfdim$, and let
  $\trace:\W1p(\pcell,\volume[\gee_\scale])\to\Lp
  p(\bd\hole,\area[\gee_\scale])$ be the trace onto the boundary of
  the hole at~$\site\in\psites$. Then
  \begin{equation}
    \label{eqn:trace-with-scaling}
    \norm{\trace}
    \lesssim_{\mfd,\Vee,p}
    \max\left\{
    \scale^{-\frac{1}{p}}
    \bdconst^{-\frac{1}{p}}
    ,
    \scale^{\frac{p-1}{p}}
    \bdconst^{-\frac{p-1}{p(d-1)}}
    \eps^{\frac{\mfdim(p-1)}{(\mfdim-1)p}}
    \right\}
  \end{equation}
  as $\eps\tendsto0$.
\end{prop}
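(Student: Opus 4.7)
The plan is to reduce to the annulus result Proposition~\ref{prop:trace-with-scaling-annuli} by observing that, for sufficiently small $\eps$, the perforated cell contains an annulus around $\site$ whose inner and outer radii match the relevant length scales of the homogenisation construction. Specifically, every Voronoi cell $\cell$ contains the ball $\ball{\site}{\eps/2}$, and from the asymptotic~\eqref{eqn:radius-asymp} together with the standing hypothesis $\bdconst\inv = \littleo(\eps\inv)$ we obtain $\radius < \eps/2$ for all sufficiently small $\eps$. Consequently the annulus $A_\eps := \ann{\site}{\radius}{\eps/2}$ is contained in $\pcell$, with $\bd{\hole}$ as its inner boundary component.

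Given $f \in \W1p(\pcell, \volume[\gee_\scale])$, restriction to $A_\eps$ satisfies $\norm[\W1p(A_\eps,\volume[\gee_\scale])]{f} \le \norm[\W1p(\pcell,\volume[\gee_\scale])]{f}$, and the boundary trace of $f$ on $\bd{\hole}$ computed via $\pcell$ agrees with that computed via $A_\eps$. Hence $\norm{\trace} \le \norm{\atrace}$ for the trace $\atrace$ of Proposition~\ref{prop:trace-with-scaling-annuli} with parameters $\rrate = \radius$ and $\Rrate = \eps/2$. Since $\Rrate < \eps_0$ eventually, the bound~\eqref{eqn:trace-with-scaling-annuli} applies.

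It remains to substitute the asymptotic $\radius \asymp_{\mfd,\Vee} (\bdconst\inv \eps)^{\frac{1}{\mfdim-1}} \eps$ from~\eqref{eqn:radius-asymp} into the two arguments of the maximum in~\eqref{eqn:trace-with-scaling-annuli}. For the first argument, the powers of $\eps$ combine as
\begin{equation}
\rrate^{\frac{\mfdim-1}{p}}\Rrate^{-\frac{\mfdim}{p}}
\asymp (\bdconst\inv\eps)^{\frac{1}{p}}\eps^{\frac{\mfdim-1}{p}}\eps^{-\frac{\mfdim}{p}}
= \bdconst^{-\frac{1}{p}},
\end{equation}
yielding the first term of~\eqref{eqn:trace-with-scaling}. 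For the second argument, one finds
\begin{equation}
\rrate^{\frac{p-1}{p}}
\asymp \bdconst^{-\frac{p-1}{p(\mfdim-1)}} \eps^{\frac{p-1}{p}\left(\frac{1}{\mfdim-1}+1\right)}
= \bdconst^{-\frac{p-1}{p(\mfdim-1)}} \eps^{\frac{\mfdim(p-1)}{(\mfdim-1)p}},
\end{equation}
matching the second term of~\eqref{eqn:trace-with-scaling}.

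The argument is essentially a geometric reduction followed by a bookkeeping exercise, and no step is a genuine obstacle; the only place requiring care is verifying that $\radius < \eps/2$ eventually so that the annulus actually fits inside $\pcell$, and keeping track of which asymptotic hypotheses on $\bdconst$ are invoked (only the first one in~\eqref{eqn:bdconst-range} is needed here; the second is not required for this proposition).
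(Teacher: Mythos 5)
Your proposal is correct and follows exactly the same route as the paper's proof: apply Proposition~\ref{prop:trace-with-scaling-annuli} on the annulus with inner radius $\radius$ and outer radius $\Radius=\half\eps$, note that this annulus sits inside $\pcell$ so the $\W1p$-norm can only grow when passing to $\pcell$, and substitute the asymptotic~\eqref{eqn:radius-asymp} for $\radius$ into the two branches of the maximum. Your closing remark that only the first condition in~\eqref{eqn:bdconst-range} (equivalently,~\eqref{eqn:bdconst-hyp}) is used here, to ensure $\radius<\half\eps$ eventually, is also accurate.
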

\begin{proof}[Proof of Proposition~\ref{prop:trace-with-scaling} from Proposition~\ref{prop:trace-with-scaling-annuli}]
  Recall~\eqref{eqn:radius-asymp}, which is that
  $\radius\asymp_{\mfd,\Vee}\bdconst^{-\frac{1}{\mfdim-1}}\eps^{\frac{\mfdim}{\mfdim-1}}$. Let
  us also define $\Radius=\half\eps$. For all sufficiently small
  $\eps>0$, we then have the inclusions
  $\ball{\site}{\radius}\subset\ball{\site}{\Radius}\subset\cell$.
  Applying Proposition~\ref{prop:trace-with-scaling-annuli} with
  $\rrate=\radius$ and $\Rrate=\Radius$, we find that
  \begin{equation}
    \norm[\Lp p(\bd\hole,{\area[\gee_\scale]})]{f}
    \lesssim_{\mfd,\Vee,p}
    \max\left\{
    \scale^{-\frac{1}{p}}
    \bdconst^{-\frac{1}{p}}  
,
    \scale^{\frac{p-1}{p}}
    \bdconst^{-\frac{p-1}{p(d-1)}}
    \eps^{\frac{\mfdim(p-1)}{(\mfdim-1)p}} \right\}
    \norm[\W1p({\ann{\site}{\radius}{\Radius}},{\volume[\gee_\scale]})]{f}.
  \end{equation}
  Finally, we recover the desired inequality by replacing
  $\norm[\W1p({\ann{\site}{\radius}{\Radius}},{\volume[\gee_\scale]})]{f}$
  on the right-hand side with the norm on a larger set $\norm[\W1p
    (\pcell,{\volume[\gee_\scale]})]{f}$.
\end{proof}
\begin{rmk}
  The additional dependence on $\Vee$ in the asymptotic relation in
  ~\eqref{eqn:trace-with-scaling} compared to
  ~\eqref{eqn:trace-with-scaling-annuli} enters
  via~\eqref{eqn:radius-dfn}, which is that the radii
    of the holes depend on the potential $\Vee$.
\end{rmk}
Towards the proof of Proposition~\ref{prop:trace-with-scaling-annuli},
we first present an analogous lemma in the Euclidean setting. Let us
write $\Eball{r}\subseteq\R^\mfdim$ for the ball at the origin with radius
$r>0$, and $\Eann{r}{R}$ for the annulus
$\Eball{R}\setminus\cl{\Eball{r}}\subseteq\R^\mfdim$. Then
\begin{lem}[Trace bound on annuli in $\R^\mfdim$]
  \label{lem:euc-trace}
  Let $0<r\le R\le 1$, and $1<p<\mfdim$. Then there exists $C_{p,\mfdim}>0$
  such that for all $h\in\W1p(\Eball{R})$:
  \begin{equation}
    \label{eqn:euc-trace}
    \norm[\Lp p(\bd{\Eball{r}})]{h}^p
    \le
    C_{p,\mfdim}
    \left(
    r^{\mfdim-1}R^{-\mfdim}
    \norm[\Lp p(\Eann{r}{R})]{h}^p
    +
    r^{p-1}
    \norm[\Lp p(\Eann{r}{R})]{\grad[] h}^p
    \right).
  \end{equation}
\end{lem}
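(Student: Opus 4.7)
The plan is a radial-integration argument in polar coordinates. For $\omega\in\sphere[\mfdim-1]$ and $r\le s\le R$, the fundamental theorem of calculus gives
\begin{equation}
  h(r\omega) = h(s\omega) - \int_r^s\pd{\rho}h(\rho\omega)\,\dee\rho,
\end{equation}
and hence, after extending the integration domain to $[r,R]$ and applying $\abs{a+b}^p\le 2^{p-1}(\abs{a}^p+\abs{b}^p)$, the pointwise bound
$\abs{h(r\omega)}^p\le 2^{p-1}\abs{h(s\omega)}^p+2^{p-1}\bigl(\int_r^R\abs{\pd{\rho}h(\rho\omega)}\,\dee\rho\bigr)^p$.

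I would then control the radial-derivative integral by Hölder's inequality, splitting the measure as $\dee\rho=\rho^{(\mfdim-1)/p}\cdot\rho^{-(\mfdim-1)/p}\,\dee\rho$ so as to produce the natural polar-coordinate weight $\rho^{\mfdim-1}$:
\begin{equation}
  \int_r^R\abs{\pd{\rho}h}\,\dee\rho \le \left(\int_r^R\abs{\pd{\rho}h}^p\rho^{\mfdim-1}\,\dee\rho\right)^{1/p}\left(\int_r^R\rho^{-\frac{\mfdim-1}{p-1}}\,\dee\rho\right)^{(p-1)/p}.
\end{equation}
The hypothesis $p<\mfdim$ enters precisely here: since then $(\mfdim-1)/(p-1)>1$, the second integral is dominated by its contribution at the lower endpoint and bounded by a constant multiple of $r^{-(\mfdim-p)/(p-1)}$. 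Raising the resulting inequality to the $p$-th power thus yields a radial-gradient contribution of the form $C_{p,\mfdim}\,r^{p-\mfdim}\int_r^R\abs{\pd{\rho}h}^p\rho^{\mfdim-1}\,\dee\rho$, which after integrating over $\omega$ is controlled by $C_{p,\mfdim}\,r^{p-\mfdim}\norm[\Lp p(\Eann{r}{R})]{\grad[] h}^p$.

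To remove the free parameter $s$, I would multiply the pointwise bound by $s^{\mfdim-1}$ and integrate in $s\in(r,R)$, dividing through by $\int_r^R s^{\mfdim-1}\,\dee s=(R^\mfdim-r^\mfdim)/\mfdim$. Integrating the result in $\omega\in\sphere[\mfdim-1]$ and multiplying both sides by $r^{\mfdim-1}$ converts the left-hand side into $\norm[\Lp p(\bd\Eball{r})]{h}^p$ and produces, on the right, a bulk term with coefficient $\mfdim\, r^{\mfdim-1}/(R^\mfdim-r^\mfdim)$ in front of $\norm[\Lp p(\Eann{r}{R})]{h}^p$, together with a gradient term whose coefficient $r^{\mfdim-1}\cdot r^{p-\mfdim}=r^{p-1}$ already matches the exponent claimed in~\eqref{eqn:euc-trace}. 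The remaining—and main—point requiring attention is the comparison of the bulk coefficient $r^{\mfdim-1}/(R^\mfdim-r^\mfdim)$ with $r^{\mfdim-1}R^{-\mfdim}$; the two are equivalent up to a constant whenever $r$ is bounded away from $R$, since for any separation $r\le cR$ with $c<1$ one has $R^\mfdim-r^\mfdim\ge(1-c^\mfdim)R^\mfdim$. This is the regime in which Lemma~\ref{lem:euc-trace} is actually invoked when proving Proposition~\ref{prop:trace-with-scaling-annuli}, the inner radius there being $\littleo(R)$ by~\eqref{eqn:radius-asymp}; otherwise the argument is a direct polar-coordinate computation.
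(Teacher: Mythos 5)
Your argument is essentially the same as the paper's, just presented in a slightly different order. The paper introduces the radially constant function $\tilde{h}(\rho,\theta)=h(r,\theta)$ and the remainder $H=h-\tilde h$; the lower bound $\norm{\tilde h}_{\Lp p(\Eann{r}{R})}\gtrsim$ (trace norm) is exactly your step of multiplying the pointwise FTC inequality by $s^{\mfdim-1}$ and integrating in $s$, and the estimate for $\norm{H}$ is exactly your H\"older argument splitting $\dee\rho = \rho^{(\mfdim-1)/p}\cdot\rho^{-(\mfdim-1)/p}\,\dee\rho$, with $p<\mfdim$ entering in the integrability of $\rho^{-(\mfdim-1)/(p-1)}$ at the inner endpoint. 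So the mathematical content is identical; your phrasing via averaging over $s$ and the paper's phrasing via the radially constant comparator are two descriptions of the same manoeuvre.

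Your closing observation about the bulk coefficient is well taken and actually flags a small inaccuracy in the lemma as stated. The coefficient that the argument naturally produces is $\mfdim\,r^{\mfdim-1}/(R^\mfdim-r^\mfdim)$, not $r^{\mfdim-1}R^{-\mfdim}$. These are comparable only when $r$ is bounded away from $R$; testing \eqref{eqn:euc-trace} on the constant function $h\equiv1$ shows the inequality fails as $r/R\to1$ with the coefficient as written. The paper's own computation papers over this with the annotation claiming $\frac{1}{\mfdim}\bigl(R^\mfdim-r^\mfdim\bigr)\ge R^\mfdim/\mfdim$, which is false for $r>0$. As you note, this is harmless for the paper: the lemma is only invoked in Proposition~\ref{prop:trace-with-scaling-annuli} with $r=\radius$, $R=\Radius=\half\eps$, where \eqref{eqn:radius-asymp} gives $r=\littleo(R)$. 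A clean fix is to either state the conclusion with $(R^\mfdim-r^\mfdim)^{-1}$ in place of $R^{-\mfdim}$, or to add a hypothesis such as $r\le\half R$ to the lemma statement; neither change affects anything downstream.
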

The proof is similar to that of \cite[Lemma~6.4]{gkl}. We compare
Lemma~\ref{lem:euc-trace} and Proposition
\ref{prop:trace-with-scaling} also to
\cite[Lemma~3.3]{girouard-lagace}, which is the analogous statement
for the case~$p=2$.
\begin{proof}[Proof of Lemma~\ref{lem:euc-trace}]
  By density it suffices to prove inequality~\eqref{eqn:euc-trace} for
  smooth $h:\Eball{R}\to\R$. Working in spherical co-ordinates, we
  define the radially constant function
  $\tilde{h}(\rho,\theta)=h(r,\theta)$ for all $\rho>0$ and
  $\theta\in\sphere[\mfdim-1]$, and define $H=h-\tilde{h}$. A~direct
  computation gives us that
  \begin{equation}
    \begin{aligned}
      \norm[\Lp p(\Eann{r}{R})]{\tilde{h}}^p
      &=
      \int_{\sphere[\mfdim-1]}\int_r^R
      \abs{\tilde{h}(\rho,\theta)}^p
      \rho^{\mfdim-1}
      \,\dee{\rho}
      \,\dee{\theta}\\
      &=
      \int_{\sphere[\mfdim-1]}
      h(r,\theta)^p
      \underbrace{
        \frac{1}{\mfdim}\left(R^\mfdim-r^\mfdim\right)
      }_{\ge R^\mfdim/\mfdim}
      \,\dee{\theta}\\
      &\ge
      \frac{R^\mfdim r^{1-\mfdim}}{\mfdim}
      \underbrace{
        \int_{\sphere[\mfdim-1]}
        h(r,\theta)^p
        r^{\mfdim-1}
        \,\dee{\theta}
      }_{=\norm[\Lp p(\bd{\Eball{r}})]{h}^p}.
    \end{aligned}
  \end{equation}
  From this, it follows that
  \begin{equation}
    \label{eqn:euc-trace-1}
    \begin{aligned}
      \norm[\Lp p(\bd{\Eball{r}})]{h}^p
      &\le
      \mfdim R^{-\mfdim}r^{\mfdim-1}\norm[\Lp p(\Eann{r}{R})]{\tilde{h}}^p\\
      &=
      \mfdim R^{-\mfdim}r^{\mfdim-1}\norm[\Lp p(\Eann{r}{R})]{h-H}^p\\     
      &\le
      C_{p,\mfdim}
      R^{-\mfdim}r^{\mfdim-1}\left(
      \norm[\Lp p(\Eann{r}{R})]{h}^p
      +
      \norm[\Lp p(\Eann{r}{R})]{H}^p
      \right).
    \end{aligned}
  \end{equation}
  By construction $H$ vanishes on $\bd{\Eball{r}}$ and
  $\pd{r}H=\pd{r}h$, so using the fundamental theorem of calculus and
  then applying H\"older's inequality with exponents $p$ and
  $\frac{p}{p-1}$, we have
  \begin{equation}
    \label{eqn:euc-trace-2}
    \begin{aligned}
      \abs{H(\rho,\theta)}^p
      &\le
      \bigg(
      \int_r^\rho\abs{\pd{s}h(s,\theta)}
      \,\dee{s}
      \bigg)^p\\
      &\le
      \left(
      \norm[{\Lp p([r,\rho])}]{s^{\frac{\mfdim-1}{p}}\pd{s}h}^{\frac{1}{p}}
      \norm[{\Lp{\frac{p}{p-1}}([r,\rho])}]{s^{\frac{1-\mfdim}{p}}}^{\frac{p-1}{p}}
      \right)^p\\
      &=
      \left(
      \int_r^\rho\abs{\pd{s}h(s,\theta)}^ps^{\mfdim-1}
      \,\dee{s}
      \right)
      \left(
      \int_r^\rho s^{\frac{1-\mfdim}{p-1}}
      \,\dee{s}
      \right)^{p-1}\\
      &=
      \left(
      \int_r^\rho\abs{\pd{s}h(s,\theta)}^ps^{\mfdim-1}
      \,\dee{s}
      \right)
      \Bigg(
      \frac{p-1}{\mfdim-p}
      \Big(
      \underbrace{r^{\frac{p-\mfdim}{p-1}}-\rho^{\frac{p-\mfdim}{p-1}}}_{\le r^{\frac{p-\mfdim}{p-1}}}
      \Big)
      \Bigg)^{p-1}\\
      &\le
      C_{p,\mfdim}
      \left(
      \int_r^\rho\abs{\pd{s}h(s,\theta)}^ps^{\mfdim-1}
      \,\dee{s}
      \right)
      r^{p-\mfdim}
    \end{aligned}
  \end{equation}
  The estimate on the final line depends on the hypothesis that
  $1<p<d$, which determines the sign of the
  exponent~$\frac{p-\mfdim}{p-1}$.

  Then, integrating both sides of this inequality over $\Eann{r}{R}$,
  we recover a term that is bounded by the $\Lp p$-norm of $\grad[] h$
  in~$\Eann{r}{R}$:
  \begin{equation}
    \label{eqn:euc-trace-3}
    \begin{aligned}
      \norm[\Lp p(\Eann{r}{R})]{H}^p
      &=
      \int_r^R
      \int_{\sphere[\mfdim-1]}
      \abs{H(\rho,\theta)}^p
      \rho^{\mfdim-1}
      \,\dee{\theta}
      \,\dee{\rho}\\
      &\le
      \int_r^R
      \int_{\sphere[\mfdim-1]}
      C_{p,\mfdim}
      \left(
      \int_r^\rho\abs{\pd{s}h(s,\theta)}^ps^{\mfdim-1}
      \,\dee{s}
      \right)
      r^{p-\mfdim}
      \rho^{\mfdim-1}
      \,\dee{\theta}
      \,\dee{\rho}\\      
      &\le
      C_{p,\mfdim}
      \norm[\Lp p(\Eann{r}{R})]{\pd{s}h}^p
      \int_r^R
      r^{p-\mfdim}
      \rho^{\mfdim-1}
      \,\dee{\rho}\\
      &\le
      C_{p,\mfdim}
      r^{p-\mfdim}R^\mfdim
      \norm[\Lp p(\Eann{r}{R})]{\grad[] h}^p      
    \end{aligned}.
  \end{equation}
  Inserting~\eqref{eqn:euc-trace-2} and~\eqref{eqn:euc-trace-3} into
 ~\eqref{eqn:euc-trace-1} yields the desired inequality
 ~\eqref{eqn:euc-trace}.
\end{proof}
The proof of Proposition~\ref{prop:trace-with-scaling-annuli} now
amounts to passing into normal co-ordinates and applying
Lemma~\ref{lem:euc-trace}, taking into consideration the scaling of
the metric.
\begin{proof}[Proof of Proposition~\ref{prop:trace-with-scaling-annuli}]
  Again by density, it suffices by to achieve the inequality for smooth
  functions. By hypothesis, $\lim_{\eps\tendsto0}\Rrate=0$, so there is
  $\eps_0>0$ such that, for all $\eps<\eps_0$, $\Rrate(\eps)$ is no
  greater than the injectivity radius of $\mfd$. In particular, there is
  a chart for geodesic polar co-ordinates
  $\chart:[0,\Rrate(\eps_0))\times\sphere[\mfdim-1]\to\ball{\Rrate(\eps_0)}{\site}\subseteq
    M$. Let $f$ be a smooth function on $\ann{\site}{\rrate}{\Rrate}$.
    We compute:
    \begin{equation}
      \label{eqn:trace-scale-lhs}
      \begin{aligned}
        \norm[\Lp p(\bd{\ball{\rrate}{\site}},{\area[\gee_\scale]})]{f}^p
        &=
        \scale^{\mfdim-1}(1+\bigO[\mfd](\eps))
        \int_{\sphere[\mfdim-1]}\abs{f\circ\chart(\rrate,\theta)}^p
        \,\dee{\theta}\\
        &\asymp_{\mfd}
        \scale^{\mfdim-1}
        \norm[\Lp p(\bd{\Eball{\rrate}})]{\pullb{\chart}f}^p.\\
      \end{aligned}
    \end{equation}
    Similar computations give us also that
    \begin{equation}
      \label{eqn:trace-scale-rhs}
      \begin{aligned}
        \norm[\Lp p({\ball{\Rrate}{\site}},{\volume[\gee_\scale]})]{f}^p
        &\asymp_{\mfd}
        \scale^{\mfdim}\norm[\Lp p(\Eball{\Rrate})]{\pullb{\chart}f}^p
        \\
        \norm[\Lp p({\ball{\Rrate}{\site}},{\volume[\gee_\scale]})]{\grad f}^p
        &\asymp_{\mfd}
        \scale^{\mfdim-p}\norm[\Lp p(\Eball{\Rrate})]{\grad[] (\pullb{\chart}f)}^p.
      \end{aligned}
    \end{equation}
    Chaining together~\eqref{eqn:trace-scale-lhs},~\eqref{eqn:euc-trace},
    and~\eqref{eqn:trace-scale-rhs}, then recalling the definition of the
    $\W1p$-norm, we have that
    \begin{equation}
      \label{eqn:trace-scale-ineq}
      \begin{aligned}
        \norm[\Lp p(\bd\hole,{\area[\gee_\scale]})]{f}^p
        &\asymp_{\mfd}
        \scale^{\mfdim-1}
        \norm[\Lp p(\bd{\Eball{\rrate}})]{\pullb{\chart}f}^p\\
        &\asymp_{\mfd}
        C_{p,d}
        \scale^{\mfdim-1}
        \left(
        \rrate^{\mfdim-1}\Rrate^{-\mfdim}\norm[\Lp p(\Eann{\rrate}{\Rrate})]{\pullb{\chart}f}^p
        +
        \rrate^{p-1}
        \norm[\Lp p(\Eann{\rrate}{\Rrate})]{\grad[](\pullb{\chart}f)}^p
        \right)\\
        &\asymp_{\mfd,p}
        \scale^{\mfdim-1}
        \left(
        \rrate^{\mfdim-1}\Rrate^{-\mfdim}
        \scale^{-\mfdim}
        \norm[\Lp p(\ann{\site}{\rrate}{\Rrate})]{f}^p
        +
        \rrate^{p-1}
        \scale^{p-\mfdim}
        \norm[\Lp p(\ann{\site}{\rrate}{\Rrate})]{\grad f}^p
        \right)\\
        &\asymp_{\mfd,p}
        \scale\inv\rrate^{\mfdim-1}\Rrate^{-\mfdim}
        \norm[\Lp p(\ann{\site}{\rrate}{\Rrate})]{f}^p
        +
        \scale^{p-1}\rrate^{p-1}
        \norm[\Lp p(\ann{\site}{\rrate}{\Rrate})]{\grad f}^p\\    
        &\lesssim_{\mfd,p}
        \max\left\{\scale\inv\rrate^{\mfdim-1}\Rrate^{-\mfdim},\scale^{p-1}\rrate^{p-1}\right\}
        \norm[\W1p(\ann{\site}{\rrate}{\Rrate},{\volume[\gee_\scale]})]{f}^p
      \end{aligned}
    \end{equation}
    which gives us the desired inequality
   ~\eqref{eqn:trace-with-scaling-annuli}.
\end{proof}
\begin{rmk}
Proposition~\ref{prop:trace-with-scaling} is stated in the setting of
a single Voronoi cell. However, note that the estimate
\eqref{eqn:trace-with-scaling} is uniform over $\site\in\psites$; this
is necessary for its use in the proof of
Proposition~\ref{prop:measures-converge}.
\end{rmk}
The next two results are, or follow readily from, results that are
proven elsewhere. We recall them to bring the statements into the form
in which they are used in the present work.

The first of these is the existence of family of maps that extend
functions on $\dome$ over the holes $\holes$ to produce functions on
$\mfd$ in such a way that remains bounded in the limit. This follows
quickly, for example, from \cite[Lemma~3.4]{girouard-lagace}, where it
is shown that one can extend a function on a geodesic annulus in this
way over the ball bounded by its inner radius, by means of the
harmonic extension. From this we deduce
\begin{lem}[Uniformly bounded extension over holes]
  \label{lem:extension-operator}
  There exist a family of continuous linear maps
  $\gext:\Wz1p(\dome)\to\Wz1p(\mfd)$ such that
  \begin{equation}
    (\gext f)\at{\dome}=f,
  \end{equation}
  and such that there exists $\eps_0>0$,
  $C_{\mfd}>0$ so that for all $0<\eps<\eps_0$,
  $\norm{\gext}\le C_{\mfd}$.
\end{lem}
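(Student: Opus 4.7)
The plan is to build $\gext$ piecewise: set it equal to the identity on $\dome$ and, on each hole $\hole$, replace $f$ by a $\W1p$-extension of its values on a surrounding annulus. The main technical point is to construct a single-hole extension whose operator norm is uniform in $\eps$ and in the choice of $\site\in\psites$, which we achieve by rescaling to unit size and invoking the classical Sobolev extension theorem on a Lipschitz domain.

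First I would fix, for each $\site\in\psites$, the annulus $\ann{\site}{\radius}{2\radius}$. By~\eqref{eqn:radius-asymp} and the hypothesis $\bdconst\inv\eps\tendsto0$, we have $\radius=\littleo[\mfd,\Vee](\eps)$, so for $\eps$ sufficiently small $2\radius<\half\eps$ and hence $\ball{\site}{2\radius}\subseteq\cell$. In particular the ball lies well within the injectivity radius, so there is a geodesic normal chart $\Psii$ identifying $\ball{\site}{2\radius}$ with $\Eball{2\radius}\subseteq\R^\mfdim$, and the pulled-back metric differs from the Euclidean one by $1+\bigO[\mfd](\eps)$. Rescaling by $1/\radius$ identifies $\ann{\site}{\radius}{2\radius}$ with the Euclidean annulus $\Eann{1}{2}$ and $\hole$ with $\Eball{1}$.

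Next I would invoke the classical extension theorem for the Lipschitz domain $\Eann{1}{2}\subseteq\Eball{2}$ to obtain a bounded linear operator $E\colon\W1p(\Eann{1}{2})\to\W1p(\Eball{2})$ with $(Eg)\rvert_{\Eann{1}{2}}=g$ and operator norm bounded by a constant depending only on $p$ and $\mfdim$. Pulling $E$ back through the rescaled normal chart and using the scaling $\norm[\Lp p]{g}\propto\radius^{\mfdim/p}$, $\norm[\Lp p]{\grad g}\propto\radius^{(\mfdim-p)/p}$, together with the metric distortion estimate, yields a site-wise extension $T_\site\colon\W1p(\ann{\site}{\radius}{2\radius})\to\W1p(\ball{\site}{2\radius})$ whose norm is bounded by some $C_{\mfd,p}$ uniformly in $\eps$ and $\site$. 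The scaling analysis here is the only nontrivial point, and it is routine.

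Finally, I would assemble $\gext$ globally by the formula
\begin{equation}
  (\gext f)(x)=\begin{cases}f(x)&x\in\dome,\\T_\site(f\rvert_{\ann{\site}{\radius}{2\radius}})(x)&x\in\hole,\ \site\in\psites.\end{cases}
\end{equation}
The two pieces share a common trace on $\bd\hole$, so $\gext f\in\W1p(\mfd)$. Because the closures of the balls $\ball{\site}{2\radius}$ are pairwise disjoint (they lie in distinct Voronoi cells once $\eps$ is small), the global $\W1p$ norm splits as a sum, and the site-wise uniform bound gives
\begin{equation}
  \norm[\W1p(\mfd)]{\gext f}^p\le\norm[\W1p(\dome)]{f}^p+C_{\mfd,p}\sum_{\site\in\psites}\norm[\W1p(\ann{\site}{\radius}{2\radius})]{f}^p\le C_\mfd\norm[\W1p(\dome)]{f}^p.
\end{equation}
Since every $\hole$ is compactly contained in the interior of $\mfd$, the construction does not alter the trace on $\zbd\subseteq\bd\mfd$, so $\gext$ maps $\Wz1p(\dome)$ into $\Wz1p(\mfd)$ as required. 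The hardest step is verifying the uniform bound on $T_\site$; everything else is bookkeeping.
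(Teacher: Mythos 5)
Your overall architecture (site-wise extension over each hole, assembled globally by disjointness of the annuli) is fine, and the final assembly step is correct. The paper's proof is a citation to the harmonic-extension construction of Girouard--Lagac\'e and the Euclidean computation of Anne--Post, so yours is a genuinely independent route. However, the step you dismiss as ``routine'' is precisely where the argument breaks down as written.

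Here is the problem. Write $r=\radius$, pull back to a Euclidean annulus $\Eann{r}{2r}$, rescale by $1/r$, apply the fixed extension operator $E\colon\W1p(\Eann{1}{2})\to\W1p(\Eball{2})$, and rescale back. Tracking the two scalings you record gives
\begin{equation}
  \norm[\Lp p(\Eball{2r})]{F}^p + r^p\norm[\Lp p(\Eball{2r})]{\grad F}^p
  \le
  C_{p,\mfdim}\Big(
  \norm[\Lp p(\Eann{r}{2r})]{f}^p + r^p\norm[\Lp p(\Eann{r}{2r})]{\grad f}^p
  \Big),
\end{equation}
where $F$ is the rescaled extension. From this the $\Lp p$-norm of $F$ is controlled, but the gradient estimate reads $\norm[\Lp p]{\grad F}^p\lesssim r^{-p}\norm[\Lp p]{f}^p + \norm[\Lp p]{\grad f}^p$, and the factor $r^{-p}$ is unbounded as $\eps\tendsto0$. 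So the naive rescale-and-extend operator does \emph{not} have operator norm bounded uniformly in $\eps$; the heterogeneous scaling of the $\Lp p$-norm of a function versus its gradient is exactly what this construction has to overcome, and it is the whole content of the lemma.

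The standard fix is to subtract the mean before extending. Set $\bar f=\Vol(\Eann{r}{2r})\inv\int_{\Eann{r}{2r}}f$, extend $f-\bar f$ by the rescaled operator to obtain $\tilde F$, and put $T_\site f=\tilde F+\bar f$. The Poincar\'e inequality on the annulus, whose constant scales like $r$, gives $\norm[\Lp p]{f-\bar f}\lesssim r\norm[\Lp p]{\grad f}$, which exactly cancels the $r^{-p}$ above and yields $\norm[\Lp p]{\grad\tilde F}\lesssim\norm[\Lp p]{\grad f}$. The $\Lp p$-norm of the constant $\bar f$ over the small ball is bounded by $\norm[\Lp p(\Eann{r}{2r})]{f}$ by H\"older, since the volume ratio $\Vol(\Eball{2r})/\Vol(\Eann{r}{2r})$ is dimensional. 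With this modification the site-wise bound is genuinely uniform, and your assembly argument then goes through; but without it the proposal does not establish the claimed uniform bound.
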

See also \cite[Lemma~4.3]{anne-post} for a proof of an analogous
result in the Euclidean setting, where the computations are presented
in detail.

The following proposition concerns the $p$-Poincar\'e inequality on
Voronoi cells. Recall that the $p$-Poincar\'e inequality is satisfied
for a domain $\Omega\subseteq\mfd$ on a Riemannian manifold $(\mfd,g)$
if there exists a constant $\Kay(\Omega,g)$ such that for all
$f\in\W1p(\Omega)$
\begin{equation}
  \label{eqn:p-poincare}
  \int_\Omega\abs{f-m_f}^p\dv\le\Kay(\Omega,\gee)\int_\Omega\abs{\grad f}^p\dv.
\end{equation}
where $m_f=\Vol(\Omega)\inv\int_\Omega f\dv$.  The infimum of all such
$\Kay(\Omega,\gee)$ is the \emph{$p$-Poincar\'e constant}; without
introducing any ambiguity, we choose to also call this infimum
$\Kay(\Omega,\gee)$.
\begin{prop}[$p$-Poincar\'e constant on Voronoi cells with scaled metric]
  \label{prop:p-poincare-constant}
  For $1\le p\le\infty$, there exist constants $\eps_0>0$ and
  $C_{\mfd,p}>0$ such that for all $0<\eps<\eps_0$,
  \begin{equation}
    \Kay(\cell,\volume[\gee_\scale])\le C_{\mfd,p}\scale^p\eps^p.
  \end{equation}
\end{prop}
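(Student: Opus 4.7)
The plan is to first reduce to the case $\scale=1$. Under the constant scaling $\gee\mapsto\gee_\scale=\scale^2\gee$ we have $\volume[\gee_\scale]=\scale^\mfdim\volume[\gee]$ and $\abs{\grad[\gee_\scale]f}_{\gee_\scale}=\scale^{-1}\abs{\grad f}_\gee$ pointwise, while the mean $m_f$ is invariant under constant multiplication of the reference measure. Substituting these identities into~\eqref{eqn:p-poincare} yields the scaling relation
\begin{equation}
  \Kay(\cell,\volume[\gee_\scale])=\scale^p\Kay(\cell,\volume[\gee]),
\end{equation}
so it suffices to prove $\Kay(\cell,\volume[\gee])\lesssim_{\mfd,p}\eps^p$ uniformly for $\site\in\sites$ and $\eps$ sufficiently small.

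Next I would exploit the sandwich property of Voronoi cells of a maximal $\eps$-separated set, namely $\ball{\site}{\eps/2}\subseteq\cell\subseteq\ball{\site}{\eps}$: the outer inclusion follows from the maximality of $\sites$, while the inner inclusion follows from the fact that a point within $\eps/2$ of $\site$ is strictly further than $\eps/2$ from any other site by the triangle inequality and $\eps$-separation. For $\eps$ smaller than the injectivity radius of $\mfd$, pass to geodesic normal coordinates centred at $\site$ and then rescale coordinates by $\eps^{-1}$. The rescaled cell $\tilde\cell\subset\R^\mfdim$ sits between two concentric Euclidean balls of radii close to $1/2$ and~$1$, and carries a metric $\tilde\gee$ converging uniformly in $C^0$ to the Euclidean metric as $\eps\tendsto0$. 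Applying the same scaling identity once more reduces the problem to proving a \emph{uniform} Poincar\'e estimate for $\W1p$-functions on such sandwiched Euclidean domains.

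The main obstacle is this uniform Euclidean estimate. The key observation is that any Euclidean domain $D\subseteq\R^\mfdim$ with $B_r\subseteq D\subseteq B_R$ for concentric balls of comparable radii is a John domain, with John constants depending only on the ratio $R/r$: every point of $D$ is connected to the common centre by a radial segment, which after entering $B_r$ remains at distance at least $r$ from $\bd D$. By the classical Poincar\'e inequality for John domains (in the spirit of Bojarski or Hajlasz--Koskela), the $(p,p)$-Poincar\'e constant depends only on $p$, $\mfdim$, and the John ratio, hence is uniform over the whole family of such domains.

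Translating back through the rescaling and the coordinate change then yields the required bound, the $\mfd$-dependence of $C_{\mfd,p}$ entering only through the uniform $C^0$-closeness of $\tilde\gee$ to the Euclidean metric on balls of bounded radius (and through the choice of $\eps_0$ less than the injectivity radius of $\mfd$). The case $p=\infty$, if included, reduces immediately to the diameter estimate $\norm[\Lp\infty(\cell)]{f-m_f}\le 2\scale\eps\,\norm[\Lp\infty(\cell)]{\grad[\gee_\scale]f}$, using that the $\gee_\scale$-diameter of $\cell$ is bounded by $2\scale\eps$.
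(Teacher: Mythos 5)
Your reduction to the case $\scale=1$ via the scaling relation $\Kay(\cell,\volume[\gee_\scale])=\scale^p\Kay(\cell,\volume[\gee])$ is correct, and the sandwich $\ball{\site}{\eps/2}\subseteq\cell\subseteq\cl{\ball{\site}{\eps}}$ is also correctly justified. Your overall strategy (normal coordinates, rescale by $\eps^{-1}$, reduce to a uniform Euclidean Poincar\'e estimate) is a legitimate, more hands-on alternative to the paper's, which instead invokes a ready-made $(p,p)$-Poincar\'e inequality on geodesically convex subsets of manifolds with Ricci curvature bounded below (Proposition~\ref{prop:hkp}, from \cite{hkp}).

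However, there is a genuine gap at the key step. You claim that any Euclidean domain $D$ with $B_r\subseteq D\subseteq B_R$ is automatically a John domain with John constant depending only on $R/r$, justified by ``every point of $D$ is connected to the common centre by a radial segment.'' This is false: the sandwich property alone does not guarantee that the radial segment from a point of $D$ to the centre stays inside $D$. For instance, $D=B_1\setminus S$ where $S$ is a long thin spiral contained in the annulus $B_1\setminus\cl{B_{1/2}}$ still satisfies $B_{1/2}\subseteq D\subseteq B_1$, yet as the spiral lengthens the John constant degenerates, and the radial paths you describe are blocked. What rescues your argument for the domains actually at hand is that Voronoi cells are (geodesically) convex for small $\eps$, hence in particular star-shaped about $\site$, so the radial segments do stay inside and your similar-triangles estimate applies. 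This is precisely the hypothesis the paper's cited Poincar\'e result requires as well. You need to state and use this convexity (or at least star-shapedness with respect to $\site$) explicitly; without it the claimed uniform John property does not follow.
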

To prove Proposition~\ref{prop:p-poincare-constant}, we
recall~\cite[Proposition 2.2]{hkp}, whose statement we reproduce
below:
\begin{prop}
  \label{prop:hkp}
  Let $(\mfd,g)$ be a complete $\mfdim$-dimensional Riemannian manifold whose
  Ricci curvature is bounded below by $\mathrm{Ric}_g\ge-(\mfdim-1)\kappa$
  for some $\kappa\ge 0$. Then for any $p\ge 1$, geodesically convex
  domain $\Omega\subseteq \mfd$, and ${f\in\Cn{\infty}(\Omega)}$, the
  following inequality holds:
  \begin{equation}
    \int_{\ball{s}{R}\cap\Omega}\abs{f-m_f}^p\dv
    \le
    C_{n,p} R^p e^{(n-1)R\sqrt{\kappa}}
    \int_{\ball{s}{2R}\cap\Omega}\abs{\grad f}^p\dv
  \end{equation}
  for any $R>0$, $s\in \mfd$; in which
  \begin{equation}
    m_f=\Vol(\ball{s}{R}\cap\Omega)\inv\int_{\ball{s}{R}\cap\Omega} f\dv
  \end{equation}
  is the mean of $f$ on $\ball{s}{R}\cap\Omega$.
\end{prop}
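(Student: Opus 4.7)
The plan is to prove this Buser-type Poincaré inequality through the classical segment inequality combined with Bishop's volume comparison, which is the standard route from a Ricci lower bound to a quantitative Poincaré estimate. Geodesic convexity of $\Omega$ guarantees that for any $x,y\in\ball{s}{R}\cap\Omega$ there is a minimizing geodesic $\gamma_{xy}:[0,d(x,y)]\to\Omega$ joining them, which stays inside $\Omega$. A density argument reduces us to smooth $f$.

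First I would use Jensen's inequality to replace the mean by a double integral, and then apply Hölder along each geodesic segment, obtaining
\begin{equation}
\int_{\ball{s}{R}\cap\Omega}\abs{f-m_f}^p\dv\le\frac{(2R)^{p-1}}{\Vol(\ball{s}{R}\cap\Omega)}\iint\int_0^{d(x,y)}\abs{\grad f(\gamma_{xy}(t))}^p\,dt\,\dv(x)\,\dv(y),
\end{equation}
where the double integral runs over $(\ball{s}{R}\cap\Omega)^2$. The problem thus reduces to controlling this triple integral by $C_n R\,e^{(n-1)R\sqrt{\kappa}}\Vol(\ball{s}{R}\cap\Omega)\int_{\ball{s}{2R}\cap\Omega}\abs{\grad f}^p\dv$.

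The heart of the argument is a change of variables based on the exponential map. I would split the $t$-integral at the midpoint $d(x,y)/2$; on the first half, fix $x$ and write $y=\exp_x(r\theta)$ in geodesic polar coordinates, so that $z=\gamma_{xy}(t)=\exp_x(t\theta)$. Substituting $(y,t)\mapsto(z,r-t,\theta)$ introduces a Jacobian equal to the ratio of volume densities $J_x(r,\theta)/J_x(t,\theta)$ of the exponential map. Under the Ricci bound $\mathrm{Ric}_g\ge-(n-1)\kappa$, Bishop's inequality yields $J_x(r,\theta)\lesssim_n (\sinh(r\sqrt{\kappa})/\sqrt{\kappa})^{n-1}$ together with a matching lower bound for $J_x(t,\theta)$, giving
\begin{equation}
\frac{J_x(r,\theta)}{J_x(t,\theta)}\lesssim_n e^{(n-1)r\sqrt{\kappa}}\le e^{(n-1)(2R)\sqrt{\kappa}}.
\end{equation}
The triangle inequality ensures $d(s,z)\le d(s,x)+t\le R+d(x,y)/2\le 2R$ for $t\le d(x,y)/2$, so $z\in\ball{s}{2R}$; convexity of $\Omega$ keeps $z\in\Omega$. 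The second half of the $t$-integral is handled symmetrically by exchanging the roles of $x$ and $y$.

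The main obstacle is the measure-theoretic bookkeeping of the change of variables past the cut locus of $x$; this is handled by restricting to points where the exponential map is a diffeomorphism, which carry full measure in the set of pairs $(x,y)$ joined by a unique minimizing geodesic, and then invoking Bishop--Gromov volume comparison to absorb the factor $\Vol(\ball{s}{R}\cap\Omega)^{-1}$ against the Jacobian terms without blowing up. Once these geometric comparisons are assembled, collecting the Hölder factor $(2R)^{p-1}$ together with the extra $R$ arising from integrating along the segment yields the asserted constant $C_{n,p}R^p e^{(n-1)R\sqrt{\kappa}}$ with the correct exponential dependence on $\kappa$.
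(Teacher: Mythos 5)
The paper does not prove this proposition at all: Proposition~\ref{prop:hkp} is reproduced verbatim from an external reference (cited as \cite[Proposition 2.2]{hkp}) and used as a black box. So there is no paper proof to compare against; what you have produced is a reconstruction of the standard Cheeger--Colding segment inequality / Buser argument, which is indeed the right route to such a local Poincar\'e inequality under a Ricci lower bound and a convexity hypothesis. The skeleton (Jensen to pass to a double integral, H\"older along segments to extract $d(x,y)^{p-1}$, then the exponential-map change of variables controlled by Bishop volume comparison, with $z$ landing in $\ball{s}{2R}\cap\Omega$ by the triangle inequality and geodesic convexity) is correct.

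There is, however, a genuine slip in the way you set up the midpoint split. On the half $t\le d(x,y)/2$ the running point $z=\gamma_{xy}(t)$ is close to $x$, so if you fix $x$ and pass to polar coordinates about $x$, the density ratio $J_x(r,\theta)/J_x(t,\theta)$ you need to bound has $t$ possibly arbitrarily small relative to $r$; since $J_x(t,\theta)\sim t^{\mfdim-1}$ as $t\to0$, this ratio is unbounded and the claimed estimate $J_x(r,\theta)/J_x(t,\theta)\lesssim_n e^{(n-1)r\sqrt\kappa}$ fails. The correct prescription is the opposite one: on the half where $z$ lies closer to $x$ you must fix the \emph{far} endpoint $y$ (and integrate over $x$), and on the half $t\ge d(x,y)/2$ you fix $x$. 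With $t\in[r/2,r]$ the Bishop--Gromov ratio gives
\begin{equation}
  \frac{J_x(r,\theta)}{J_x(t,\theta)}
  \le\left(\frac{\sinh(\sqrt\kappa\,r)}{\sinh(\sqrt\kappa\,r/2)}\right)^{\mfdim-1}
  =\left(2\cosh(\sqrt\kappa\,r/2)\right)^{\mfdim-1}
  \le 2^{\mfdim-1}e^{(\mfdim-1)\sqrt\kappa\,R}
\end{equation}
for $r\le 2R$, which is exactly the exponential factor in the statement. Note also that your bound as written loses a factor of $2$ in the exponent ($e^{(n-1)(2R)\sqrt\kappa}$ rather than $e^{(n-1)R\sqrt\kappa}$); the missing ingredient is precisely the observation that $\sinh a/\sinh(a/2)=2\cosh(a/2)\le 2e^{a/2}$, i.e., the lower density sits at radius at least $r/2$, not at radius $0$. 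Once the roles of the two endpoints are swapped in your split, the rest of the argument goes through and recovers the stated constant.
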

\begin{proof}[Proof of Proposition~\ref{prop:p-poincare-constant}]
  From the characterisation~\eqref{eqn:p-poincare} of the $p$-Poincar\'e
  constant it is evident that
  $\Kay(\cell,\volume)=\scale^p\Kay(\cell,\volume[\gee_\scale])$, so to
  prove Proposition~\ref{prop:p-poincare-constant} it suffices to prove
  that $\Kay(\cell,\volume)\lesssim_{\mfd,p}\eps^p$ as~$\eps\tendsto0$.
  
  Since $\mfd$ is compact, the hypothesis on bounded Ricci curvature
  in Proposition~\ref{prop:hkp} is satisfied for some $\kappa\ge
  0$. By construction, the Voronoi cell $\cell$ is contained within
  the geodesic ball of radius $R=3\eps$ centred at
  $\site\in\mfd$. Finally, for all small enough $\eps$ we have that
  $\cell$ is geodesically convex.  The desired result now follows from
  invoking Proposition ~\ref{prop:hkp} with the choices $\Omega=\cell$
  and $R=3\eps$.
\end{proof}
Let us now define the measure $\mues$ on $\mfd$ to be the push-forward
of the boundary measure $\area[\gee_\scale]$ on $\bd\holes$ with
respect to the metric $\gee_\scale$, weighted by $\bdparame$; that is,
%
\begin{equation}
  \label{eqn:mues-dfn}
  \mues=\bdparame\area[\gee_\scale].
\end{equation}
When $\scale=1$, we write simply $\mue$ in place of~$\mues$.

The final results in this section concern the properties of the
measures $\mue$. Recall from the introduction that we expect $\mue$ to
converge to $\Vee\volume$ in the limit $\eps\tendsto0$; this is made
precise in the several propositions that follow.
\begin{prop}[Estimate for integrals on cells]
  \label{prop:cell-integral-estimate}
  For all $\delta>0$, there exists $\eps_0>0$ such that for all
  $0<\eps<\eps_0$ and for all $\site\in\psites$ we have that
  \begin{equation}
    \label{eqn:cell-integral-estimate}
    \bigabs{1-
      \frac{\int_{\bd\hole}\bdparame\dA}{\int_{\cell}\Vee\dv}}<\delta.
  \end{equation}
\end{prop}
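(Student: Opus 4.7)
The plan is straightforward once one unpacks the definitions. By the definition~\eqref{eqn:radius-dfn} of $\radius$ together with~\eqref{eqn:bdparame-dfn}, the numerator on the left-hand side of~\eqref{eqn:cell-integral-estimate} simplifies immediately:
\begin{equation}
  \int_{\bd\hole}\bdparame\dA
  =
  \bdconst\,\sign(\Vee(\site))\cdot\Area{\bd\hole}
  =
  \sign(\Vee(\site))\cdot\abs{\Vee(\site)}\Vol(\cell)
  =
  \Vee(\site)\Vol(\cell).
\end{equation}
So the proposition reduces to showing that, uniformly in $\site\in\psites$,
\begin{equation}
  \frac{\Vee(\site)\Vol(\cell)}{\int_\cell\Vee\dv}
  \tendsto1
  \qquad\text{as }\eps\tendsto0,
\end{equation}
i.e.\ that $\Vee$ is approximately constant on each Voronoi cell associated with a site in $\psites$, in a precise sense.

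To establish this, I would exploit two ingredients. First, since $\Vee\in\Cn1(\mfd)$ and $\mfd$ is compact, $\Vee$ is Lipschitz with some constant $L>0$; and since each Voronoi cell $\cell$ is contained in a metric ball of radius $\lesssim\eps$ centred at $\site$, we obtain
\begin{equation}
  \bigabs{\int_\cell\Vee\dv-\Vee(\site)\Vol(\cell)}
  \le
  \int_\cell\abs{\Vee(x)-\Vee(\site)}\dv
  \lesssim_{\mfd,\Vee}
  \eps\cdot\Vol(\cell).
\end{equation}
Second, the definition~\eqref{eqn:psites-dfn} of $\psites$ guarantees that $\abs{\Vee}>\eps^{\half}$ everywhere on $\cell$, and in particular $\Vee$ does not change sign on $\cell$, so
\begin{equation}
  \bigabs{\int_\cell\Vee\dv}
  =
  \int_\cell\abs{\Vee}\dv
  \ge
  \eps^{\half}\Vol(\cell).
\end{equation}
Dividing the first estimate by the second gives a bound of the form $C_{\mfd,\Vee}\eps^{\half}$ for $\bigabs{1-\Vee(\site)\Vol(\cell)/\int_\cell\Vee\dv}$, uniformly over $\site\in\psites$, which tends to zero as $\eps\tendsto0$. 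Choosing $\eps_0$ so that $C_{\mfd,\Vee}\eps_0^{\half}<\delta$ yields the claim.

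There is no serious obstacle; the only subtle point is the interplay of the two exponents. The bound $\abs{\Vee}>\eps^{\half}$ provides a lower bound on the denominator, while the Lipschitz estimate yields an $\bigO(\eps)$ bound on the numerator error, and the quotient tends to zero precisely because $\half<1$. This also explains the remark following~\eqref{eqn:psites-dfn}: any exponent $\alpha\in(0,1)$ in place of $\half$ would yield a bound $\bigO(\eps^{1-\alpha})$ and still suffice.
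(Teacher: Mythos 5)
Your proof is correct and follows essentially the same route as the paper: reduce via~\eqref{eqn:radius-dfn} and~\eqref{eqn:bdparame-dfn} to comparing $\Vee(\site)\Vol(\cell)$ with $\int_\cell\Vee\dv$, bound the numerator error by $\bigO(\eps)\Vol(\cell)$ using $\Vee\in\Cn1$, and bound the denominator below by $\eps^{\half}\Vol(\cell)$ from the definition of $\psites$. You make explicit the (correct, and silently used in the paper) observation that $\abs\Vee>\eps^{\half}$ on the connected cell $\cell$ forces $\Vee$ to have constant sign there, which is what justifies the lower bound on $\bigabs{\int_\cell\Vee\dv}$.
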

\begin{proof}
  For $\site\in\psites$, we rewrite
  \begin{equation}
    \label{eqn:Vf-continuity}
    1-\frac{\int_{\bd\hole}\bdparame\dA}{\int_{\cell}\Vee\dv}
    =
    \frac{\int_{\cell}\Vee-\Vee(s)\dv}{\int_{\cell}\Vee\dv}+
    \frac{\int_{\cell}\Vee(s)\dv-\int_{\bd\hole}\bdparame\dA}{
      \int_{\cell}\Vee\dv}
  \end{equation}
  and see that the second term on the right-hand side vanishes, by
 ~\eqref{eqn:radius-dfn}. By construction of $\psites$ we have that
  $\abs{\Vee(x)}>\eps^\half$ for all $x\in\cell$, so
  \begin{equation}
    \bigabs{\int_{\cell}\Vee\dv}\ge\eps^\half\Vol(\cell)\asymp_{\mfd,\Vee}\eps^{\mfdim+\half}.
  \end{equation}
  On the other hand,
  \begin{equation}
    \bigabs{\int_{\cell}\Vee-\Vee(s)\dv}\lesssim_{\mfd,\Vee}\eps^{\mfdim+1}.
  \end{equation}
  So the ratio of these two quantities vanishes in the limit
  $\eps\tendsto 0$. Substituting this into~\eqref{eqn:Vf-continuity},
  we conclude that
  \begin{equation}
    \frac{\int_{\bd\hole}\bdparame\dA}{\int_{\cell}\Vee\dv}\tendsto1
    \quad\text{as}\quad
    \eps\tendsto0
  \end{equation}
  which is the desired result.
\end{proof}
Note that \eqref{eqn:cell-integral-estimate} is uniform over
$\site\in\psites$. In particular, by aggregating this estimate across
all $\site\in\psites$, we deduce the following:
\begin{cor}[Weak-$*$ convergence of measures]
  \label{cor:measures-weak-converge}
  The measures $\mue$ converge in the weak-$*$ sense to~$\Vee\volume$.
\end{cor}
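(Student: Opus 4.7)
The plan is to test weak-$*$ convergence against an arbitrary $\varphi\in\Cn0(\mfd)$ and show that $\int_\mfd\varphi\,\dee\mue\tendsto\int_\mfd\varphi\Vee\dv$ as $\eps\tendsto0$. Since $\mue$ is concentrated on $\bd\holes\subseteq\pdom$, we may write $\int_\mfd\varphi\,\dee\mue=\sum_{\site\in\psites}\int_{\bd\hole}\varphi\bdparame\dA$, while the right-hand side decomposes according to the partition $\mfd=\bdom\cup\odom\cup\pdom$. The task therefore reduces to controlling the volume integral over $\bdom\cup\odom$, and then matching the integrals on each remaining perforated cell.

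First I would show that the contributions from $\bdom$ and $\odom$ vanish. Every Voronoi cell has diameter $\bigO[\mfd](\eps)$, and the cells in $\bdom$ all meet $\bd\mfd$, so $\bdom$ lies in an $\bigO[\mfd](\eps)$-tubular neighbourhood of $\bd\mfd$ and $\Vol(\bdom)\tendsto0$. For $\odom$, the defining property of $\osites$ (namely, that each cell contains a point where $\abs{\Vee}\le\eps^{\half}$) together with uniform continuity of $\Vee\in\Cn1(\mfd)$ and the diameter bound yields $\abs{\Vee}\lesssim_{\mfd,\Vee}\eps^{\half}$ uniformly on $\odom$. Hence $\bigabs{\int_{\bdom\cup\odom}\varphi\Vee\dv}\lesssim_{\mfd,\Vee}\norm[\Lp\infty(\mfd)]{\varphi}(\Vol(\bdom)+\eps^{\half}\Vol(\mfd))\tendsto0$.

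For the main contribution, on each $\site\in\psites$ I would add and subtract $\varphi(\site)$ to decompose
\begin{equation*}
\int_{\bd\hole}\varphi\bdparame\dA-\int_{\cell}\varphi\Vee\dv
=\varphi(\site)\Bigl(\int_{\bd\hole}\bdparame\dA-\int_{\cell}\Vee\dv\Bigr)
+\int_{\bd\hole}(\varphi-\varphi(\site))\bdparame\dA
-\int_{\cell}(\varphi-\varphi(\site))\Vee\dv.
\end{equation*}
Proposition~\ref{prop:cell-integral-estimate} bounds the first bracketed difference by $\delta\bigabs{\int_{\cell}\Vee\dv}$ uniformly in $\site$, so summed over $\psites$ this contributes at most $\delta\norm[\Lp\infty(\mfd)]{\varphi}\norm[\Lp1(\mfd)]{\Vee}$. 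For the remaining two terms, the uniform modulus of continuity $\omega_\varphi(\eps)\tendsto0$ of $\varphi$ on the compact manifold $\mfd$ controls $\abs{\varphi-\varphi(\site)}$ on each $\cell$; combined with the identity $\int_{\bd\hole}\abs{\bdparame}\dA=\abs{\Vee(\site)}\Vol(\cell)$ from~\eqref{eqn:radius-dfn}, these sum to a total error of order $\omega_\varphi(\eps)\norm[\Lp1(\mfd)]{\Vee}$.

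No serious obstacle arises: Proposition~\ref{prop:cell-integral-estimate} was formulated to be uniform in $\site\in\psites$, which is precisely what permits aggregation over all perforated cells, and the residual approximation is handled by uniform continuity of the test function. Letting $\eps\tendsto0$ and then $\delta\tendsto0$ completes the argument.
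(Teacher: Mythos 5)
Your proof is correct, and it is precisely the ``direct proof'' that the paper alludes to but deliberately chooses not to write out: the paper states that Corollary~\ref{cor:measures-weak-converge} could be ``deduced by aggregating this estimate across all $\site\in\psites$,'' which is your argument, but then says ``Rather than present a direct proof of Corollary~\ref{cor:measures-weak-converge}, we obtain it as a consequence of the following proposition'' and derives the corollary instead from the stronger convergence $\mue\tendsto\Vee\volume$ in $\dual{\W1p(\mfd)}$ established in Proposition~\ref{prop:measures-converge}. The two routes differ meaningfully. Yours is elementary and self-contained: once Proposition~\ref{prop:cell-integral-estimate} is in hand, you only need the partition $\mfd=\bdom\cup\odom\cup\pdom$, volume estimates on the first two pieces, and uniform continuity of the test function and of $\Vee$; no trace bounds, no $p$-Poincar\'e constant, no amplification argument. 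The paper's route is essentially ``free'' because Proposition~\ref{prop:measures-converge} is needed for the main theorems anyway, but it does carry a hidden density step (a $\Cn0$ test function need not lie in $\W1{p'}(\mfd)$, so one must pass through the uniform bound on total variations of $\mue$ and approximate). A couple of small points you might spell out: the bound $\sum_{\site\in\psites}\bigabs{\int_{\cell}\Vee\dv}\le\norm[\Lp1(\mfd)]{\Vee}$ uses that $\Vee$ has a definite sign on each cell in $\psites$, which holds because $\abs{\Vee}>\eps^{1/2}$ on all of $\cell$ and $\Vee$ is continuous; and the sums $\sum_{\site\in\psites}\abs{\Vee(\site)}\Vol(\cell)$ appearing in your error terms are uniformly bounded by a Riemann-sum comparison with $\norm[\Lp1(\mfd)]{\Vee}$. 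Both are minor but worth recording for completeness.
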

For our purposes, it is necessary to establish this convergence of
measures in a stronger sense. Rather than present a direct proof of
Corollary~\ref{cor:measures-weak-converge}, we obtain it as a
consequence of the following proposition:
\begin{prop}[Convergence of measures in the dual of $\W1p(\mfd)$]
  \label{prop:measures-converge}
  Let $1<p<\infty$. Then, for $\bdconst:\Rpos\to\Rpos$
  satisfying~\eqref{eqn:bdconst-range},
  we have that
  \begin{equation}
    \label{eqn:measures-converge}
    \mue
    \tendsto\Vee\volume
  \end{equation}
  in $\dual{\W1p(\mfd)}$ as $\eps\tendsto0$.
\end{prop}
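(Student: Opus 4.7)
The plan is to test against an arbitrary $f \in \W1p(\mfd)$ with $\|f\|_{\W1p(\mfd)} \le 1$ and establish that $\int_\mfd f \, d(\mue - \Vee\volume)$ vanishes uniformly in $f$ as $\eps \tendsto 0$; following the remark after Definition~\ref{dfn:admissible}, it suffices to treat $\Vee \in \Cn1(\mfd)$. Decompose according to the partition $\sites = \psites \cup \osites \cup \bsites$:
\begin{equation}
  \int_\mfd f \, d(\mue - \Vee\volume) = \sum_{\site \in \psites}\Bigl(\int_{\bd\hole} f\bdparame \dA - \int_\cell f\Vee \dv\Bigr) - \sum_{\site \in \osites \cup \bsites}\int_\cell f\Vee \dv.
\end{equation}
The second sum is absorbed using $\|\Vee\|_{\Lp\infty(\odom)} \lesssim \eps^{1/2}$ (from continuity of $\Vee \in \Cn1$ combined with the defining inequality for $\psites$ and the diameter bound $\operatorname{diam}(\cell) \lesssim \eps$) together with $\Vol(\bdom) \lesssim \eps$; H\"older's inequality and the Sobolev embedding $\W1p(\mfd) \hookrightarrow \Lp p(\mfd)$ then give an $\littleo(1)\|f\|_{\W1p(\mfd)}$ bound.

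For each cell in the main sum, the identity $\int_{\bd\hole}\bdparame \dA = \Vee(\site)\Vol(\cell)$, which follows immediately from~\eqref{eqn:radius-dfn} and~\eqref{eqn:bdparame-dfn}, permits the decomposition
\begin{equation}
  \int_{\bd\hole} f\bdparame \dA - \int_\cell f\Vee \dv = \int_{\bd\hole}(f - \bar f_\cell)\bdparame \dA - \int_\cell(f - \bar f_\cell)\Vee \dv + \bar f_\cell\Bigl(\int_{\bd\hole}\bdparame \dA - \int_\cell \Vee \dv\Bigr),
\end{equation}
where $\bar f_\cell = \Vol(\cell)\inv\int_\cell f \dv$. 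Proposition~\ref{prop:cell-integral-estimate} controls the third summand uniformly, and after summing by H\"older bounds it by $\delta\|f\|_{\Lp p(\mfd)}\|\Vee\|_{\Lp{p'}(\mfd)}$ for arbitrarily small $\delta>0$. The middle summand, after Poincar\'e (Proposition~\ref{prop:p-poincare-constant} with $\scale=1$) giving $\|f-\bar f_\cell\|_{\Lp p(\cell)} \lesssim \eps\|\grad f\|_{\Lp p(\cell)}$ and H\"older aggregation, is bounded by $\eps\|\grad f\|_{\Lp p(\mfd)}\|\Vee\|_{\Lp{p'}(\mfd)}$.

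The principal summand is the boundary integral $\sum_\psites\int_{\bd\hole}(f - \bar f_\cell)\bdparame \dA$. Per cell, H\"older gives $\bigabs{\int_{\bd\hole}(f - \bar f_\cell)\bdparame \dA} \le \bdconst\Area(\bd\hole)^{1/p'}\|f - \bar f_\cell\|_{\Lp p(\bd\hole)}$, with $\bdconst\Area(\bd\hole)^{1/p'} \asymp \bdconst^{1/p}\eps^{\mfdim(p-1)/p}$ by~\eqref{eqn:radius-asymp}. Apply Proposition~\ref{prop:trace-with-scaling} with scaling $\scale \asymp \eps\inv$ chosen to balance the two summands of $\|f - \bar f_\cell\|_{\W1p(\pcell,\volume_{\gee_\scale})}^p = \scale^\mfdim\|f - \bar f_\cell\|_{\Lp p(\cell)}^p + \scale^{\mfdim-p}\|\grad f\|_{\Lp p(\cell)}^p$ after Poincar\'e; the resulting per-cell bound, aggregated over $\card\psites \lesssim \eps^{-\mfdim}$ cells by H\"older in $\ell^p$ (contributing a factor $\eps^{-\mfdim(p-1)/p}$), yields
\begin{equation}
  \bigabs{\sum_\psites\int_{\bd\hole}(f - \bar f_\cell)\bdparame \dA} \lesssim \max\Bigl\{\eps,\,\bigl(\bdconst\eps^{\mfdim(p-1)/(\mfdim-p)}\bigr)^{(\mfdim-p)/(p(\mfdim-1))}\Bigr\}\|\grad f\|_{\Lp p(\mfd)},
\end{equation}
both branches vanishing by hypothesis~\eqref{eqn:bdconst-range}.

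The main obstacle is tracking the exponents through scaling, Poincar\'e, the trace bound, and H\"older aggregation so as to identify precisely the two hypotheses in~\eqref{eqn:bdconst-range}: the first, $\bdconst\inv\eps \tendsto 0$, enters through the first branch of~\eqref{eqn:trace-with-scaling} (and is already required for the construction in Section~\ref{ssec:homogenisation-construction} to be well-defined); the second, $\bdconst\eps^{\mfdim(p-1)/(\mfdim-p)} \tendsto 0$, emerges from the balance between the $\bdconst$-dependence of the second branch of~\eqref{eqn:trace-with-scaling} and the surface-area factor $\bdconst\inv\eps^\mfdim$ that arises from $\Area(\bd\hole)$, combined with the cardinality factor $(\card\psites)^{1/p'}$. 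The role of the scaling $\scale \asymp \eps\inv$ is to convert scale-dependent per-cell estimates into a scale-invariant bound after H\"older summation.
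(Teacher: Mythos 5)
Your proposal is correct, and it follows a genuinely different route from the paper. Where the paper deals with the per-cell discrepancy $\int_{\bd\hole}f\bdparame\dA-\int_{\cell}f\Vee\dv$ by introducing, for each cell, a solution $\Psii$ to an auxiliary Neumann-type problem via Lemma~\ref{lem:psi} (an instance of~\cite[Lemma~3.16]{gkl}), and then rewriting that discrepancy as $\int_{\cell}\grad f\cdot\grad\Psii\dv$ so the convergence reduces to bounding $\norm[\Lp{p'}]{\grad\Psii}$, you instead subtract the cell average $\bar f_\cell$ and split directly into three pieces: the boundary term for $f-\bar f_\cell$, the interior term for $f-\bar f_\cell$, and $\bar f_\cell$ times the mass mismatch handled by Proposition~\ref{prop:cell-integral-estimate}. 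Both routes rest on the same technical pillars --- the scaled Poincar\'e constant (Proposition~\ref{prop:p-poincare-constant}), the scaled trace estimate (Proposition~\ref{prop:trace-with-scaling}), and a H\"older-in-$\ell^p$ aggregation over $\card\psites\lesssim\eps^{-\mfdim}$ cells --- but your version avoids the abstract existence lemma entirely, making it a bit more self-contained and elementary; what it gives up is the tidy packaging of the estimates into~\eqref{eqn:psi-estimate}. I verified your exponent bookkeeping and the per-cell bound after aggregation indeed reduces to $\max\{\eps,\,\bdconst^{(\mfdim-p)/(p(\mfdim-1))}\eps^{\mfdim(p-1)/(p(\mfdim-1))}\}\norm[\Lp p(\mfd)]{\grad f}$, matching (up to what looks like a sign typo in~\eqref{eqn:grad-f-grad-psi}) the conclusion of the paper's proof. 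Two small points worth making explicit when you write this up: first, you should state up front, as the paper does, that the case $p\ge\mfdim$ is recovered from smaller $p$ by the nesting of the dual spaces, since the trace bound requires $1<p<\mfdim$; second, your choice $\scale\asymp\eps\inv$ (balancing the two terms of the scaled $\W1p$-norm after Poincar\'e) differs from the paper's choice, which instead balances the two branches of the final $\max$ --- both are valid, but it is worth a sentence to prevent a reader from expecting the same intermediate quantities.
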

The proof of Proposition~\ref{prop:measures-converge} relies on
results developed in~\cite{gkl}. Let us say that a measure $\mu$ on
$\mfd$ is \emph{$p$-admissible} if the trace $\W1p(\mfd)\to\Lp
p(\mfd,\mu)$ is compact. We then have the following
\begin{lem}[{\cite[Lemma~3.16]{gkl}}]
  \label{lem:psi} 
  Let $(\mfd,\gee)$ be a compact Riemannian manifold, and let $1<p\le
  2$. Let $\xi,\mu$ be $p$-admissible measures. Let us write
  $p'=\frac{p}{p-1}$ for the H\"older conjugate of $p$.
  Then there exists a unique $\Psi\in \W{1}{p'}(\mfd)$ with zero
  $\xi$-average which satisfies the equation
  \begin{equation}
    \label{eqn:psi-eqn}
    \int_\mfd\grad f\cdot\grad\Psi\dv
    =
    \int_\mfd f
    \,\dee{\mu}
    -
    \frac{\mu(\mfd)}{\xi(\mfd)}
    \int_\mfd f
    \,\dee{\xi}
  \end{equation}
  for all $f\in\W1p(\mfd)$. Moreover, we have the following bound on
  the norm of $\Psi$:
  \begin{equation}
    \label{eqn:psi-estimate}
    \norm[\Lp{p'}(\mfd)]{\grad\Psi}
    \le
    (1+\Kay(\mfd,g))
    \mu(\mfd)^{1/p'}
    \norm{T^\mu_p}.
  \end{equation}
\end{lem}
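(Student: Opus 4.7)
The plan is to realize the right-hand side of \eqref{eqn:psi-eqn} as a bounded linear functional on the quotient space $W^{1,p}(\mfd)/\R$, and then to extract $\Psi$ via duality combined with $\Lp{p'}$-Hodge theory (equivalently, $\Lp{p'}$-inversion of the Neumann Laplacian) on $\mfd$.

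First I would check that $L(f) := \int_\mfd f \, \dee{\mu} - \frac{\mu(\mfd)}{\xi(\mfd)} \int_\mfd f \, \dee{\xi}$ is bounded on $W^{1,p}(\mfd)$ and vanishes on constants. Boundedness follows from H\"older's inequality paired with the trace estimates $\norm[\Lp{p}(\mfd,\,\dee{\mu})]{f} \le \norm{T^\mu_p}\norm[\W1p(\mfd)]{f}$ and its analogue for $\xi$, both provided by $p$-admissibility; vanishing on constants is immediate. Because $\xi(\mfd)>0$, each class in $W^{1,p}(\mfd)/\R$ has a representative with $\int f \, \dee{\xi} = 0$, for which $L(f) = \int f \, \dee{\mu}$; combining with the $p$-Poincar\'e inequality (comparing the $\xi$-mean to the Riemannian mean representative) then yields $\abs{L(f)} \le (1 + \Kay(\mfd, \gee))\mu(\mfd)^{1/p'}\norm{T^\mu_p}\norm[\Lp{p}(\mfd)]{\grad f}$, matching the right-hand side of \eqref{eqn:psi-estimate}.

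Next I would construct $\Psi$. By Poincar\'e, the gradient map $\grad : W^{1,p}(\mfd)/\R \to \Lp{p}(\mfd, T^*\mfd)$ is a bounded isomorphism onto a closed subspace, on which $L$ descends to a bounded functional. Hahn--Banach extends this to all of $\Lp{p}(\mfd, T^*\mfd)$ with the same norm, and $\Lp{p}$--$\Lp{p'}$ duality represents the extension by some $X \in \Lp{p'}(\mfd, T^*\mfd)$ with $\int_\mfd\grad f \cdot X\dv = L(f)$ and $\norm[\Lp{p'}(\mfd)]{X} \le \norm{L}$. The $\Lp{p'}$-Hodge decomposition on $\mfd$ (equivalently, inversion of the Neumann Laplacian between $W^{1,p'}/\R$ and the dual of $W^{1,p}/\R$) then extracts the gradient part of $X$, producing $\Psi \in W^{1,p'}(\mfd)/\R$ satisfying \eqref{eqn:psi-eqn} with $\norm[\Lp{p'}(\mfd)]{\grad\Psi} \le \norm[\Lp{p'}(\mfd)]{X}$. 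Imposing $\int\Psi\,\dee{\xi}=0$ pins down the constant. For uniqueness, if $\Psi_1, \Psi_2$ both satisfy \eqref{eqn:psi-eqn}, then $\Psi_1 - \Psi_2 \in W^{1,p'}(\mfd) \subseteq W^{1,p}(\mfd)$ (using $p \le 2$ on compact $\mfd$) is an admissible test function, and taking $f = \Psi_1-\Psi_2$ forces $\grad(\Psi_1 - \Psi_2) = 0$.

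The main obstacle is the Hodge / Neumann-Laplacian inversion step producing $\Psi$ from the vector field $X$. In the Hilbert case $p = 2$, this collapses to Riesz representation on $W^{1,2}(\mfd)/\R$ equipped with the Dirichlet inner product and is immediate. For $1 < p < 2$, it invokes standard $\Lp{p}$-elliptic theory (Calder\'on--Zygmund estimates or duality arguments) for the Neumann Laplacian on compact Riemannian manifolds; this is classical in the smooth-boundary case and requires additional technical care, but is well within reach, in the Lipschitz boundary setting admitted here.
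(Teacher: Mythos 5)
This lemma is not proved in the paper; it is quoted directly from \cite[Lemma~3.16]{gkl}, so there is no internal proof to compare your attempt against. On its own merits, your existence/uniqueness skeleton is sound: reduce to $\W1p(\mfd)/\R$, note $L$ annihilates constants and is bounded via $p$-admissibility of $\mu$ and $\xi$, solve the weak Neumann problem, pin the additive constant by $\int\Psi\,\dee\xi=0$, and conclude uniqueness by testing against $\Psi_1-\Psi_2\in\W1{p'}(\mfd)\subseteq\W12(\mfd)\subseteq\W1p(\mfd)$, which is legitimate on compact $\mfd$ since $p\le2\le p'$.

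The quantitative bound \eqref{eqn:psi-estimate} is where your argument has gaps. First, after Hahn--Banach produces $X\in\Lp{p'}(\mfd,T^*\mfd)$ with $\norm[\Lp{p'}(\mfd)]{X}=\norm{L}$, you claim that extracting the gradient part via the $\Lp{p'}$-Hodge decomposition gives $\norm[\Lp{p'}(\mfd)]{\grad\Psi}\le\norm[\Lp{p'}(\mfd)]{X}$. This is only automatic when $p'=2$, where the Helmholtz projection is an orthogonal (hence contractive) projector. For $p'>2$ the projection is bounded but not contractive, so what you actually obtain is $\norm[\Lp{p'}(\mfd)]{\grad\Psi}\le C_{p'}\norm{L}$ with $C_{p'}\ge1$ typically strict; equivalently, $\nabla\Psi$ is one particular representative of $L$ on the gradient subspace of $\Lp p$ but need not be norm-minimising outside the Hilbertian case (the $L^p$-dual element of $\nabla\Psi$, namely $\nabla\Psi\,\abs{\nabla\Psi}^{p'-2}$, is generically not a gradient, so the usual $p=2$ trick of testing against $\Psi$ itself does not close the loop). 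Second, the Poincar\'e step: \eqref{eqn:p-poincare} controls $\norm[\Lp p(\mfd)]{f-m_f}$ with $m_f$ the \emph{Riemannian} mean, while you take the representative with zero $\xi$-mean; moving between these two normalisations introduces a correction that depends on $\xi$ (through $\xi(\mfd)$ and $\norm{T^\xi_p}$), not just on $\Kay(\mfd,\gee)$. As written, the chain of inequalities you sketch does not yield precisely $(1+\Kay(\mfd,g))\mu(\mfd)^{1/p'}\norm{T^\mu_p}$. Both issues would require either a sharper argument or a weaker conclusion, e.g.\ an estimate with an unspecified $C(p,\mfd,\gee)$ in place of $(1+\Kay(\mfd,g))$; such a weakening may well be enough for the way the estimate is consumed in Proposition~\ref{prop:measures-converge}, but it does not reproduce \eqref{eqn:psi-estimate} as stated.
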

\begin{proof}[Proof of Proposition~\ref{prop:measures-converge}]
  By the monotonicity of the dual spaces of $\W1p(\mfd)$, it suffices to
  prove the desired convergence in $\dual{\W1p(\mfd)}$ for all $1<p<2$.

  Fix $f\in\W1p(\mfd)$. Recalling~\eqref{eqn:dom-dfn} we have that
  \begin{equation}
    \label{eqn:int-estimate-sum}
    \int_\mfd f\Vee\dv
    =
    \int_{\bdom}f\Vee\dv
    +
    \int_{\odom}f\Vee\dv
    +
    \sum_{\site\in\psites}\int_{\cell}f\Vee\dv.
  \end{equation}
  We estimate each piece of the integral individually. Applying the
  generalised H\"older inequality with exponents $(p,p',\infty)$ provide
  us with estimates that show that the first two of these terms vanish
  in the limit:
  \begin{itemize}
  \item
    For the cells near the boundary, observe that since by construction
    $\cell\subseteq\ball{\site}{3\eps}$, the entirety of $\bdom$ is
    contained within a neighbourhood of $\bd\mfd$ with $\bigO[\mfd](\eps)$
    thickness. Therefore we have $\Vol(\bdom)\lesssim_{\mfd,\Vee}\eps$, hence
    \begin{equation}
      \label{eqn:int-estimate-bdom}
      \int_{\bdom}f\Vee\dv
      \le
      \norm[\Lp\infty(\mfd)]{\Vee}
      \norm[\Lp p(\mfd)]{f}
      \Vol(\bdom)^{\frac{1}{p'}}
      \lesssim_{\mfd,\Vee,p}
      \eps^{\frac{p-1}{p}}
      \norm[\Lp p(\mfd)]{f}
    \end{equation}
    which tends to zero as $\eps\tendsto0$.
  \item
    Near $\osites$, since
    $\Vee\in\Cn1(\mfd)$ we have that
    $\norm[\Lp\infty(\odom)]{\Vee}=\bigO[\mfd,\Vee](\eps^\half)$. The
    volume of $\odom$ is finite since $\mfd$ is compact, so
    \begin{equation}
      \label{eqn:int-estimate-zdom}
      \int_{\odom}f\Vee\dv
      \le
      \norm[\Lp\infty(\odom)]{\Vee}
      \norm[\Lp p(\mfd)]{f}
      \Vol(\odom)^{\frac{1}{p'}}
      \lesssim_{\mfd,\Vee,p}
      \eps^\half
      \norm[\Lp p(\mfd)]{f}.
    \end{equation}
    This term, too, tends to zero as $\eps\tendsto0$.
  \end{itemize}
  Thus, to prove~\eqref{eqn:measures-converge}, it remains only to
  show that
  \begin{equation}
    \label{eqn:measures-converge-1}
    \bigabs{
    \sum_{\site\in\psites}\int_{\cell}f\Vee\dv
    -
    \int_{\bd\holes}f\bdparame\dA.
    }\tendsto0
  \end{equation}
  For each $\site\in\psites$, let us define $\Psii\in\W1{p'}(\cell)$ by
  applying Lemma~\ref{lem:psi} on the cell viewed as a Riemannian
  manifold $(\cell,g)$, choosing $\xi=\Vee\volume$ and
  $\mu=\mue$. Moreover rewriting the desired limit in
 ~\eqref{eqn:measures-converge-1} using~\eqref{eqn:psi-eqn}, we then
  have that
  \begin{equation}
    \label{eqn:psi-eqn-sum}
    \begin{aligned}
      \int_{\bd\holes}f\bdparame\dA
      &=
      \sum_{\site\in\psites}
      \int_{\bd\hole}f\bdparame\dA\\
      &=
      \sum_{\site\in\psites}
      \frac{\int_{\bd\hole}\bdparame\dA}{\int_{\cell} \Vee\dv}
      \int_{\cell}f\Vee\dv
      +
      \sum_{\site\in\psites}
      \int_{\cell} \grad f\cdot \grad\Psii\dv.
    \end{aligned}
  \end{equation}
  The first term on the right-hand side of~\eqref{eqn:psi-eqn-sum} tends
  to $\int_\mfd f\Vee\dv$; this follows from
  Proposition~\ref{prop:cell-integral-estimate} together with
 ~\eqref{eqn:int-estimate-sum} through
 ~\eqref{eqn:int-estimate-zdom}. Thus, to prove
 ~\eqref{eqn:measures-converge-1} in turn amounts to showing that the
  second term on the right-hand side of~\eqref{eqn:psi-eqn-sum} vanishes
  in the limit $\eps\tendsto 0$.

  To do this, we seek to bound $\norm[\Lp{p'}(\pcell)] {\grad\Psii}$.
  However, directly applying~\eqref{eqn:psi-estimate} in the present
  setting falls short of producing a sharp enough inequality. To improve
  upon this, we employ an amplification argument.

  Let us again view the cell as a Riemannian manifold, but equip it
  instead with the scaled metric $\gee_\scale$. By considering scaling
  properties we find that $\Psii$ satisfies
  \begin{equation}
    \label{eqn:psi-eqn-scaled}
      \scale^{2-\mfdim}\int_{\cell}\grad[\gee_\scale] f\cdot\grad[\gee_\scale]\Psii\dv[\gee_\scale]
      =
      \scale^{1-\mfdim}\int_{\bd\hole} f\dA[\gee_\scale]
      -
      \frac{\scale^{1-\mfdim}\int_{\bd\hole}\bdparame\dA[\gee_\scale]}{\scale^{-\mfdim}\int_{\cell}\dv}
      \scale^{-\mfdim}\int_{\cell} f\Vee\dv[\gee_\scale].
  \end{equation}
  for all $f\in\W1p(\cell)$. Defining now
  \begin{equation}
    \label{eqn:mmeas-xmeas-dfn}
    \begin{aligned}
      \Mues&=\scale\inv\mues=\scale\inv\bdparame\area[\gee_\scale]\\
      \Xies&=\Vee\volume[\gee_\scale]
    \end{aligned}
  \end{equation}
  we observe that $\Psii$ has zero $\Xies$-average and satisfies an
  equation of the form~\eqref{eqn:psi-eqn} with $\mu=\Mues$ and
  $\xi=\Xies$; as such, the norm of $\Psii$ is controlled by the
  corresponding estimate~\eqref{eqn:psi-estimate}.

  Recalling now the preceding results from this section, we have the
  following:
  \begin{itemize}
  \item
    By construction,
    \begin{equation}
      \label{eqn:mu-estimate}
      \Mues(\cell)^{\frac{1}{p'}}
      \asymp_{\mfd,\Vee,p}
      \left(
      \scale\inv\bdconst
      \left(\scale\radius\right)^{\mfdim-1}
      \right)^{\frac{1}{p'}}
      \asymp_{\mfd,\Vee,p}
      \scale^{\frac{(\mfdim-2)(p-1)}{p}}\eps^{\frac{\mfdim(p-1)}{p}}.
    \end{equation}
  \item
    By Proposition~\ref{prop:p-poincare-constant},
    \begin{equation}
      \label{Kay-estimate}
      \Kay(\cell,\volume[\gee_\scale])\lesssim_{\mfd,p}\scale^p\eps^p.
    \end{equation}
    The best that we can hope happens to the term
    $(1+\Kay(\cell,\volume[\gee_\scale]))$ is that it remains bounded
    in the limit, which is the case as long as
    $\scale\lesssim\eps\inv$.
  \item
    Recall the definition of $T^\mu_p$ from \cite[Corollary
      3.7]{gkl}. Here, with $\mu=\Mues$, this is that
    \begin{equation}
      T^{\Mues}_p:\W1p(\cell,\volume[\gee_\scale])\to\Lp p(\cell,\Mues)
    \end{equation}
    is the linear
    operator which extends the identity on smooth functions. Comparing
    this with the trace
    \begin{equation}
      \trace:\W1p(\pcell,\volume[\gee_\scale])\to\Lp
      p(\bd\hole,\area[\gee_\scale])
    \end{equation}
    which is defined in Proposition~\ref{prop:trace-with-scaling} as
    an extension of the identity on functions on $\cell$ smooth up to
    the boundary of the hole at $\site\in\psites$, we have for
    $f\in\W1p(\pcell)$ that
    \begin{equation}
      \begin{aligned}
        \norm[\Lp p(\cell,\mues)]{T^{\Mues}_p f}
        &\asymp_{\mfd,\Vee,p}
        \scale^{-\frac{1}{p}}
        \bdconst^{\frac{1}{p}}
        \norm[\Lp p(\bd\hole,{\area[\gee_\scale]})]{\trace f}\\
        &\lesssim_{\mfd,\Vee,p}
        \scale^{-\frac{1}{p}}
        \bdconst^{\frac{1}{p}}
        \max\left\{
        \scale^{-\frac{1}{p}}
        \bdconst^{-\frac{1}{p}}
        ,
        \scale^{\frac{p-1}{p}}
        \bdconst^{-\frac{p-1}{(\mfdim-1)p}}
        \eps^{\frac{\mfdim(p-1)}{(\mfdim-1)p}}
        \right\}
        \norm[\W1p(\pcell,{\volume[\gee_\scale]})]{f}
      \end{aligned}
    \end{equation}
    using~\eqref{eqn:trace-with-scaling}. By replacing
    $\norm[\W1p(\pcell,{\volume[\gee_\scale]})]{f}$ with the
    larger quantity $\norm[\W1p(\cell,{\volume[\gee_\scale]})]{f}$, we
    conclude that
    \begin{equation}
      \label{eqn:T-estimate}
      \norm{T^{\Mues}_p}\lesssim_{\mfd,\Vee,p}
      \scale^{-\frac{1}{p}}
      \bdconst^{\frac{1}{p}}
      \max\left\{
      \scale^{-\frac{1}{p}}
      \bdconst^{-\frac{1}{p}}
      ,
      \scale^{\frac{p-1}{p}}
      \bdconst^{-\frac{p-1}{p(\mfdim-1)}}
      \eps^{\frac{\mfdim(p-1)}{(\mfdim-1)p}}
      \right\}.
    \end{equation}
  \end{itemize}
  Inserting the
  estimates~\eqref{eqn:mu-estimate}--\eqref{eqn:T-estimate}
  into~\eqref{eqn:psi-estimate}, we recover
  \begin{equation}
    \label{eqn:psi-rate}
    \norm[\Lp{p'}(\cell,\gee_\scale)]{\grad[\gee_\scale]\Psii}
    \lesssim_{\mfd,\Vee,p}
    \rate(\scale,\eps)
  \end{equation}
  in which, for brevity, we have defined
  \begin{equation}
    \label{eqn:rate-dfn}
    \rate(\scale,\eps)
    =
    \max\Big\{
    \scale^{\mfdim-2-\frac{\mfdim}{p}}
    \bdconst^{\frac{\mfdim(p-1)}{p}}
    \eps^{\frac{\mfdim(p-1)}{p}}
    ,
    \scale^{\mfdim-1-\frac{\mfdim}{p}}
    \bdconst^{\frac{\mfdim-p}{(\mfdim-1)p}}
    \eps^{\frac{\mfdim^2(p-1)}{(\mfdim-1)p}}
    \Big\}.
  \end{equation}
  We return now to~\eqref{eqn:psi-eqn-sum}. Recall that for any family
  $\sequence{a_i}{i\in I}$ of real numbers indexed by a finite set
  $I$,
  \begin{equation}
    \label{eqn:holder-sum}
    \sum_{i\in I}\abs{a_i}
    \le
    (\card{I})^{\frac{1}{p'}}\left(\sum_{i\in I}\abs{a_i}^p\right)^{\frac{1}{p}}
  \end{equation}
  which follows from an application of H\"older's inequality. Using
  this, we obtain the following:
  \begin{equation}
    \label{eqn:holder-sum-2}
    \begin{aligned}
      \sum_{\site\in\psites}
      \int_{\cell}\grad f\cdot\grad \Psii\dv
      &=
      \scale^{2-\mfdim}
      \sum_{\site\in\psites}
      \int_{\cell}\grad[\gee_\scale] f\cdot\grad[\gee_\scale] \Psii\dv[\gee_\scale]\\
      &\le
      \scale^{2-\mfdim}
      \sum_{\site\in\psites}
      \norm[\Lp{p}(\cell,{\volume[\gee_\scale]})]{\grad[\gee_\scale] f}
      \norm[\Lp{p'}(\cell,{\volume[\gee_\scale]})]{\grad[\gee_\scale]\Psii}\\
      &\lesssim_{\mfd,\Vee,p}
      \scale^{2-\mfdim}
      \rate(\scale,\eps)
      \abs{\psites}^{\frac{1}{p'}}
      \left(
      \sum_{\site\in\psites}
      \norm[\Lp p(\cell,{\volume[\gee_\scale]})]{\grad[\gee_\scale] f}^p
      \right)^{\frac{1}{p}}\\
      &\lesssim_{\mfd,\Vee,p}
      \scale^{1-\mfdim+\frac{\mfdim}{p}}
      \eps^{\frac{-\mfdim(p-1)}{p}}
      \rate(\scale,\eps)\norm[\Lp p(\mfd)]{\grad f}\\
      &\lesssim_{\mfd,\Vee,p}
      \max\left\{
      \scale\inv,
      \bdconst^{\frac{\mfdim-p}{(\mfdim-1)p}}
      \eps^{\frac{\mfdim(p-1)}{(\mfdim-1)p}}
      \right\}\norm[\Lp p(\mfd)]{\grad f}
    \end{aligned}
  \end{equation}
  where we arrived on the penultimate line by observing that
  $\card{\psites}\le\card{\sites}\asymp_\mfd\eps^{-\mfdim}$,
  and also that
  \begin{equation}
    \sum_{\site\in\psites}
    \norm[\Lp p(\cell,{\volume[\gee_\scale]})]{\grad[\gee_\scale] f}^p
    \le\norm[\Lp p(\mfd,{\volume[\gee_\scale]})]{\grad[\gee_\scale]f}^p
    =\scale^{\mfdim-p}\norm[\Lp p(\mfd)]{\grad f}^p
  \end{equation}
  by scaling properties of the $\Lp p$-norm together with
 ~\eqref{eqn:holder-sum}.
  
  The sharpest bound we can recover from~\eqref{eqn:holder-sum-2} is
  achieved by balancing the two branches of the maximum with the
  choice $\scale=\bdconst^{-\frac{\mfdim-p}{(\mfdim-1)p}}
  \eps^{-\frac{\mfdim(p-1)}{(\mfdim-1)p}}$, which gives us that
  \begin{equation}
    \label{eqn:grad-f-grad-psi}
    \sum_{\site\in\psites}
    \int_{\cell} \grad f\cdot \grad\Psii\dv
    \lesssim_{\mfd,\Vee,p}
    \bdconst^{-\frac{\mfdim-p}{(\mfdim-1)p}}
    \eps^{-\frac{\mfdim(p-1)}{(\mfdim-1)p}}
    \norm[\Lp p(\mfd)]{\grad f}.
  \end{equation}
  For $d>p>1$, this vanishes as $\eps\tendsto0$ so long as
  \begin{equation}
    \bdconst=\littleo[\mfd,\Vee,p]\left(\eps^{-\frac{\mfdim(p-1)}{\mfdim-p}}\right),
  \end{equation}
  which is hypothesis~\eqref{eqn:bdconst-range}. Thus we arrive
  at the desired conclusion that the corresponding term in
  ~\eqref{eqn:psi-eqn-sum} vanishes in the limit, which finishes the
  proof.
\end{proof}
\subsection{Approaching the Schr\"odinger problem in the limit}
\label{ssec:problem-in-limit}
Equipped with the results from the previous section, we proceed now
towards showing that the spectrum of the Robin problem on the
perforated domains $\dome$ converge to that of the Schr\"odinger
problem on $\mfd$. This is Proposition~\ref{prop:main}, from
which the main results Theorem~\ref{thm:main-nonneg}
and~\ref{thm:main} are recovered.
Because it is useful later, and for the clarity of isolating the
properties that are essential for carrying out the proof, we formulate
the results in this section in the following setting:

Let a subset $T$ of a $\mfdim$-dimensional Riemannian manifold $\mfd$
be such that $\cl{T}\subseteq\interior{\mfd}$, and let
$\dom=\mfd\setminus T$.  We consider the following eigenvalue problem,
formulated in weak form on the space $\Wz12(\dom)$: given a
measure~$\nu$ on $\dom$, we seek non-trivial $\lam[]{\Rch}$ and
$\yu\in\Wz12(\dom)$ satisfying for all $v\in\Wz12(\dom)$ that
\begin{equation}
  \label{eqn:weak-intermediate}
  \int_{\dom}\grad \yu\cdot\grad v\dv
  +
  \int_{\dom}\yu v\,\dee\nu
  +
  \int_{\bdmfd}\yu v\Bdparam\dA
  =
  \lam[]{\Rch}
  \int_{\dom}\yu v\dv.
\end{equation}
If $\mfdim\ge3$ and integration against $\nu$ defines a linear
functional in $\dual{\W1{\frac{\mfdim}{\mfdim-1}}(\mfd)}$, then we
again have a spectrum consisting of discrete eigenvalues; when
$\mfdim=2$, it suffices to have
$\nu\in\dual{\W1{2,-\half}(\mfd)}$. In this case we write
$\lam{\Rch}(\dom,\nu,\Bdparam)$---without a superscript---for the $k$-th eigenvalue, and
$\yuk{\Rch}(\dom,\nu,\Bdparam)\in\Wz12(\dom)$ for the corresponding
eigenfunctions, chosen to be orthonormal in $\Lp2(\dom)$.

We then have the following proposition:
\begin{prop}[Conditions for spectrum convergence]
  \label{prop:main-intermediate}
  Let $\mfd$ be a Riemannian manifold of dimension $\mfdim\ge2$, and
  let $\Vee$ be an admissible potential on $\mfd$. Let
  $\sequence{\dome[\eps]}{\eps>0}$ be a family of domains in $\mfd$,
  and let $\sequence{\nue}{\eps>0}$ be a family of Radon measures
  such that $\nue$ is supported on $\cl{\dome[\eps]}$. Let the
  following conditions be satisfied:
  \begin{itemize}
  \item[{\hypone}] For all $\eps>0$,
    $\dome[\eps]=\mfd\setminus\holes[\eps]$ where
    $\cl{\holes[\eps]}\subseteq\interior{\mfd}$, and
    $\Vol(\holes[\eps])\tendsto0$ as $\eps\tendsto0$.
  \item[{\hyptwo}] There exist an equibounded family $\sequence{\gext[\eps]}{\eps>0}$ of
    extension operators $\gext[\eps]:\Wz12(\dome[\eps])\to\Wz12(\mfd)$,
  \end{itemize}
  and, writing $\Wprod$ for $\W1{\frac{\mfdim}{\mfdim-1}}(\mfd)$ if
  $\mfdim\ge3$ and for $\W1{2,-\half}(\mfd)$ if $\mfdim=2$,
  \begin{itemize}
  \item[{\hypthree}] $\nue\in\dual{\Wprod}$ all $\eps>0$, and $\nue\tendsto\Vee$ in
    $\dual{\Wprod}$ as $\eps\tendsto0$.
  \end{itemize}
  Then there is a sequence $\sequence{\eps_n}{n\in\N}$ such that
  $\eps_n\tendsto0$ as $n\tendsto\infty$, along which
  \begin{itemize}
  \item $\lam{\Rch}(\dome[\eps_n],\nue,\Bdparam)\tendsto\lam{\Sch}(\mfd,\Vee,\Bdparam)$, and
  \item
    $\gext[\eps]\yuk{\Rch}(\dome[\eps_n],\nue[\eps_n],\Bdparam)$
    tends to $\yuk{\Sch}(\mfd,\Vee,\Bdparam)$ in the weak topology
    on $\W12(\mfd)$.
  \end{itemize}
\end{prop}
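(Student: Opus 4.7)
The plan is to combine the variational (min--max) characterisation of eigenvalues with a compactness argument, with the three hypotheses playing distinct roles: \hypone{} identifies integrals over $\dome[\eps]$ with those over $\mfd$ in the limit; \hyptwo{} supplies the compactness needed to pass to a limit in $\W12(\mfd)$; and \hypthree, together with the Sobolev multiplication theorems (Propositions~\ref{prop:sobolev-multiplication-i} and~\ref{prop:sobolev-multiplication-ii}), handles the potential term.

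For the upper bound $\limsup_{\eps\to 0}\lam{\Rch}(\dome[\eps],\nue,\Bdparam)\le\lam{\Sch}(\mfd,\Vee,\Bdparam)$, I would take the span $E_k\subseteq\Wz12(\mfd)$ of the first $k$ Schr\"odinger eigenfunctions as trial space for min--max on $\dome[\eps]$: because $\zbd\subseteq\bd\mfd\subseteq\bd\dome[\eps]$, restriction maps $E_k$ into $\Wz12(\dome[\eps])$, and by \hypone{} the map is injective for $\eps$ small. Then $\Rayleigh{\Rch}(\dome[\eps],\nue,\Bdparam,\,\cdot\,)$ converges uniformly over the unit sphere of this subspace to $\Rayleigh{\Sch}(\mfd,\Vee,\Bdparam,\,\cdot\,)$: the kinetic and $\Lp2$ terms converge by absolute continuity of the integral using $\Vol(\holes[\eps])\to 0$, and the potential term converges because $f^2\in\Wprod$ for $f\in\W12$ (Sobolev multiplication), so that $\int f^2\,\dee\nue\to\int_\mfd f^2\Vee\dv$ by~\hypthree.

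For the lower bound, let $\yuek[\eps]\in\Wz12(\dome[\eps])$ be the $\Lp2$-orthonormal Robin eigenfunctions. The upper bound on $\lam{\Rch}$, together with the uniform $\dual{\Wprod}$-boundedness of $\nue$ coming from \hypthree, gives a uniform bound on $\norm[\W12(\dome[\eps])]{\yuek[\eps]}$. By \hyptwo, the extensions $\Yuek[\eps]=\gext[\eps]\yuek[\eps]$ are uniformly bounded in $\Wz12(\mfd)$; a diagonal argument extracts a subsequence $\eps_n\to 0$ along which, simultaneously for every $k$, the eigenvalues $\lam{\Rch}(\dome[\eps_n],\nue[\eps_n],\Bdparam)$ converge to some $\bar\lambda_k$ and $\Yuek[\eps_n]$ converges weakly in $\W12(\mfd)$ and strongly in $\Lp2(\mfd)$ to some $\Yuuk$. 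Strong $\Lp2$ convergence together with \hypone{} preserves the $\Lp2$-orthonormality of $\{\Yuuk\}_{k\in\N}$.

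It remains to pass to the limit in the weak formulation~\eqref{eqn:weak-intermediate} tested against an arbitrary $v\in\Wz12(\mfd)$. The kinetic and $\bd\mfd$-boundary terms are handled by weak $\W12$ convergence plus absolute continuity of the integral over $\holes[\eps_n]$ (using~\hypone). The main obstacle is the potential term: \hypthree{} provides only strong convergence of measures in $\dual{\Wprod}$, while the test function $\Yuek[\eps_n]v$ converges only weakly in $\Wprod$ (weak convergence follows from Sobolev multiplication applied to the weak $\W12$ convergence of $\Yuek[\eps_n]$, with strong $\Lp p$ convergence handling the cross-terms in $\grad(\Yuek[\eps_n]v)$). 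I would resolve this by splitting
\begin{equation}
  \int_\mfd\Yuek[\eps_n]v\dnue[\eps_n] - \int_\mfd\Yuuk v\Vee\dv
  =
  \langle\nue[\eps_n]-\Vee\volume,\Yuek[\eps_n]v\rangle
  +
  \int_\mfd(\Yuek[\eps_n]-\Yuuk)\,v\Vee\dv,
\end{equation}
where the first summand vanishes by the strong convergence $\norm[\dual{\Wprod}]{\nue[\eps_n]-\Vee\volume}\to 0$ against the $\Wprod$-bounded quantity $\Yuek[\eps_n]v$, and the second vanishes by weak $\W12$ convergence of $\Yuek[\eps_n]-\Yuuk$ tested against the fixed $v\Vee\in\dual{\Wprod}$. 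This identifies each $\Yuuk$ as a weak solution of the Schr\"odinger problem with eigenvalue $\bar\lambda_k$; a min--max argument applied in the limit then gives $\bar\lambda_k\ge\lam{\Sch}(\mfd,\Vee,\Bdparam)$, which combined with the upper bound forces equality and identifies $\Yuuk$ with the $k$-th Schr\"odinger eigenfunction.
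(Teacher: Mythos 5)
Your overall roadmap matches the paper's: min--max gives the $\limsup$ inequality; a uniform $\W12$ bound on the extended eigenfunctions plus a diagonal argument gives a weakly convergent subsequence; one identifies the weak limit as a Schr\"odinger eigenfunction by passing to the limit in the weak formulation (with exactly the same splitting $\langle\nue-\Vee\volume,\,\cdot\,\rangle + \langle\Vee\volume,\,\cdot\,\rangle$ for the potential term); and orthonormality is preserved via strong $\Lp2$ convergence and \hypone. So the route is essentially the paper's.

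There is, however, a genuine gap in your lower-bound argument. You claim that the upper bound on $\lam{\Rch}$ together with the \emph{uniform $\dual{\Wprod}$-boundedness} of $\nue$ yields a uniform bound on $\norm[\W12(\dome[\eps])]{\yuek[\eps]}$. This is not enough. Setting $u=\gext[\eps]\yuek[\eps]$ and testing the weak formulation against $\yuek[\eps]$ gives
\begin{equation}
  \norm[\Lp2]{\grad\yuek[\eps]}^2
  =
  \lam{\Rch}
  -
  \int u^2\dnue
  -
  \int_{\bdmfd}(\yuek[\eps])^2\Bdparam\dA,
\end{equation}
and the only way to control $\int u^2\dnue$ using \hypthree{} is to bound it by $\norm[\dual{\Wprod}]{\nue}\norm[\Wprod]{u^2}$. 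By the Sobolev multiplication theorem $\norm[\Wprod]{u^2}\lesssim\norm[\W12]{u}^2$, and this quadratic dependence on $\norm[\W12]{u}$ is sharp (the gradient contribution $\norm[\Lp{\frac{\mfdim}{\mfdim-1}}]{u\grad u}$ really does scale like the product $\norm[\Lp{2^*}]{u}\,\norm[\Lp2]{\grad u}$, with no interpolation gain on the $\Lp{2^*}$ factor). So the bound reads $\norm[\Lp2]{\grad\yuek[\eps]}^2\le C + C\norm[\dual{\Wprod}]{\nue}\norm[\W12]{\gext\yuek[\eps]}^2$, and since $\norm[\dual{\Wprod}]{\nue}$ converges to $\norm[\dual{\Wprod}]{\Vee\volume}$---which need not be small---you cannot absorb. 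The paper's proof (Proposition~\ref{prop:norm-bounded}) does not rely on boundedness: it uses the \emph{convergence} $\norm[\dual{\Wprod}]{\nue-\Vee\volume}\to0$, so that the coefficient multiplying $\norm[\W12]{u}^2$ becomes small enough to absorb, and then bounds the residual term $\int u^2\Vee\dv$ separately. The residual requires $\Vee\in\Lp\infty$, which is why the paper first restricts to $\Vee\in\Cn1(\mfd)$ and afterwards passes to admissible $\Vee$ via the norm-resolvent continuity argument of Proposition~\ref{prop:schroedinger-approx}; you omit this two-step structure entirely. To fix your proof you should either adopt that two-step approach, or make the absorption explicit using $\norm[\dual{\Wprod}]{\nue-\Vee\volume}\to0$ and handle the $\int u^2\Vee\dv$ term for admissible $\Vee$ by a splitting $\Vee=\Vee\chf{E_\delta}+\Vee\chf{E_\delta^c}$ with $\Vee$ bounded off $E_\delta$ and $\norm[\Lp{\mfdim/2}(E_\delta)]{\Vee}\le\delta$ (exactly as in~\eqref{eqn:h-expanding}). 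A similar remark applies to the boundary term $\int_{\bdmfd}(\yuek[\eps])^2\Bdparam$, for which the naive trace bound again produces a quantity comparable to $\norm[\W12]{u}^2$, so one needs the $\delta$-interpolated trace inequality to absorb.

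Two smaller remarks. First, when you write the weak limit identification you test the potential term against the \emph{fixed} $v\Vee\in\dual{\W12(\mfd)}$; that is fine, but the paper works with $v\in\Cn\infty(\mfd)$ precisely to avoid thinking about the integrability class of $v\Vee$, and you should note (as you implicitly do) that $\W12\hookrightarrow\Wprod$ so that $\dual{\Wprod}\hookrightarrow\dual{\W12}$ is what makes $v\Vee$ admissible as a test functional. Second, using the span of the actual Schr\"odinger eigenfunctions as the trial space in the upper bound is acceptable, but the paper uses a smooth $\delta$-approximating subspace of $\Cn\infty(\mfd)\cap\Wz12(\mfd)$; since the eigenfunctions need not be smooth for admissible $\Vee$, it is slightly cleaner to follow the paper.
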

\begin{rmk}
  By the embeddings~\eqref{eqn:orlicz-sobolev-embedding}, for any
  $\mfdim\ge2$, {\hypthree} is satisfied as soon as
  $\nue\tendsto\Vee$ in $\dual{\W1p(\mfd)}$ for some
  $p<\frac{\mfdim}{\mfdim-1}$.
\end{rmk}
\begin{rmk}
  Hypotheses {\hypone}, {\hyptwo}, and {\hypthree} make precise the
  two conditions considered in Section~\ref{ssec:outline-of-proof}.
  Compare also to conditions \hyp{M1} through \hyp{M3} in Girouard,
  Karpukhin, and Lagac\'e~\cite{gkl}, which support a similar
  conclusion for variational eigenvalues.
\end{rmk}
Let us first see that Proposition~\ref{prop:main} is recovered from
Proposition~\ref{prop:main-intermediate}.
\begin{proof}[Proof of Proposition~\ref{prop:main} from Proposition~\ref{prop:main-intermediate}]
  Observe that with the choices $\dom=\dome[\eps]$ and
  $\nu=\mue=\bdparame\area$, \eqref{eqn:weak-intermediate} becomes
  identical to \eqref{eqn:weak-robin}; in particular,
  $\lam{\Rob}(\dome,\bdparame,\Bdparam)=\lam{\Rch}(\dome,\mue,\Bdparam)$. Thus,
  the proof amounts to verifying {\hypone}, {\hyptwo}, and
  {\hypthree}.

By hypothesis, $\alpha=\alpha(\eps)$
satisfies~\eqref{eqn:bdconst-hyp}. We thus have the following:
\begin{itemize}
  \item Using \eqref{eqn:radius-dfn}, we estimate the total volume of
    the holes:
  \begin{equation}
    \Vol(\holes)=
    \sum_{\site\in\psites}\Vol{\hole}
    \lesssim_{\mfd,\Vee}
    \underbrace{
      \card{\psites}
      }_{\lesssim{\eps^{-\mfdim}}}
    \left(
    \bdconst^{-\frac{1}{\mfdim-1}}
    \eps^{\frac{\mfdim}{\mfdim-1}}
    \right)^{\mfdim}
    \lesssim_{\mfd,\Vee}
    \left(\frac{\eps}{\bdconst}\right)^{\frac{\mfdim}{\mfdim-1}}.
    \end{equation}
since $\bdconst\inv\in\littleo(\eps\inv)$, this vanishes in the limit $\eps\tendsto0$, and {\hypone} is verified.
\item {\hyptwo} is exactly Lemma~\ref{lem:extension-operator}.
\item {\hypthree} follows from  Proposition~\ref{prop:measures-converge} and the embeddings~\eqref{eqn:orlicz-sobolev-embedding}.
\end{itemize}
This finishes the proof.
\end{proof}
The goal for the remainder of this section is the proof of
Proposition~\ref{prop:main-intermediate}. This proceeds in two
steps. First, we again restrict our attention to the case of
continuously differentiable potentials $\Vee\in\Cn1(\mfd)$, and show
that we can attain the desired convergence in this case; this is done
in the first half of this section up to
Proposition~\ref{prop:orthonormality}. We then show that the
Schr\"odinger eigenvalues and eigenfunctions converge in the
appropriate sense, so that the main result is recovered via the result
for $\Cn1$ potentials, combined with density of $\Cn1(\mfd)$ in the
space of admissible potentials.

Let us now assume $\Vee\in\Cn1(\mfd)$. Throughout the remainder of
this section, $\sequence{\dome[\eps]}{\eps>0}$ shall always denote a
family of domains in a Riemannian manifold $\mfd$ of dimension
$\mfdim\ge2$, and $\sequence{\nue}{\eps>0}$ a family of Radon measures
such that each $\nue$ is supported on $\cl{\dome[\eps]}$.

We begin by establishing upper semi-continuity of eigenvalues
$\lam{\Rch}(\dome[\eps],\nue,\Bdparam)$. The proof proceeds similarly
to \cite[Proposition 4.7]{gkl}; see also \cite[Proposition
  1.1]{kokarev}.
\begin{prop}[Upper semi-continuity of eigenvalues]
  \label{prop:robin-eigenvalues-upper-semicontinuous}
  Let $\sequence{\dome[\eps]}{\eps>0}$ and $\sequence{\nue}{\eps>0}$
  satisfy hypotheses {\hypone} and {\hypthree}. Then for each
  $k\in\Npos$, we have that
  \begin{equation}
    \limsup_{\eps\tendsto0}
  \lam{\Rch}(\dome[\eps],\nue,\Bdparam)\le\lam{\Sch}(\mfd,\Vee,\Bdparam).
  \end{equation}
\end{prop}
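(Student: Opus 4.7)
The plan is to argue via the variational min-max characterisation of eigenvalues: we produce a $k$-dimensional subspace of test functions for the problem on $\dome[\eps]$ by restricting the first $k$ Schr\"odinger eigenfunctions on $\mfd$, and show that their Rayleigh quotients converge to those for the Schr\"odinger problem, which are controlled by $\lam{\Sch}(\mfd,\Vee,\Bdparam)$.

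First, let $F_k=\operatorname{span}\{\yuk[1]{\Sch}(\mfd,\Vee,\Bdparam),\dots,\yuk[k]{\Sch}(\mfd,\Vee,\Bdparam)\}\subseteq\Wz12(\mfd)$. Because $\cl{\holes[\eps]}\subseteq\interior\mfd$ by \hypone, the restriction map $u\mapsto u\at{\dome[\eps]}$ sends $\Wz12(\mfd)$ into $\Wz12(\dome[\eps])$, since approximation by smooth functions vanishing on~$\zbd$ is preserved under restriction. Let $F_k^\eps$ denote the image of $F_k$ under this restriction. A short argument using that $\Vol(\holes[\eps])\tendsto0$ and that $F_k$ is finite-dimensional shows that the restriction is eventually injective on $F_k$, so $\dim F_k^\eps=k$ for all sufficiently small $\eps$.

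Next I would establish that, for each $u\in F_k$, the Rayleigh quotient $\Rayleigh{\Rch}(\dome[\eps],\nue,\Bdparam, u\at{\dome[\eps]})$ converges to $\Rayleigh{\Sch}(\mfd,\Vee,\Bdparam,u)$. Term by term: the gradient and $\Lp2$ integrals over $\dome[\eps]$ converge to those over $\mfd$ by dominated convergence, using $\chf{\dome[\eps]}\tendsto\chf\mfd$ almost everywhere (which follows from $\Vol(\holes[\eps])\tendsto0$) together with $u,\grad u\in\Lp2(\mfd)$. The boundary integral on $\bdmfd$ is identical in both Rayleigh quotients. The main step is the potential term: since $u\in\Wz12(\mfd)\subseteq\W12(\mfd)$, Proposition~\ref{prop:sobolev-multiplication-i} (if $\mfdim\ge 3$) or Proposition~\ref{prop:sobolev-multiplication-ii} (if $\mfdim=2$) gives $u^2\in\Wprod$, and thus \hypthree\ implies
\begin{equation}
\int_{\dome[\eps]}u^2\dnue=\int_\mfd u^2\dnue\tendsto\int_\mfd u^2\Vee\dv,
\end{equation}
where the first equality uses that $\nue$ is supported in $\cl{\dome[\eps]}$. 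Because $F_k$ is finite-dimensional, these pointwise limits improve to uniform convergence on the unit sphere of $F_k$, hence (via the injectivity established above and the continuity of the Rayleigh quotient in its denominator) also on the unit sphere of $F_k^\eps$.

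To conclude, the min-max principle gives
\begin{equation}
\lam{\Rch}(\dome[\eps],\nue,\Bdparam)\le\sup_{u\in F_k\setminus\{0\}}\Rayleigh{\Rch}(\dome[\eps],\nue,\Bdparam, u\at{\dome[\eps]}),
\end{equation}
and letting $\eps\tendsto 0$ and applying the uniform convergence yields
\begin{equation}
\limsup_{\eps\tendsto 0}\lam{\Rch}(\dome[\eps],\nue,\Bdparam)\le\sup_{u\in F_k\setminus\{0\}}\Rayleigh{\Sch}(\mfd,\Vee,\Bdparam, u)=\lam{\Sch}(\mfd,\Vee,\Bdparam),
\end{equation}
where the last equality uses that $F_k$ is spanned by the first $k$ Schr\"odinger eigenfunctions. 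The subtle point is the potential term: it is exactly the interplay between the regularity hypothesis~\hypthree\ (formulated in the dual of the Orlicz--Sobolev space $\Wprod$) and the Sobolev multiplication theorems that makes $u^2$ an admissible test function against $\nue$ and guarantees the convergence; the remainder of the argument is standard finite-dimensional bookkeeping together with the volume-shrinking condition~\hypone.
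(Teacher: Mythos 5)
Your argument is essentially the same as the paper's: choose a good $k$-dimensional test subspace $F_k$ in $\Wz12(\mfd)$, restrict it to $\dome[\eps]$, show the restricted Rayleigh quotients converge uniformly on the unit sphere, and conclude by the min--max principle. The one structural difference is your choice of $F_k$: the paper takes a smooth subspace $F_k\subseteq\Cn\infty(\mfd)\cap\Wz12(\mfd)$ with $\sup_{f\in F_k}\Rayleigh{\Sch}<\lam{\Sch}+\thalf\delta$ and then closes a $\delta$-slop, precisely so that it only needs to pair $\nue$ against smooth $f^2$; you instead take $F_k$ to be the span of the actual Schr\"odinger eigenfunctions (only in $\Wz12$) and invoke Propositions~\ref{prop:sobolev-multiplication-i}/\ref{prop:sobolev-multiplication-ii} to place $u^2$ in $\Wprod$. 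Your route gives exact equality $\sup_{F_k}\Rayleigh{\Sch}=\lam{\Sch}$ without the $\delta$-bookkeeping, at the cost of leaning on the Sobolev multiplication theorems here rather than only later; since the paper uses those theorems anyway in Proposition~\ref{prop:norm-bounded}, this is a small and legitimate variation.

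One small imprecision to fix: the convergence $\int_{\dome[\eps]}\abs{\grad u}^2\dv\to\int_\mfd\abs{\grad u}^2\dv$ (and similarly for $\int u^2$) is not literally dominated convergence via $\chf{\dome[\eps]}\to\chf{\mfd}$ a.e., because the holes $\holes[\eps]$ need not converge pointwise to a null set --- they may move around. What you actually use is absolute continuity of the Lebesgue integral: for fixed $u\in\W12(\mfd)$, $\int_{\holes[\eps]}(\abs{\grad u}^2+u^2)\dv\to0$ because $\Vol(\holes[\eps])\to0$. The conclusion is unchanged; only the cited mechanism should be corrected.
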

\begin{proof}
  We do this by showing that, for any $\delta>0$, we have for all
  sufficiently small $\eps>0$ that
  \begin{equation}
    \label{eqn:robin-eigenvalue-limsup}
    \lam{\Rch}(\dome[\eps],\nue,\Bdparam)
    <
    \lam{\Sch}(\mfd,\Vee,\Bdparam)+\delta.
    \end{equation}
  Fix $\delta>0$. Using the variational characterisation of the
  Schr\"odinger eigenvalues, we choose a $k$-dimensional subspace
  $F_k$ of $\Cn\infty(\mfd)\cap\Wz12(\mfd)$ such that
  \begin{equation}
    \label{eqn:schroedinger-minimax-est}
    \sup_{f\in F_k\setminus\{0\}}\Rayleigh{\Sch}(\mfd,\Vee,\Bdparam, f)
    <
    \lam{\Sch}(\mfd,\Vee,\Bdparam)
    +\thalf\delta.
  \end{equation}
  Now the Rayleigh quotient corresponding to the
  problem~\eqref{eqn:weak-intermediate} is
  \begin{equation}
    \label{eqn:intermediate-rayleigh}
  \Rayleigh{\Rch}(\dom,\nu,\Bdparam, f)
  =
  \frac{\int_\dom\abs{\grad f}^2\dv+\int_\dom f^2\,\dee\nu+\int_{\bdmfd}f^2\Bdparam\dA}{\int_\dom f^2\dv}.
  \end{equation}
  Comparing this with the Rayleigh
  quotient~\eqref{eqn:schroedinger-rayleigh} for the Schr\"odinger
  problem and using the convergences {\hypone} and {\hypthree}, we
  have for any $f\in\Cn\infty(\mfd)$ that
  \begin{equation}
    \label{eqn:rayleigh-converge}
    \lim_{\eps\tendsto0}
    \Rayleigh{\Rch}(\dome[\eps],\nue,\Bdparam,f)
    =\Rayleigh{\Sch}(\mfd,\Vee,\Bdparam,f).
    \end{equation}
  For each $\eps>0$, the map
  $f\mapsto\Rayleigh{\Rch}(\dome[\eps],\nue,\Bdparam,f)$ is continuous
  on $F_k\setminus\{0\}$, and in particular on the set
  \begin{equation}
    \overline{F}_k=\{f\in
    F_k
    \,:
    \norm[\Lp2(\mfd)]{f}=1\}\subseteq\Cn\infty(\mfd),
  \end{equation}
  which is compact since $F_k$ is finite-dimensional. Therefore the
  convergence~\eqref{eqn:rayleigh-converge} is uniform over
  $\overline{F}_k$, consequently
  \begin{equation}
    \label{eqn:rayleigh-converge-2}
    \begin{aligned}
      \lim_{\eps\tendsto0}
      \sup_{f\in F_k\setminus\{0\}}
      \Rayleigh{\Rch}(\dome[\eps],\nue,\Bdparam,f)
      &=
      \lim_{\eps\tendsto0}
      \sup_{f\in \overline{F}_k}
      \Rayleigh{\Rch}(\dome[\eps],\nue,\Bdparam,f)\\
      &=
      \sup_{f\in \overline{F}_k}
      \lim_{\eps\tendsto0}
      \Rayleigh{\Rch}(\dome[\eps],\nue,\Bdparam,f)\\
      &=
      \sup_{f\in\overline{F}_k}\Rayleigh{\Sch}(\mfd,\Vee,\Bdparam,f)\\
      &=
      \sup_{f\in F_k\setminus\{0\}}\Rayleigh{\Sch}(\mfd,\Vee,\Bdparam,f).
      \end{aligned}
    \end{equation}
  Finally, it follows from {\hypone} that for all sufficiently small
  $\eps>0$ the subspace $F_k$ restricts to a subspace of
  $\Wz12(\dome)$ that is still $k$-dimensional. Using the variational
  characterisation of $\lam{\Rch}(\dome[\eps],\nue,\Bdparam)$ followed
  by~\eqref{eqn:rayleigh-converge-2}, we then have for all
  sufficiently small $\eps>0$ that
  \begin{equation}
    \label{eqn:robin-minimax-est}
      \lam{\Rch}(\dome[\eps],\nue,\Bdparam)
      \le
    \sup_{f\in F_k\setminus\{0\}}
    \Rayleigh{\Rch}(\dome[\eps],\nue,\Bdparam, f)
    \le
   \sup_{f\in F_k\setminus\{0\}}
   \Rayleigh{\Sch}(\mfd,\Vee,\Bdparam,f)
   +\thalf\delta.
  \end{equation}
Combining~\eqref{eqn:schroedinger-minimax-est}
and~\eqref{eqn:robin-minimax-est}, we
conclude~\eqref{eqn:robin-eigenvalue-limsup} as desired.
\end{proof}
As an immediate corollary, we have
\begin{cor}[Uniform boundedness of eigenvalues]
  \label{cor:robin-eigenvalues-bounded}
  Let $\sequence{\dome[\eps]}{\eps>0}$ and
    $\sequence{\nue}{\eps>0}$ satisfy hypotheses {\hypone} and
    {\hypthree}. Then for each $k\in\Npos$,
  there exist constants $\eps_0>0$ and $C_{\mfd,\Vee}>0$ such that for
  all $0<\eps<\eps_0$, we have that
  $\abs{\lam{\Rch}(\dome[\eps],\nue,\Bdparam)}<C_{\mfd,\Vee}$.
\end{cor}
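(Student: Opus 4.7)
The upper bound is immediate from Proposition~\ref{prop:robin-eigenvalues-upper-semicontinuous}: for each fixed $k\in\Npos$ and all sufficiently small $\eps>0$, we have $\lam{\Rch}(\dome[\eps],\nue,\Bdparam)<\lam{\Sch}(\mfd,\Vee,\Bdparam)+1$. For the lower bound, since these eigenvalues are non-decreasing in~$k$, it suffices to bound $\lam[1]{\Rch}(\dome[\eps],\nue,\Bdparam)$ below uniformly in~$\eps$. By the variational characterisation, this reduces to showing that the quadratic form $Q_\eps$ underlying the Rayleigh quotient~\eqref{eqn:intermediate-rayleigh} satisfies a uniform coercivity estimate
\begin{equation}
Q_\eps(f,f)\ge\half\norm[\Lp2(\dome[\eps])]{\grad f}^2-C\norm[\Lp2(\dome[\eps])]{f}^2
\end{equation}
for all $f\in\Wz12(\dome[\eps])$, with $C$ independent of~$\eps$.

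The plan is to extend $f$ to $F=\gext[\eps]f\in\Wz12(\mfd)$ via \hyptwo, which is uniformly bounded in operator norm, and to prove that, for any $\eta>0$, there is $C(\eta)>0$ such that, uniformly in small~$\eps$,
\begin{equation}
\left\lvert\int_{\mfd}F^2\dnue\right\rvert+\left\lvert\int_{\bdmfd}F^2\Bdparam\dA\right\rvert\le\eta\norm[\W12(\mfd)]{F}^2+C(\eta)\norm[\Lp2(\mfd)]{F}^2.
\end{equation}
Granted this interpolation inequality, choosing $\eta$ small relative to the uniform operator norm of~$\gext[\eps]$ allows one to absorb the gradient terms into the leading $\norm[\Lp2(\dome[\eps])]{\grad f}^2$ of $Q_\eps$ and deduce the desired coercivity.

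The boundary term is handled by Ehrling's lemma, exploiting compactness of the trace $\W12(\mfd)\to\Lp2(\bdmfd)$. The principal obstacle is the $\nue$-term: direct application of the Sobolev multiplication theorem (Propositions~\ref{prop:sobolev-multiplication-i} and~\ref{prop:sobolev-multiplication-ii}) yields only $\norm[\Wprod]{F^2}\lesssim\norm[\W12(\mfd)]{F}^2$, which sits at a critical Sobolev exponent where the ambient embedding fails to be compact, so a straightforward Ehrling argument is unavailable. I plan to circumvent this by splitting $\nue=(\nue-\Vee\volume)+\Vee\volume$: by~\hypthree the norm of the first summand in $\dual{\Wprod}$ tends to zero, so its contribution can be made smaller than any prescribed multiple of $\norm[\W12(\mfd)]{F}^2$; for the second summand, admissibility of~$\Vee$ supplies the classical Schr\"odinger form-bound, proved by decomposing $\Vee$ into a piece of small norm in $\Lp{\mfdim/2}(\mfd)$ (respectively, $\LlogLone(\mfd)$ when $\mfdim=2$) plus a bounded remainder, and applying the critical Sobolev embedding to the small piece. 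Combining these ingredients yields the coercivity estimate and hence the uniform lower bound on $\lam[1]{\Rch}$.
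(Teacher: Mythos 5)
Your upper bound is exactly the paper's route: Proposition~\ref{prop:robin-eigenvalues-upper-semicontinuous} immediately gives $\lam{\Rch}(\dome[\eps],\nue,\Bdparam)<\lam{\Sch}(\mfd,\Vee,\Bdparam)+1$ for all small $\eps$. The paper declares the full corollary, including the two-sided bound, ``immediate'' from that proposition and gives no separate argument for the lower bound; your submission supplies one, and it is correct. Reducing to $k=1$ by monotonicity, extending via $\gext[\eps]$, and establishing the uniform relative form-bound by (a) Ehrling for the boundary trace, (b) the $\dual{\Wprod}$-smallness of $\nue-\Vee\volume$ from \hypthree, and (c) the Kato-type decomposition of an admissible $\Vee$ into a small-$\Lp{d/2}$ (resp.\ $\LlogLone$) part plus a bounded part, together with Propositions~\ref{prop:sobolev-multiplication-i}/\ref{prop:sobolev-multiplication-ii}, does yield the uniform lower bound on $\lam[1]{\Rch}$. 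Note, incidentally, that part (c) is not strictly needed in the regime where the paper invokes the corollary: Propositions~\ref{prop:robin-eigenvalues-upper-semicontinuous}--\ref{prop:orthonormality} are all carried out under the standing assumption $\Vee\in\Cn1(\mfd)$ introduced just before Proposition~\ref{prop:robin-eigenvalues-upper-semicontinuous}, so there the $\Vee\volume$ contribution is handled simply via $\|\Vee\|_{\Lp\infty(\mfd)}\|F\|_{\Lp2(\mfd)}^2$; your argument covers the more general admissible case too, which is fine but more than is required at this point in the paper.

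One discrepancy worth flagging: the corollary as stated assumes only \hypone{} and \hypthree, whereas your proof invokes \hyptwo{} through $\gext[\eps]$. This is not a defect in your reasoning so much as a looseness in the paper's statement: to make sense of pairing $\nue\in\dual{\Wprod(\mfd)}$ with $f^2$ for $f\in\Wz12(\dome[\eps])$, and to obtain an $\eps$-uniform estimate, one does need a uniformly bounded extension (or an equivalent device), so \hyptwo{} is de facto in play whenever the corollary is used (e.g.\ Proposition~\ref{prop:norm-bounded} assumes all three). You should simply note explicitly that your lower-bound argument uses \hyptwo.
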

The next proposition establishes that the eigenfunctions
$\sequence{\yuek[\eps]}{\eps>0}$, which we recall are elements of
$\Wz12(\dome[\eps])$ with unit norm in $\Lp2(\dome[\eps])$, can be
extended to a family of functions in $\Wz12(\mfd)$ in a way that is
uniformly bounded for all sufficiently small $\eps>0$. It is from this
family that will be extracted a subsequence in $\Wz12(\mfd)$
converging to a Schr\"odinger eigenfunction.
\begin{prop}[Uniformly bounded extensions of eigenfunctions]
  \label{prop:norm-bounded}
  Let $\sequence{\dome[\eps]}{\eps>0}$ and $\sequence{\nue}{\eps>0}$
  satisfy hypotheses {\hypone}, {\hyptwo}, and {\hypthree}. Then there
  exist constants $\eps_0>0$ and $C_{\mfd,\Vee}>0$ such that for all
  $0<\eps<\eps_0$, we have that
  $\norm[\Wz12(\mfd)]{\gext[\eps]\yuek[\eps]}<C_{\mfd,\Vee}$
\end{prop}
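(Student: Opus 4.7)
The plan is to test the weak equation~\eqref{eqn:weak-intermediate} against the eigenfunction $\yuek[\eps]$ itself and derive a G{\aa}rding-type inequality. Writing $F_\eps=\gext[\eps]\yuek[\eps]\in\Wz12(\mfd)$, which by {\hyptwo} satisfies $\norm[\W12(\mfd)]{F_\eps}\lesssim_{\mfd}\norm[\W12(\dome[\eps])]{\yuek[\eps]}$ uniformly in $\eps$, and using $\norm[\Lp2(\dome[\eps])]{\yuek[\eps]}=1$, I would first record the identity
\begin{equation}
\norm[\Lp2(\dome[\eps])]{\grad\yuek[\eps]}^2
=
\lam{\Rch}(\dome[\eps],\nue,\Bdparam)
-\int_{\dome[\eps]}(\yuek[\eps])^2\dnue
-\int_{\bdmfd}(\yuek[\eps])^2\Bdparam\dA.
\end{equation}
Corollary~\ref{cor:robin-eigenvalues-bounded} handles the eigenvalue, so everything reduces to dominating the two integral terms by $\eta\norm[\W12(\dome[\eps])]{\yuek[\eps]}^2$ plus an $\eps$-independent constant, with $\eta$ arbitrarily small.

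For the potential term, since $\nue$ is supported in $\cl{\dome[\eps]}$, I would rewrite it as the duality pairing $\langle\nue,F_\eps^2\rangle$, noting that the Sobolev multiplication theorems (Propositions~\ref{prop:sobolev-multiplication-i} and~\ref{prop:sobolev-multiplication-ii}) yield $\norm[\Wprod]{F_\eps^2}\lesssim_{\mfd}\norm[\W12(\mfd)]{F_\eps}^2$. Splitting $\nue=\Vee\volume+(\nue-\Vee\volume)$, the residual contributes $\littleo(1)\norm[\W12(\mfd)]{F_\eps}^2$ via {\hypthree}, while the $\Vee$ part would be treated by the standard absorption estimate
\begin{equation}
\bigabs{\int_\mfd F_\eps^2\Vee\dv}
\le
\eta\norm[\W12(\mfd)]{F_\eps}^2
+C_{\mfd,\Vee,\eta}\norm[\Lp2(\mfd)]{F_\eps}^2,
\end{equation}
valid for arbitrary $\eta>0$, which I would prove by decomposing $\Vee$ into a part with small admissible norm and a bounded remainder; the Sobolev embedding $\W12\hookrightarrow\Lp{2\mfdim/(\mfdim-2)}$ when $\mfdim\ge 3$, or the chain $\W1{2,-\half}\hookrightarrow\expLone$ together with Proposition~\ref{prop:expLone-dual-LlogLone} when $\mfdim=2$, converts the small part into an $\eta$-multiple of $\norm[\W12(\mfd)]{F_\eps}^2$. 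A parallel trace-theorem-plus-absorption estimate gives the analogous bound for the Robin boundary term.

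To close, I would bound $\norm[\Lp2(\mfd)]{F_\eps}^2=1+\norm[\Lp2(\holes[\eps])]{F_\eps}^2$ by applying H\"older on $\holes[\eps]$ together with the same Sobolev embedding, using $\Vol(\holes[\eps])\tendsto 0$ from {\hypone} to see that $\norm[\Lp2(\holes[\eps])]{F_\eps}^2\le\littleo(1)\norm[\W12(\mfd)]{F_\eps}^2$. Substituting everything back into the identity, reapplying {\hyptwo}, and expanding $\norm[\W12(\dome[\eps])]{\yuek[\eps]}^2=1+\norm[\Lp2(\dome[\eps])]{\grad\yuek[\eps]}^2$ produces an inequality
\begin{equation}
\norm[\Lp2(\dome[\eps])]{\grad\yuek[\eps]}^2
\le
C_{\mfd,\Vee,\Bdparam,\eta}
+\bigl(C_{\mfd}\eta+\littleo(1)\bigr)\norm[\W12(\dome[\eps])]{\yuek[\eps]}^2
\end{equation}
with $\eta$ at my disposal; choosing $\eta$ small and $\eps$ small forces the coefficient on the right below $\thalf$, which gives the uniform bound on $\norm[\W12(\dome[\eps])]{\yuek[\eps]}$ and hence, by {\hyptwo}, on $\norm[\W12(\mfd)]{F_\eps}$. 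The principal technical obstacle will be executing the absorption cleanly in the borderline two-dimensional case, where the classical Sobolev scale fails and the whole argument must be run through the Orlicz--Sobolev formalism of Propositions~\ref{prop:expL-embed} and~\ref{prop:expLone-dual-LlogLone}.
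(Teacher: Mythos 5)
Your plan is essentially the same as the paper's: test the weak formulation~\eqref{eqn:weak-intermediate} against $\yuek[\eps]$, obtain the identity for $\norm[\Lp2(\dome[\eps])]{\grad\yuek[\eps]}^2$, split off the $\Vee$-part of the potential term, and estimate term by term using Corollary~\ref{cor:robin-eigenvalues-bounded}, {\hypone}, {\hypthree}, the Sobolev multiplication theorems, and a boundary trace bound. Two points of comparison are worth noting.

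First, the paper at this stage of the argument has already reduced to $\Vee\in\Cn1(\mfd)$ (this reduction is announced immediately before Proposition~\ref{prop:robin-eigenvalues-upper-semicontinuous}, with the general admissible case recovered later via Proposition~\ref{prop:schroedinger-approx}). Consequently the term $\int_{\dome[\eps]}(\yuek[\eps])^2\Vee\dv$ is simply bounded by $\norm[\Lp\infty(\mfd)]{\Vee}$ there, and your truncation-of-$\Vee$ absorption step, while valid and more general, is not needed in the paper's route. It would, on the other hand, let you bypass the $\Cn1$ reduction at this particular step, which is a genuine (if modest) gain.

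Second, and more substantively, you are right that the argument is a G{\aa}rding-type bootstrap: the estimates for $\langle\nue-\Vee\volume,\,F_\eps^2\rangle$, the Robin boundary term, and the integral over $\holes[\eps]$ are all \emph{a priori} of the form (small)$\,\cdot\,\norm[\W12(\mfd)]{F_\eps}^2$, which is the quantity under control, so an absorption step (small constant from {\hypthree} and $\Vol(\holes[\eps])\to0$, plus the interpolated trace inequality $\norm[\Lp2(\bd\mfd)]{u}^2\le\eta\norm[\Lp2(\mfd)]{\grad u}^2+C_\eta\norm[\Lp2(\mfd)]{u}^2$ for the $\Bdparam$ term, plus $\norm[\Lp2(\mfd)]{F_\eps}^2=1+\littleo(1)\norm[\W12(\mfd)]{F_\eps}^2$) is required to close the inequality. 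The paper's write-up leaves this absorption entirely implicit --- it lists the three estimates and then says ``substituting these estimates \dots and taking the limit \dots we conclude boundedness,'' with the boundary term in particular phrased as if it were bounded by a constant outright. Your proposal spells out exactly the bookkeeping that makes that conclusion rigorous, which is the right reading of what the paper intends.
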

\begin{proof}
We have by {\hyptwo} the extension operators
$\gext[\eps]:\Wz12(\dome[\eps])\to\Wz12(\mfd)$ which are uniformly
bounded for small $\eps>0$; to show the uniform boundedness of
$\sequence{\gext[\eps]\yuek[\eps]}{\eps>0}$ it therefore suffices to
show that the eigenvalues $\yuek[\eps]\in\Wz12(\dome[\eps])$ are in
fact also bounded uniformly in $\W12(\mfd)$.
For elements lying in $\Wz12(\mfd)$, the $\Wz12(\mfd)$-norm coincides
with the norm inherited from the larger space $\W12(\mfd)$; we compute
with the latter.  Choosing $v=\yuek[\eps]$ in the weak
formulation~\eqref{eqn:weak-intermediate}, we rewrite
  \begin{equation}
    \label{eqn:W12-norm-2}
    \begin{aligned}
      \norm[\Lp2({\dome[\eps]})]{\grad\yuek[\eps]}^2
      &=
      \lam{\Rch}(\dome[\eps],\nue,\Bdparam)\norm[\Lp2({\dome[\eps]})]{\yuek[\eps]}^2
      -
      \int_{\dome[\eps]} (\yuek[\eps])^2\dnue
      -
      \int_{\bdmfd} (\yuek[\eps])^2\Bdparam\dA
      \\
      &=
      \lam{\Rch}(\dome[\eps],\nue,\Bdparam)\norm[\Lp2({\dome[\eps]})]{\yuek[\eps]}^2
      -
      \left(
      \int_{\dome[\eps]}(\yuek[\eps])^2\dnue
      -
      \int_\mfd(\gext[\eps]\yuek[\eps])^2\Vee\dv\right)\\
      &\quad-
      \int_{\bdmfd}(\yuek[\eps])^2\Bdparam\dA
      -
      \int_\mfd(\gext[\eps]\yuek[\eps])^2\Vee\dv\\
      &=
      \lam{\Rch}(\dome[\eps],\nue,\Bdparam)\norm[\Lp2({\dome[\eps]})]{\yuek[\eps]}^2
      -
      \left(
      \int_\mfd(\gext[\eps]\yuek[\eps])^2\dnue
      -
      \int_\mfd(\gext[\eps]\yuek[\eps])^2\Vee\dv
      \right)
      \\
      &\quad-
      \int_{\bdmfd}(\yuek[\eps])^2\Bdparam\dA
      -
      \int_{\dome[\eps]}(\yuek[\eps])^2\Vee\dv
      -
      \int_{\holes[\eps]}(\gext[\eps]\yuek[\eps])^2\Vee\dv.
    \end{aligned}
  \end{equation}
  We estimate each of the terms in this sum individually:
  \begin{itemize}
  \item
    $\lam{\Rch}(\dome[\eps],\nue,\Bdparam)\norm[\Lp2({\dome[\eps]})]{\yuek[\eps]}^2\le
    C_{\mfd,\Vee}$, since the eigenfunctions $\yuek[\eps]$ are
    normalised in $\Lp2(\dome)$, and the eigenvalues are bounded by
    Corollary~\ref{cor:robin-eigenvalues-bounded}.
  \item
    When $\mfdim\ge3$, we have that $(\gext[\eps]\yuek[\eps])^2$ is in
    $\W1{\frac{\mfdim}{\mfdim-1}}(\mfd)$ by
    Proposition~\ref{prop:sobolev-multiplication-i}; when $\mfdim=2$
    we have that it is in $\W1{2,-\half}(\mfd)$ by
    Proposition~\ref{prop:sobolev-multiplication-ii}. In either case,
    the term in parentheses vanishes by {\hypthree}.
  \item
    $\int_{\bdmfd}
    (\yuek[\eps])^2\Bdparam\dA\lesssim\norm[\Lp\infty(\bdmfd)]{\Bdparam}$,
    by the trace theorem for bounded domains with Lipschitz
    boundary.
  \item     
    Choose $q$ such that $2\le\mfdim<q<\infty$. This implies that
    $\frac{2q}{q-2}\le\frac{2d}{d-2}$, so we have the Sobolev
    embedding $\W12(\mfd)\to\Lp{\frac{2q}{q-2}}$; applying the
    generalised H\"older inequality with exponents
    $(\frac{2q}{q-2},\frac{2q}{q-2},\frac{q}{2})$ yields
    \begin{equation}
      \label{eqn:W12-norm-3}
      \begin{aligned}
        \int_{\holes[\eps]}(\gext[\eps]\yuek[\eps])^2\Vee\dv
        &\le
        \norm[\Lp{\frac{2q}{q-2}}(\mfd)]{\gext[\eps]\yuek[\eps]}^2
        \norm[\Lp{\frac{q}{2}}(\mfd)]{\Vee\chf{\holes[\eps]}}\\
        &\lesssim_{\mfd}
        \norm{\gext[\eps]}^2\norm[\W{1}{2}({\dome[\eps]})]{\yuek[\eps]}^2
        \norm[\Lp\infty(\mfd)]{\Vee}
        \Vol(\holes[\eps])^{\frac{2}{q}}
      \end{aligned}
    \end{equation}
    which vanishes in the limit $\eps\tendsto0$ because $\Vol(\holes[\eps])$ does, by {\hypone}.
  \end{itemize}
  substituting these estimates into~\eqref{eqn:W12-norm-2} and taking
  the limit $\eps\tendsto0$, we conclude that
  $\norm[\W{1}{2}(\mfd)]{\gext[\eps]\yuek[\eps]}$ is bounded
  uniformly over all sufficiently small $\eps>0$, as desired.
\end{proof}
From this point on, let us write
$\Yuek[\eps]=\gext[\eps]\yuek[\eps]\in\W12(\mfd)$. By the
Banach--Alaoglu theorem, we can extract a weakly convergent sequence
from the norm-bounded family
$\sequence{\yuek[\eps]}{\eps\in(0,\eps_0)}$ along which
$\eps\tendsto0$.  Using Corollary~\ref{cor:robin-eigenvalues-bounded},
we can furthermore replace this sequence with a subsequence along
which the corresponding Robin eigenvalues also converge. That is, we
have deduced
\begin{cor}[Weak convergence of eigenfunctions]
  \label{cor:weak-convergence}
  Let $\sequence{\dome[\eps]}{\eps>0}$ and $\sequence{\nue}{\eps>0}$
  satisfy hypotheses {\hypone}, {\hyptwo}, and {\hypthree}. Then for
  each $k\in\Npos$, there exists a sequence chosen from
  $\sequence{\yuek[\eps]}{\eps>0}$ along which $\eps\tendsto0$, such
  that
  \begin{itemize}
  \item
    $\Yuek[\eps]\tendsto\Yuuk$ in the weak topology on $\Wz12(\mfd)$,
    and
  \item
    $\lam{\Rch}(\dome[\eps],\nue,\Bdparam)\tendsto\laam$ in $\R$
  \end{itemize}
  for some $\Yuuk\in\Wz12(\mfd)$ and $\laam\in\R$.
\end{cor}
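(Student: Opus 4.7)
The proof is essentially an abstract extraction argument from the two uniform bounds already in hand, and I would carry it out in three quick steps.

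First, I would invoke Proposition~\ref{prop:norm-bounded} to obtain $\eps_0 > 0$ and $C_{\mfd,\Vee} > 0$ so that $\norm[\W12(\mfd)]{\Yuek[\eps]} < C_{\mfd,\Vee}$ for all $\eps < \eps_0$. Since $\W12(\mfd)$ is a Hilbert space, its norm-closed subspace $\Wz12(\mfd)$ is reflexive and separable, so bounded sequences admit weakly convergent subsequences. Applying this to $\sequence{\Yuek[\eps]}{\eps \in (0,\eps_0)}$ yields a sequence $\eps_n \tendsto 0$ along which $\Yuek[\eps_n] \tendsto \Yuuk$ weakly in $\Wz12(\mfd)$ for some $\Yuuk$. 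To see that the weak limit lies in $\Wz12(\mfd)$ and not merely in $\W12(\mfd)$, I would appeal to the fact that $\Wz12(\mfd)$ is norm-closed in $\W12(\mfd)$ by definition, hence also weakly closed by Mazur's theorem.

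Second, by Corollary~\ref{cor:robin-eigenvalues-bounded}, the real numbers $\lam{\Rch}(\dome[\eps_n],\nue[\eps_n],\Bdparam)$ form a bounded sequence in~$\R$, so Bolzano--Weierstrass supplies a further subsequence along which these eigenvalues converge to some $\laam \in \R$. After passing to this subsequence and relabeling $\eps_n$ accordingly, both convergences hold simultaneously, which is the conclusion of the corollary.

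There is no real obstacle: the content of the statement is entirely concentrated in Proposition~\ref{prop:norm-bounded} and Corollary~\ref{cor:robin-eigenvalues-bounded}, and the corollary records the straightforward consequence of combining reflexivity/Banach--Alaoglu with Bolzano--Weierstrass. The only point that warrants a brief mention in the write-up is the preservation of the boundary-vanishing condition under weak limits, handled by the Mazur argument above.
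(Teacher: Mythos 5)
Your proposal is correct and follows essentially the same route as the paper: extract a weakly convergent subsequence from the norm-bounded family $\sequence{\Yuek[\eps]}{\eps\in(0,\eps_0)}$ (Proposition~\ref{prop:norm-bounded} plus weak sequential compactness of bounded sets in the Hilbert space $\Wz12(\mfd)$; the paper cites Banach--Alaoglu), then pass to a further subsequence along which the bounded eigenvalues converge (Corollary~\ref{cor:robin-eigenvalues-bounded} plus Bolzano--Weierstrass). Your Mazur-theorem remark about the weak limit remaining in $\Wz12(\mfd)$ is a valid and slightly more careful touch that the paper leaves implicit.
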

Henceforth when we speak of taking the limit $\eps\tendsto0$, let it
always be along the sequence whose existence is established by
Corollary~\ref{cor:weak-convergence}.
We next show that the weak limit $\Yuuk\in\Wz12(\mfd)$ is in fact a
Schr\"odinger eigenfunction with corresponding eigenvalue $\laam$.
\begin{prop}[The equation satisfied in the limit]
  \label{prop:equation-in-limit}
  The weak limit $\Yuuk$ of eigenfunctions and the limit $\laam$ of
  eigenvalues satisfy~\eqref{eqn:weak-schroedinger}:
  that is, for all $v\in\Wz12(\mfd)$, we have that
  \begin{equation}
    \label{eqn:equation-in-limit}
    \int_{\mfd}{\grad\Yuuk}\cdot{\grad v}
    +
    \int_\mfd \Vee\Yuuk v\dv
    +
    \int_{\bdmfd}\Yuuk v\Bdparam\dA
    =
    \laam
    \int_{\mfd}\Yuuk v.
  \end{equation}
\end{prop}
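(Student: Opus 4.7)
The plan is to pass to the limit $\eps\tendsto 0$ in the weak formulation \eqref{eqn:weak-intermediate} satisfied by $\yuek[\eps]$, using an arbitrary $v\in\Wz12(\mfd)$ as the test function. Since $v$ vanishes on $\zbd\subseteq\bd\mfd\subseteq\bd\dome[\eps]$, its restriction to $\dome[\eps]$ lies in $\Wz12(\dome[\eps])$ and is admissible; using $\Yuek[\eps]\at{\dome[\eps]}=\yuek[\eps]$ together with the fact that $\nue$ is supported in $\cl{\dome[\eps]}$, each integral over $\dome[\eps]$ rewrites as the corresponding integral over $\mfd$ minus an integral over $\holes[\eps]$, and the task becomes that of passing to the limit term by term in the resulting identity.

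The gradient, interior $\Lp2$, and boundary contributions will be treated by standard tools. The piece $\int_\mfd\grad\Yuek[\eps]\cdot\grad v\dv$ converges to $\int_\mfd\grad\Yuuk\cdot\grad v\dv$ directly from the weak convergence of $\Yuek[\eps]$ in $\Wz12(\mfd)$, while its correction $\int_{\holes[\eps]}\grad\Yuek[\eps]\cdot\grad v\dv$ is controlled in absolute value by $\norm[{\Lp2(\mfd)}]{\grad\Yuek[\eps]}\,\norm[{\Lp2(\holes[\eps])}]{\grad v}$, which vanishes because $\Vol(\holes[\eps])\tendsto 0$ by \hypone. The right-hand side of \eqref{eqn:weak-intermediate} is handled analogously, using Rellich--Kondrachov to upgrade the weak convergence to strong convergence of $\Yuek[\eps]$ in $\Lp2(\mfd)$, combined with the eigenvalue convergence from Corollary~\ref{cor:weak-convergence}. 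The boundary term $\int_{\bdmfd}\yuek[\eps] v\Bdparam\dA$ equals $\int_{\bdmfd}\Yuek[\eps] v\Bdparam\dA$ since $\bd\holes[\eps]$ is disjoint from $\bd\mfd$ by \hypone, and passes to the limit by compactness of the trace $\W12(\mfd)\tendsto\Lp2(\bd\mfd)$.

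The main obstacle is the measure term $\int_{\dome[\eps]}\yuek[\eps] v\dnue$, whose treatment is where \hypthree enters. Using the support property of $\nue$, I rewrite it as $\int_\mfd\Yuek[\eps] v\dnue$. By the Sobolev multiplication theorem (Proposition~\ref{prop:sobolev-multiplication-i} if $\mfdim\ge 3$, Proposition~\ref{prop:sobolev-multiplication-ii} if $\mfdim=2$), multiplication by the fixed function $v$ defines a bounded linear map $\W12(\mfd)\tendsto\Wprod$, which is therefore weakly continuous; this yields $\Yuek[\eps] v\rightharpoonup\Yuuk v$ weakly in $\Wprod$, and in particular $\sup_\eps\norm[\Wprod]{\Yuek[\eps] v}<\infty$. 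I then decompose
\begin{equation}
\int_\mfd\Yuek[\eps] v\dnue - \int_\mfd\Yuuk v\Vee\dv
= \langle\nue-\Vee\volume,\Yuek[\eps] v\rangle
+ \langle\Vee\volume,\Yuek[\eps] v-\Yuuk v\rangle,
\end{equation}
where the brackets denote the duality pairing between $\dual{\Wprod}$ and $\Wprod$; note that $\Vee\volume\in\dual{\Wprod}$ by admissibility of $\Vee$ together with Propositions~\ref{prop:expLone-dual-LlogLone} and~\ref{prop:expL-embed} (for $\mfdim=2$) or Hölder's inequality combined with the Sobolev embedding (for $\mfdim\ge 3$). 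The first summand vanishes because $\nue-\Vee\volume\tendsto 0$ in $\dual{\Wprod}$ by \hypthree while $\norm[\Wprod]{\Yuek[\eps] v}$ stays bounded; the second vanishes by weak convergence of $\Yuek[\eps] v$ tested against the fixed functional $\Vee\volume$. Collecting all limits yields \eqref{eqn:equation-in-limit}.
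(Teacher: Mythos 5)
Your proof is correct and follows essentially the same outline as the paper's: substitute into the weak formulation with an admissible test function $v$, rewrite $\dome[\eps]$-integrals as $\mfd$-integrals minus corrections over $\holes[\eps]$, and pass to the limit term by term using \hypone, \hyptwo, \hypthree, and Corollary~\ref{cor:weak-convergence}. The main difference is one of exposition rather than approach: the paper's proof first restricts to $v\in\Cn\infty(\mfd)$ and simply asserts that $\int_{\dome[\eps]}\yuek[\eps]v\dnue\tendsto\int_\mfd\Yuuk v\Vee\dv$ follows from weak convergence plus \hypthree, without unpacking the mechanism. You make the mechanism explicit — boundedness of multiplication by $v$ as a map $\W12(\mfd)\to\Wprod$ (via Propositions~\ref{prop:sobolev-multiplication-i} or~\ref{prop:sobolev-multiplication-ii}) gives $\Yuek[\eps]v\rightharpoonup\Yuuk v$ weakly in $\Wprod$ with bounded norms, and the splitting
\begin{equation}
\langle\nue,\Yuek[\eps]v\rangle-\langle\Vee\volume,\Yuuk v\rangle
=\langle\nue-\Vee\volume,\Yuek[\eps]v\rangle+\langle\Vee\volume,\Yuek[\eps]v-\Yuuk v\rangle
\end{equation}
lets the first term vanish by strong convergence in $\dual{\Wprod}$ against a norm-bounded sequence, and the second by weak convergence against the fixed functional $\Vee\volume\in\dual{\Wprod}$. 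This is precisely what the paper's one-line deduction needs, so your proof fills in a detail the paper leaves implicit. Your treatment of the hole corrections via absolute continuity of $\int|\grad v|^2$ and $\int v^2$ over $\holes[\eps]$ also works directly for arbitrary $v\in\Wz12(\mfd)$, sidestepping the density reduction the paper uses to invoke an $\Lp\infty$ bound on $v$. Both are fine; no gaps.
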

\begin{proof}
  It suffices to show this for $v\in\Cn{\infty}(\mfd)$.  Using that
  $\yuek[\eps]$ is a solution to~\eqref{eqn:weak-intermediate}, we
  begin by rewriting
  \begin{equation}
    \label{eqn:equation-in-limit-1}
    \begin{aligned}
      \int_{\mfd}\grad\Yuek[\eps]\cdot\grad v\dv
        &=
        \int_{\dome[\eps]}\grad\Yuek[\eps]\cdot\grad v\dv
        +
        \int_{\holes[\eps]}\grad\Yuek[\eps]\cdot\grad v\dv
      \\
      &=
      \left(
      \lam{\Rch}(\dome[\eps],\nue,\Bdparam)
      \int_{\dome[\eps]}\Yuek[\eps]v\dv
      -
      \int_{\dome[\eps]}\Yuek[\eps]v\dnue
      -
      \int_{\bdmfd}\yuek[\eps]v\Bdparam\dA
      \right)\\
      &\quad+
        \int_{\holes[\eps]}\grad\Yuek[\eps]\cdot\grad v\dv\\
      &=
        \lam{\Rch}(\dome[\eps],\nue,\Bdparam)
        \int_{\mfd}\Yuek[\eps]v\dv
      -
      \lam{\Rch}(\dome[\eps],\nue,\Bdparam)
      \int_{\holes[\eps]}\Yuek[\eps]v\dv\\
      &\quad-
      \int_{\dome[\eps]}\Yuek[\eps]\dnue
      -
      \int_{\bdmfd}\yuek[\eps] v\Bdparam\dA
      +
        \int_{\holes[\eps]}\grad\Yuek[\eps]\cdot\grad v\dv
    \end{aligned}
  \end{equation}
  in which, in the limit as $\eps\tendsto0$, we have the following:
  \begin{itemize}
  \item
    $\lam{\Rch}(\dome[\eps],\nue,\Bdparam)$ is bounded, uniformly over
    small $\eps>0$, by
    Proposition~\ref{cor:robin-eigenvalues-bounded}.
    \item
    An application of the generalised H\"older
  inequality gives us that
  \begin{equation}
    \label{eqn:holder-3-1}
    \int_{\holes[\eps]}\Yuek[\eps]v\dv
      \le
      \norm[\Lp 2(\mfd)]{\yuek[\eps]}
      \norm[\Lp \infty(\mfd)]{v}
      \Vol(\holes[\eps])^\half
  \end{equation}
  which vanishes in the limit, because
  $\norm[\Lp2(\mfd)]{\Yuek[\eps]}\le\norm[\W12(\mfd)]{\Yuek[\eps]}$ is
  bounded by Proposition~\ref{prop:norm-bounded} and
  $\Vol(\holes[\eps])\tendsto0$ by {\hypone}.
  \item
  A computation identical to~\eqref{eqn:holder-3-1} except with
  $\grad\yuek[\eps]$ and $\grad v$ in place of $\yuek[\eps]$ and $v$
  gives us also that
  \begin{equation}
    \label{eqn:holder-3-2}
    \int_{\holes[\eps]}\grad \Yuek[\eps]\cdot\grad v\dv\tendsto0.
  \end{equation}
  \item
    From the weak convergence~$\Yuek[\eps]\to\Yuuk$, which is
    Corollary~\ref{cor:weak-convergence}, and the convergence of
    measures $\nue\tendsto\Vee\volume$, which is {\hypthree}, we
    deduce
  \begin{equation}
    \label{eqn:equation-in-limit-measures}
    \int_{\dome[\eps]}\yuek[\eps] v\dnue
    \tendsto
    \int_\mfd\Yuuk v\Vee\dv.
  \end{equation}
  \item
  The weak convergence $\yuek[\eps]\to\Yuuk$ in $\W12(\mfd)$ implies
  strong convergence in $\Lp2(\mfd)$, hence
  \begin{equation}
    \int_\mfd\Yuek[\eps]v\dv\tendsto\int_\mfd\Yuuk[\eps]v\dv.
  \end{equation}
\end{itemize}
  Substituting all of this
  into~\eqref{eqn:equation-in-limit-1}, we
  recover~\eqref{eqn:equation-in-limit} as desired.
\end{proof}
Proposition~\ref{prop:equation-in-limit} shows that the limit $\laam$
of eigenvalues is a Schr\"odinger eigenvalue with corresponding
eigenfunction $\Yuuk.$ Next, we show moreover that orthonormality is
preserved in the limit, that is, that $\sequence{\Yuuk}{k\in\Npos}$ is
an orthonormal family in $\Lp2(\mfd)$. A consequence of this is that
the subspace of $\W12(\mfd)$ spanned by $\sequence{\Yuuk[i]}{i=1\dots
  k}$ is $k$-dimensional for each $k\in\Npos$; an induction on
dimension in the variational characterisaion then shows that
\emph{every} Schr\"odinger eigenvalue occurs as a limit in this way,
that is, that
$\lam{\Rch}(\dome[\eps],\nue,\Bdparam)=\lam{\Sch}(\mfd,\Vee,\Bdparam)$
for each $k\in\Npos$, and that (up to relabelling eigenfunctions for
the same eigenvalue with multiplicity greater than one) $\Yuuk$ is the
$k$-th Schr\"odinger eigenfunction.
\begin{prop}[Orthonormality in the limit]
  \label{prop:orthonormality}
    Let $\sequence{\dome[\eps]}{\eps>0}$ and
    $\sequence{\nue}{\eps>0}$ satisfy hypotheses {\hypone}, {\hyptwo}, and
    {\hypthree}. Then
  \begin{equation}
    \label{eqn:orthonormality}
    \idot[\Lp2(\mfd)]{\Yuuk[i]}{\Yuuk[j]}
    =
    \lim_{\eps\tendsto0}\idot[\Lp2(\mfd)]{\Yue[\eps]_i}{\Yue[\eps]_j}
    =
    \delta_{ij}
  \end{equation}
  in which $\delta_{ij}$ is the Kronecker delta.
\end{prop}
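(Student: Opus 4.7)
The plan is to justify the two equalities in~\eqref{eqn:orthonormality} in turn. For the first, I would note that by Corollary~\ref{cor:weak-convergence} we have $\Yuek[\eps]_i\tendsto\Yuuk[i]$ weakly in $\Wz12(\mfd)$ as $\eps\tendsto 0$, and since $\mfd$ is a compact Riemannian manifold with Lipschitz boundary the embedding $\W12(\mfd)\hookrightarrow\Lp2(\mfd)$ is compact. Consequently $\Yuek[\eps]_i\tendsto\Yuuk[i]$ strongly in $\Lp2(\mfd)$, and the inner product $\idot[\Lp2(\mfd)]{\Yuek[\eps]_i}{\Yuek[\eps]_j}$ converges to $\idot[\Lp2(\mfd)]{\Yuuk[i]}{\Yuuk[j]}$.

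For the second equality, I would exploit the orthonormality of the $\yuek[\eps]$ on the perforated domain: since $\Yuek[\eps]=\gext[\eps]\yuek[\eps]$ agrees with $\yuek[\eps]$ on $\dome[\eps]$,
\begin{equation}
  \int_{\mfd}\Yuek[\eps]_i\Yuek[\eps]_j\dv
  =
  \int_{\dome[\eps]}\yuek[\eps]_i\yuek[\eps]_j\dv
  +
  \int_{\holes[\eps]}\Yuek[\eps]_i\Yuek[\eps]_j\dv
  =
  \delta_{ij}
  +
  \int_{\holes[\eps]}\Yuek[\eps]_i\Yuek[\eps]_j\dv,
\end{equation}
so the claim reduces to showing that the error term over the holes vanishes as $\eps\tendsto 0$.

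The key step is a Hölder/Sobolev estimate of the same flavour as~\eqref{eqn:W12-norm-3} in the proof of Proposition~\ref{prop:norm-bounded}. For $\mfdim\ge 3$ pick any $1<q<\tfrac{\mfdim}{\mfdim-2}$, so that $\W12(\mfd)\hookrightarrow\Lp{2q}(\mfd)$ continuously; for $\mfdim=2$ any $q<\infty$ will do. Applying the Cauchy--Schwarz and generalised Hölder inequalities gives
\begin{equation}
  \bigabs{\int_{\holes[\eps]}\Yuek[\eps]_i\Yuek[\eps]_j\dv}
  \le
  \norm[\Lp{2q}(\mfd)]{\Yuek[\eps]_i}
  \norm[\Lp{2q}(\mfd)]{\Yuek[\eps]_j}
  \Vol(\holes[\eps])^{\frac{q-1}{q}}
  \lesssim_{\mfd}
  \norm[\W12(\mfd)]{\Yuek[\eps]_i}
  \norm[\W12(\mfd)]{\Yuek[\eps]_j}
  \Vol(\holes[\eps])^{\frac{q-1}{q}}.
\end{equation}
The two Sobolev norms are bounded uniformly in $\eps$ by Proposition~\ref{prop:norm-bounded}, and $\Vol(\holes[\eps])\tendsto 0$ by~\hypone. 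This drives the right-hand side to zero, which gives the second equality and completes the proof.

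I do not expect any serious obstacle: the argument is entirely parallel to the tail-term estimates already used in Proposition~\ref{prop:norm-bounded} and Proposition~\ref{prop:equation-in-limit}. The only subtlety is ensuring a strict improvement over $\Lp2$ in the Sobolev embedding so that a positive power of $\Vol(\holes[\eps])$ is actually available, which is why we choose $q>1$ rather than $q=1$.
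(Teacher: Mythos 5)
Your proposal is correct and essentially identical to the paper's proof: both establish the first equality via weak $\W12$ convergence implying strong $\Lp2$ convergence, reduce the second to showing the integral over $\holes[\eps]$ vanishes, and bound that integral by a generalised Hölder estimate combined with the uniform $\W12(\mfd)$-bound from Proposition~\ref{prop:norm-bounded} and $\Vol(\holes[\eps])\tendsto0$ from {\hypone}. The only difference is a cosmetic reparametrisation of the Hölder exponents (your $2q$ corresponds to the paper's $\tfrac{2q}{q-2}$).
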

\begin{proof}
  For $k\in\Npos$, $\Yuek[\eps]\tendsto\Yuuk$ weakly in $\W12(\mfd)$
  and therefore strongly in $\Lp2(\mfd)$, by
  Proposition~\ref{cor:weak-convergence}. From this follows the first
  equality in~\eqref{eqn:orthonormality}, and it remains to prove the
  second.

  For $i,j\in\Npos$, we have that
  \begin{equation}
    \label{eqn:idot}
    \idot[\Lp2(\mfd)]{\Yue[\eps]_i}{\Yue[\eps]_j}
    =
    \int_{\dome[\eps]}
    \Yue[\eps]_i\Yue[\eps]_j\dv
    +
    \int_{\holes[\eps]}
    \Yue[\eps]_i\Yue[\eps]_j\dv
    =
    \delta_{ij}
    +
    \int_{\holes[\eps]}
    \Yue[\eps]_i\Yue[\eps]_j\dv
  \end{equation}
  using the orthonormality of the Robin eigenfunctions $\yue[\eps]_i$,
  $\yue[\eps]_j$. To see that the remaining integral over
  $\holes[\eps]$ vanishes in the limit, choose $\mfdim<q<\infty$;
  applying the generalised H\"older's inequality in the same way as
  in~\eqref{eqn:W12-norm-3} yields
  \begin{equation}
    \label{eqn:idot-2}
    \begin{aligned}
      \int_{\holes}
      \Yue[\eps]_i\Yue[\eps]_j
      \dv
      &\le
      \norm[\Lp{\frac{2q}{q-2}}(\mfd)]{\Yue[\eps]_i}
      \norm[\Lp{\frac{2q}{q-2}}(\mfd)]{\Yue[\eps]_i}
      \norm[\Lp{\frac{q}{2}}(\mfd)]{\chf{\holes[\eps]}}\\
      &\lesssim_\mfd
      \norm[\W12(\mfd)]{\Yue[\eps]_i}
      \norm[\W12(\mfd)]{\Yue[\eps]_j}
      \Vol(\holes[\eps])^{\frac{2}{q}}
    \end{aligned}
  \end{equation}
  in which the $\W12(\mfd)$-norms of $\Yue[\eps]_i$, $\Yue[\eps]_j$
  are bounded by Proposition~\ref{prop:norm-bounded}, and
  $\Vol(\holes[\eps])\tendsto0$ by {\hypone}. This finishes
  the proof.
\end{proof}
Propositions~\ref{prop:equation-in-limit}
and~\ref{prop:orthonormality} together establish
Proposition~\ref{prop:main-intermediate} in the case of
$\Vee\in\Cn1(\mfd)$.

To finally recover Proposition~\ref{prop:main-intermediate} fully as
stated for any admissible potential $\Vee$, we use a diagonalisation
argument to approach $\Vee$ along a sequence of potentials in
$\Cn1(\mfd)$.
\begin{prop}[Approximating an admissible potential]
  \label{prop:schroedinger-approx}
  Let $\mfd$ be a Riemannian manifold of dimension $\mfdim\ge2$. Let
  $\Vee$ be an admissible potential on $\mfd$, and let
  $\Bdparam\in\Lp\infty(\bdmfd)$.

  Then there exists a sequence $\sequence{\Vee_n}{n\in\N}$ of
  potentials $\Vee_n\in\Cn1(\mfd)$ converging to $\Vee$ in the space
  of admissible potentials such that as $n\tendsto\infty$, we have
  that
  \begin{itemize}
  \item the Schr\"odinger eigenvalues $\lam{\Sch}(\mfd,\Vee_n,
    \Bdparam)$ tend to
    $\lam{\Sch}(\mfd,\Vee,\Bdparam)$, and
  \item the corresponding eigenfunctions
    $\yuk{\Sch}(\mfd,\Vee_n,\Bdparam)$ tend to $\yuk{\Sch}(\mfd,\Vee,\Bdparam)$ in $\Wz12(\mfd)$.
  \end{itemize}
\end{prop}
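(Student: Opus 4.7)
The plan is to adapt the sequence of arguments from Propositions~\ref{prop:robin-eigenvalues-upper-semicontinuous}--\ref{prop:orthonormality} to the present setting, in which only the potential varies while the ambient manifold remains fixed as $\mfd$. First I would invoke density of $\Cn1(\mfd)$ in $\Lp{\mfdim/2}(\mfd)$ when $\mfdim\ge 3$, and in $\LlogLone(\mfd)$ when $\mfdim=2$, to pick $\Vee_n\in\Cn1(\mfd)$ converging to $\Vee$ in the admissible-potential norm.

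Next I would show that for every $f\in\W12(\mfd)$ the quantity $\int_\mfd f^2(\Vee_n-\Vee)\dv$ tends to $0$. When $\mfdim\ge 3$, $f^2\in\W1{\mfdim/(\mfdim-1)}(\mfd)$ by Proposition~\ref{prop:sobolev-multiplication-i} and hence lies in $\Lp{\mfdim/(\mfdim-2)}(\mfd)$ by Sobolev embedding; H\"older-pairing against $\Lp{\mfdim/2}$ supplies the bound. When $\mfdim=2$, $f^2\in\W1{2,-\half}(\mfd)\hookrightarrow\expLone(\mfd)$ by Propositions~\ref{prop:sobolev-multiplication-ii} and~\ref{prop:expL-embed}, and pairing against $\LlogLone(\mfd)$ is permissible by Proposition~\ref{prop:expLone-dual-LlogLone}. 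In either case the estimate is uniform over $\W12$-bounded sets, so $\Rayleigh{\Sch}(\mfd,\Vee_n,\Bdparam,f)\tendsto\Rayleigh{\Sch}(\mfd,\Vee,\Bdparam,f)$ uniformly on the unit sphere of any finite-dimensional subspace of $\W12(\mfd)$. Applying this to a nearly extremal subspace in the variational characterisation of $\lam{\Sch}(\mfd,\Vee,\Bdparam)$, as in Proposition~\ref{prop:robin-eigenvalues-upper-semicontinuous}, yields $\limsup_n\lam{\Sch}(\mfd,\Vee_n,\Bdparam)\le\lam{\Sch}(\mfd,\Vee,\Bdparam)$.

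For the matching lower bound and for eigenfunction convergence, I would run the arguments of Propositions~\ref{prop:norm-bounded}--\ref{prop:orthonormality} on the fixed manifold $\mfd$. Setting $u_k^n=\yuk{\Sch}(\mfd,\Vee_n,\Bdparam)$ and testing the weak equation against $u_k^n$ itself, the uniform eigenvalue bound and the pairing estimate above show that $\sequence{u_k^n}{n\in\N}$ is bounded in $\Wz12(\mfd)$. Extract a weakly convergent subsequence with limit $\Yuuk$ along which the eigenvalues also converge to some $\laam\in\R$. The analogue of Proposition~\ref{prop:equation-in-limit}---using that $\Vee_n\dv\tendsto\Vee\dv$ in the dual of the Sobolev-multiplication space, just as~\hypthree does---shows that $\Yuuk$ is a weak solution of~\eqref{eqn:weak-schroedinger} with eigenvalue $\laam$. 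Orthonormality is preserved in the limit as in Proposition~\ref{prop:orthonormality}, here simplified by the absence of any hole contribution. An induction on $k$ then identifies $\laam=\lam{\Sch}(\mfd,\Vee,\Bdparam)$ and $\Yuuk=\yuk{\Sch}(\mfd,\Vee,\Bdparam)$.

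Finally, to upgrade the weak convergence of eigenfunctions to strong convergence in $\Wz12(\mfd)$, I would rewrite
\begin{equation}
\int_\mfd\abs{\grad u_k^n}^2\dv=\lam{\Sch}(\mfd,\Vee_n,\Bdparam)-\int_\mfd(u_k^n)^2\Vee_n\dv-\int_{\bdmfd}(u_k^n)^2\Bdparam\dA
\end{equation}
and observe that each right-hand term converges to the corresponding quantity for $\Yuuk$: the first two by eigenvalue convergence and the pairing estimate (now applied with $u_k^n$ in place of $f$, using the uniform $\W12$-bound), the third by continuity of the trace $\W12(\mfd)\to\Lp2(\bdmfd)$ together with the strong $\Lp2$-convergence of $u_k^n$. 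Norm convergence of the gradients together with weak convergence delivers strong convergence in $\Wz12(\mfd)$. The main obstacle throughout is the control of the potential integral in the critical case $\mfdim=2$; once the Orlicz--Sobolev duality of Propositions~\ref{prop:expLone-dual-LlogLone}--\ref{prop:expL-embed} is deployed, the remainder is a direct transposition of the arguments of Section~\ref{ssec:problem-in-limit} stripped of the complications introduced by the holes.
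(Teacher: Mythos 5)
Your proposal is correct, but it takes a genuinely different route from the paper. The paper proves norm-resolvent convergence $\schop_n\to\schop$ directly: it extends the resolvent to a continuous map $\dual{\W12(\mfd)}\to\Wz12(\mfd)$ for a suitably chosen $\lambda\in\C\setminus\R$, bounds the operator of multiplication by $\Vee_n-\Vee$ from $\W12(\mfd)$ to $\dual{\W12(\mfd)}$ by $\norm[\Lp{\mfdim/2}(\mfd)]{\Vee_n-\Vee}$ (resp. the $\LlogLone$-norm when $\mfdim=2$), and concludes via the second resolvent identity; spectral convergence and strong $\Wz12$-convergence of eigenfunctions then follow for free. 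You instead re-run the entire variational machinery of Section~\ref{ssec:problem-in-limit} (min--max upper bound, uniform $\Wz12$-bound, weak compactness, identifying the weak limit, orthonormality, Rayleigh-quotient bootstrap to strong convergence) in the no-holes setting. Both are sound, and the Sobolev/Orlicz duality pairing you isolate is exactly the operator-norm estimate the paper feeds into the resolvent identity, so the analytic core is the same; the resolvent proof is shorter and gives all eigenvalues and eigenfunctions in one stroke, while yours is more pedestrian but uses only tools already set up. One small gap you gloss over: obtaining the uniform $\Wz12$-bound on $u_k^n$ (and likewise passing to the limit in $\int(u_k^n)^2\Vee\dv$ for the strong-convergence upgrade) is \emph{not} immediate from the pairing estimate for $\Vee_n-\Vee$ alone, because the fixed potential $\Vee$ is not small; you must also split $\Vee$ into a bounded piece and a piece of small $\Lp{\mfdim/2}$-norm (a Chebyshev-type decomposition, which the paper makes explicit around~\eqref{eqn:chebyshev}) and absorb the small piece into the gradient term. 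Without saying this, the claimed self-bounding inequality for $\norm[\Wz12(\mfd)]{u_k^n}$ does not close.
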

\begin{proof}
  We do this by showing that the corresponding Schr\"odinger operators
  converge in the norm-resolvent sense. Let us first present the
  proof in the case $\mfdim\ge3$.

  By density, choose a sequence $\sequence{\Vee_n}{n\in\N}$ in
  $\Cn1(\mfd)$ converging to $\Vee$ in $\Lp{\frac{\mfdim}{2}}(\mfd)$.
  Define now the sesquilinear form
  $\schf:\Wz12(\mfd)\times\Wz12(\mfd)\to\R$ by
  \begin{equation}
    \label{eqn:schf-dfn}
    \schf(u, v)=
    \int_{\mfd}\grad u\cdot \grad v\dv
    +
    \int_{\mfd}uv\Vee\dv
    +
    \int_{\bdmfd}uv\Bdparam\dA
  \end{equation}
  for $u,v\in\Wz12(\mfd)$. To see that $\schf$ is
  well-defined, verify that
  \begin{equation}
    \label{eqn:schf-L1}
    \int_\mfd uv\Vee\dv
    \le
    \norm[\Lp{\frac{\mfdim}{2}}(\mfd)]{\Vee}
    \norm[\Lp{\frac{\mfdim}{\mfdim-2}}(\mfd)]{uv}
    \lesssim_{\mfd}
    \norm[\Lp{\frac{\mfdim}{2}}(\mfd)]{\Vee}
    \norm[\W12(\mfd)]{u}
    \norm[\W12(\mfd)]{v}
  \end{equation}
  for $u,v\in\Wz12(\mfd)$. Let us also define
  $\schf_n$ analogously, with $\Vee_n$ in place of $\Vee$
  in~\eqref{eqn:schf-dfn}.

  By construction, $\schf$ is symmetric; it defines a corresponding
  self-adjoint operator $\schop$ whose domain $\domain{\schop}$ is a form core
  of $\schf$, and which satisfies
  \begin{equation}
    \label{eqn:schop-dfn}
    \schf(u,v)=\idot[\Lp2(\mfd)]{\schop u}{v}
  \end{equation}
  for all $u\in\domain{\schop}$, $v\in\domain{\schf}$. Likewise, each
  $\schf_n$ defines a corresponding operator, which we denote
  $\schop_n$.
  
  We observe the following:
  \begin{itemize}
  \item
    \emph{For $\lambda\in\C\setminus\R$, the resolvent
    $\resolvent{\schop_n}=(\schop_n-\lambda)\inv$ is a continuous
    linear map $\Lp2(\mfd)\to\Wz12(\mfd)$.}
  \item
    \emph{Multiplication by $\Vee_n-\Vee$ defines a continuous linear
    map $\W12(\mfd)\to\dual{\W12(\mfd)}$ of norm no greater that
    $C_\mfd\norm[\Lp{\frac{\mfdim}{2}}(\mfd)]{\Vee_n-\Vee}$.} This follows
    from the computation~\eqref{eqn:schf-L1}.
  \end{itemize}
  Next, we prove that there exists $\lambda\in\C\setminus\R$ such that
  the resolvent $\resolvent{\schop}$ of $\schop$ extends to a continuous
  linear map $\dual{\W12(\mfd)}\to\Wz12(\mfd)$, which we still call
  $\resolvent{\schop}$. The motivation for this is that, by the second
  resolvent identity,
  \begin{equation}
    \label{eqn:resolvent-comp}
    \resolvent{\schop}-\resolvent{\schop_n}
    =
    \resolvent{\schop}(\Vee_n-\Vee)\resolvent{\schop_n}.
  \end{equation}
  To show the norm-resolvent convergence $\schop_n\to\schop$
  is to show that the norm of the left-hand side of
  ~\eqref{eqn:resolvent-comp} vanishes in the limit $\eps\tendsto0$,
  which we do by bounding the norm of each of the three maps composed on
  the right-hand side.

  Towards showing the existence of the extension
  $\resolvent{\schop}:\dual{\W12(\mfd)}\to\Wz12(\mfd)$, see first that
  $\schop-\lambda$ defines a map $\domain{\schop}\to\dual{\W12(\mfd)}$,
  which we still call $\schop-\lambda$, in the following way: for every
  $u\in\domain{\schop}$, define
  $(\schop-\lambda)u\in\dual{\W12(\mfd)}$ by setting
  \begin{equation}
    \label{eqn:schop-lambda-extension}
    \left((\schop-\lambda)u\right)(f)=\idot[\Lp2(\mfd)]{(\schop-\lambda)u}{f}
  \end{equation}
  for every $f\in\W12(\mfd)$. We show in the following that
  $\schop-\lambda$ has a continuous inverse
  $(\schop-\lambda)\inv:\dual{\W12(\mfd)}\to\domain{\schop}$ which,
  composed with the inclusion $\domain{\schop}\subseteq\Wz12(\mfd)$,
  gives us the desired map
  $\resolvent{\schop}:\dual{\W12(\mfd)}\to\Wz12(\mfd)$.
  Let us first briefly recall the following result: for a measure space
  $(X,\mu)$, $p\ge1$, and $f\in\Lp p(X)$, for $\delta>0$, there exists
  $E_\delta\subseteq X$ such that $X$ is essentially bounded on
  $X\setminus E_\delta$, and
  \begin{equation}
    \label{eqn:chebyshev}
    \norm[\Lp p(X)]f
    \le\delta.
  \end{equation}
  Using this, we find that for any $\lambda\in\R$,
  $\delta_1,\delta_2>0$ and for any
  $u\in\domain{\schop}\subseteq\Wz12(\mfd)$, there exist
  $E_{\delta_1}\subseteq\mfd$ and $E_{\delta_2}\subseteq\bd\mfd$,
  such that
  \begin{equation}
    \label{eqn:h-expanding}
    \begin{aligned}
      \norm[\dual{\W12(\mfd)}]{(\schop-\lambda)u}
      \norm[\W12(\mfd)]{u}
      &\ge
      \Re{\idot[\Lp2(\mfd)]{(\schop-\lambda)u}{u}}\\
      &=
      \norm[\Lp2(\mfd)]{\grad u}^2
      +
      \int_\mfd u^2\Vee\dv
      +
      \int_{\bd\mfd}u^2\Bdparam\dA
      -
      \Re{\lambda}\norm[\Lp2(\mfd)]{u}^2\\
      &\ge
      \norm[\Lp2(\mfd)]{\grad u}^2\\
      &\quad-
      \left(
      \norm[\Lp\infty(E_{\delta_1})]{\Vee}\norm[\Lp2(\mfd)]{u}^2
      +{\delta_1}\norm[\Lp{\frac{\mfdim}{2}}(\mfd)]{\Vee}\norm[\W12(\mfd)]{u}^2
      \right)
      \\&\quad-
      \left(
      \norm[\Lp\infty(E_{\delta_2})]{\Bdparam}\norm[\Lp2]{u}^2
      +{\delta_2}\norm[\Lp\infty(\bd\mfd)]{\Bdparam}\norm[\W12(\mfd)]{u}^2
      \right)\\
      &\quad-
      \Re{\lambda}\norm[\Lp2(\mfd)]{u}^2.
    \end{aligned}
  \end{equation}
  We now choose $\delta_1$, $\delta_2$ such that
  \begin{equation}
    0<\delta_1<\frac{1}{3\norm[\Lp{\frac\mfdim2}(\mfd)]{\Vee}}
    \quad\text{and}\quad
    0<\delta_2<\frac{1}{3\norm[\Lp\infty(\bd\mfd)]{\Bdparam}},
  \end{equation}
  and choose $\lambda\in\C$ such that
  \begin{equation}
    \Re{\lambda}<-\big(\norm[\Lp\infty(E_{\delta_1})]{\Vee}+\norm[\Lp\infty(E_{\delta_2})]{\Bdparam}+1\big)
    \quad\text{and}\quad
    \Im{\lambda}\ne0.
  \end{equation}
  From~\eqref{eqn:h-expanding}, we thus deduce
  \begin{equation}
    \norm[\dual{\W12(\mfd)}]{(\schop-\lambda)u}\ge\frac{1}{3}\norm[\W12(\mfd)]{u},
  \end{equation}
  from which it follows that
  $\schop-\lambda:\domain{\schop}\to\dual{\W12(\mfd)}$ is injective
  with closed range. Since $\Lp2(\mfd)$ is dense in $\W12(\mfd)$,
  $\schop-\lambda$ is moreover surjective onto
  $\dual{\W12(\mfd)}$. We conclude that
  $(\schop-\lambda)\inv:\dual{\W12(\mfd)}\to\domain{\schop}$ is
  indeed well-defined and continuous.
  
  Finally, we have for $\lambda\in\C\setminus\R$ that
  \begin{equation}
    \begin{aligned}
      \norm{\resolvent{\schop}-\resolvent{\schop_n}}
      &=
      \norm{\resolvent{\schop}(\Vee_n-\Vee)\resolvent{\schop_n}}\\
      &\le
      \norm{\resolvent{\schop}}
      \norm{\Vee_n-\Vee}
      \norm{\resolvent{\schop_n}}\\
      &\lesssim_{\mfd,\Vee,\lambda}
      \norm[\Lp{\frac{\mfdim}{2}}(\mfd)]{\Vee_n-\Vee}
    \end{aligned}
  \end{equation}
  which vanishes in the limit $\eps\tendsto0$, as desired.

  The case $\mfdim=2$ proceeds identically, with the norms in
  $\Lp{\frac{\mfdim}{2}}(\mfd)$ and
  $\W1{\frac{\mfdim}{\mfdim-1}}(\mfd)$ everywhere replaced by
  $\LlogLone(\mfd)$ and $\W1{2,-\half}(\mfd)$, respectively. Where we
  have used Proposition~\ref{prop:sobolev-multiplication-i} here we
  instead invoke Proposition~\ref{prop:sobolev-multiplication-ii}, and
  where we use the embedding
  $\W1{\frac{\mfdim}{\mfdim-1}}(\mfd)\to\Lp{\frac{2}{\mfdim-2}}(\mfd)$
  we instead apply the embedding~\eqref{eqn:expL-embed}.%
  \end{proof}
This finishes the proof of Proposition~\ref{prop:main-intermediate},
and hence of Proposition~\ref{prop:main}, from which follows also the
claimed results Theorem~\ref{thm:main-nonneg} and
Theorem~\ref{thm:main}.

\section{Flexibility of Schr\"odinger potentials}
\label{sec:flexibility}
In this section we prove Theorem~\ref{thm:flexibility}, which is an
application of our results in the previous sections to the problem of
potential optimisation for the Schr\"odinger eigenvalue problem.

An appropriate setting, described by Buttazzo, Gerolin, Ruffini, and
Velichkov~\cite{bgrv}, in which to formulate optimisation problems for
Schr\"odinger potentials is the class $\Mcap(\mfd)$ of
\emph{capacitary measures} on $\mfd$, which are those Borel measures
which vanish on all sets of capacity zero. For $\mu\in\Mcap(\mfd)$,
one makes sense of the weak formulation of the Schr\"odinger problem
with $\mu$ as the potential by replacing the term $\int_{\mfd}\yuu
v\Vee\dv$ with $\int_{\mfd}\yuu v\,\dee\mu$ in
\eqref{eqn:weak-schroedinger}.

A motivating example for this is the problem of minimising
$\lam{\Sch}(\dom,\Vee)$ for $\Vee:\dom\to\Rnonneg$ in the class
$\mathcal{V}^p_1$ of functions with unit norm in $\Lp p(\dom)$ for a
bounded domain $\dom\subseteq\R^{\mfdim}$ with Dirichlet boundary
conditions. This problem admits no solution in $\mathcal{V}^p_1$,
whereas the same infimum is attained if we enlarge the class of
candidates and consider the same problem for $\Vee$ a non-negative
capacitary potential with unit total mass; see~\cite[Section~3]{bgrv}.

We note that the class of capacitary measures is rather large. A
capacitary measure need not, for example, be absolutely continuous
with respect to the Lebesgue measure.  The measures $\mue$ as defined
in~\eqref{eqn:mues-dfn}, concentrated in a submanifold of co-dimension
one, are capacitary measures on $\mfd$. Thus the following
proposition, whose conclusion is identical to
Theorem~\ref{thm:flexibility} except with measures $\mue$ for the
potentials in place of $\Vee_n$, is already a flexibility result among
capacitary measures for those optimal Schr\"odinger potentials which
are admissible.
\begin{prop}
  \label{prop:flexibility-intermediate}
  Let $\mfd$ be a Riemannian manifold of dimension $\mfdim\ge 2$,
  and let $\Vee$ be an admissible potential on $\mfd$.
  Let $1<p_0<\frac{d}{d-1}$, and let
  \begin{equation}
    \label{eqn:bdconst-choice}
    \bdconst=\eps^{-\frac{\mfdim(p_0-1)}{d-p_0}}.
  \end{equation}
  Let $\zbd\subseteq\bd\mfd$, and let
  $\Bdparam\in\Lp\infty(\bdmfd)$. Let $\dome,\bdparame$ be as
  constructed in Section~\ref{ssec:homogenisation-construction}, and
  let $\mue=\bdparame\area$ as in~\eqref{eqn:mues-dfn}. Then we have
  the following:
  \begin{itemize}
  \item The Schr\"odinger spectrum with potential
    $\mue$ approaches the Schr\"odinger spectrum
    with potential $\Vee$, that is, for each $k\in\Npos$,
    \begin{equation}
      \label{eqn:lam-sch-approx}
      \lam{\Sch}(\mfd,\mue,\Bdparam)\tendsto\lam{\Sch}(\mfd,\Vee,\Bdparam)
    \end{equation}
    as $\eps\tendsto 0$,
  \item For all $p>p_0$, $\mue\tendsto\Vee$ in $\dual{\W1p(\mfd)}$, and
  \item For all $1\le p\le p_0$, There exists a constant $C_{\mfd,\Vee}>0$
    such that
    \begin{equation}
      \label{eqn:lam-sch-unstable}
      \norm[\dual{\W1p(\mfd)}]{\Vee-\mue}>C_{\mfd,\Vee}
    \end{equation}
    for all $\eps>0$.
  \end{itemize}
\end{prop}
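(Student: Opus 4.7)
The plan for Proposition~\ref{prop:flexibility-intermediate} is to derive parts~(i) and~(ii) directly from the convergence results of Section~\ref{sec:robin-homogenisation}, and to construct an explicit witness family of test functions for the lower bound in part~(iii). For the choice $\bdconst=\eps^{-\mfdim(p_0-1)/(\mfdim-p_0)}$, a direct substitution shows $\bdconst\inv\eps=\eps^{p_0(\mfdim-1)/(\mfdim-p_0)}\tendsto 0$, and $\bdconst\eps^{\mfdim(p-1)/(\mfdim-p)}\tendsto 0$ precisely when $p>p_0$, since $t\mapsto(t-1)/(\mfdim-t)$ is strictly increasing on $(1,\mfdim)$. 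Thus the hypothesis~\eqref{eqn:bdconst-range} is satisfied for every $p\in(p_0,\mfdim/(\mfdim-1))$; part~(i) follows from Proposition~\ref{prop:main}, and part~(ii) from Proposition~\ref{prop:measures-converge} together with the embeddings~\eqref{eqn:orlicz-sobolev-embedding}.

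For part~(iii), the plan is to build normalised test functions that pick up the mass of $\mue$ near the holes while paying very little against $\Vee\volume$. By~\eqref{eqn:radius-asymp}, the chosen $\bdconst$ gives $\radius\asymp\eps^{\mfdim/(\mfdim-p_0)}$. Assuming $\Vee$ is not identically zero, fix an open subset $U\subseteq\mfd$ on which $\Vee$ has constant sign and is bounded away from zero; let $J\subseteq\psites$ be the subset of indices whose cells lie in $U$, so that $\card{J}\asymp\eps^{-\mfdim}$ and $\bdparame$ is constantly $\pm\bdconst$ on each $\bd\hole$ with $\site\in J$. For each such $\site$ define the Lipschitz tent $f_\site(x)=\max(0,1-\dist(x,\bd\hole)/\radius)$: it equals one on $\bd\hole$, is supported in $\ball{\site}{2\radius}$, and satisfies $\abs{\grad f_\site}\le 1/\radius$. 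Set $f_\eps=\sum_{\site\in J}f_\site$; the supports are pairwise disjoint for small $\eps$.

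A bookkeeping of exponents---whose main content is that the critical choice in~\eqref{eqn:bdconst-choice} balances the two relevant scales---yields $\norm[\W1{p_0}(\mfd)]{f_\eps}^{p_0}\asymp\card{J}\radius^{\mfdim-p_0}\asymp 1$ and $\bigabs{\int f_\eps\dnue}\asymp\card{J}\bdconst\radius^{\mfdim-1}\asymp 1$. The pairing against $\Vee\volume$ is controlled by $\bigabs{\int f_\eps\Vee\dv}\le\int_{\supp f_\eps}\abs\Vee\dv$, which tends to zero as $\Vol(\supp f_\eps)\asymp\eps^{\mfdim p_0/(\mfdim-p_0)}\tendsto 0$. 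Combining, $\bigabs{\int f_\eps\,\dee(\mue-\Vee\volume)}\ge c_0>0$ uniformly, and normalising gives $\norm[\dual{\W1{p_0}(\mfd)}]{\Vee-\mue}\ge c_0'>0$. The extension to $1\le p\le p_0$ is immediate from the continuous inclusion $\W1{p_0}(\mfd)\subset\W1p(\mfd)$ on the compact manifold, which transfers the lower bound up to a constant depending on $p$.

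The main obstacle I anticipate is controlling $\int f_\eps\Vee\dv$ when $\Vee$ is only admissible rather than bounded; this is handled by H\"older's inequality with $\Vee\in\Lp{\mfdim/2}(\mfd)$ in dimensions $\mfdim\ge 3$---giving a quantitative rate $\eps^{p_0(\mfdim-2)/(\mfdim-p_0)}$---and by absolute continuity of the $\LlogLone$-integral on shrinking supports in dimension $\mfdim=2$. The fact that the support volume scales as $\eps^{\mfdim p_0/(\mfdim-p_0)}$ is itself a consequence of the balance enforced by~\eqref{eqn:bdconst-choice}, so the entire argument is internally consistent with that single critical choice.
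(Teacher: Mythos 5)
Your exponent bookkeeping throughout is correct: indeed $\radius\asymp\eps^{\mfdim/(\mfdim-p_0)}$ under~\eqref{eqn:bdconst-choice}, the $\W1{p_0}$-norm of the tent family balances to order one, the pairing against $\mue$ balances to order one, and $\Vol(\supp f_\eps)\asymp\eps^{\mfdim p_0/(\mfdim-p_0)}$. The part~(iii) construction is essentially the same as the paper's, with a cosmetic variation: you restrict the sum of tents to indices whose cells lie in a fixed region of constant sign for $\Vee$, whereas the paper instead multiplies each tent by $\sign(\Vee(\site))$ and sums over all of $\psites$, so that the pairing with $\mue$ is manifestly a sum of non-negative terms. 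Both work; the paper's version avoids having to point to a set on which $\Vee$ has constant sign.

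There is, however, a genuine error in part~(i). You cite Proposition~\ref{prop:main}, but that result concludes $\lam{\Rob}(\dome,\bdparame,\Bdparam)\tendsto\lam{\Sch}(\mfd,\Vee,\Bdparam)$, i.e.\ convergence of the \emph{Robin} eigenvalues of the \emph{perforated domain} $\dome$. What you are asked to prove is $\lam{\Sch}(\mfd,\mue,\Bdparam)\tendsto\lam{\Sch}(\mfd,\Vee,\Bdparam)$: the Schr\"odinger eigenvalues on the \emph{unperforated} manifold $\mfd$ with the measure $\mue$ as potential. These are different spectral problems---the gradient integral is taken over $\dome$ in one and over all of $\mfd$ in the other---and Proposition~\ref{prop:main} says nothing about the latter. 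The correct step, and the one the paper takes, is to apply Proposition~\ref{prop:main-intermediate} with the choice $\dome[\eps]=\mfd$ (so no holes are removed from the domain, only the measure $\nue=\mue$ is concentrated near where the holes would be): then {\hypone} and {\hyptwo} become vacuous, {\hypthree} is supplied by Proposition~\ref{prop:measures-converge}, and one identifies $\lam{\Sch}(\mfd,\mue,\Bdparam)=\lam{\Rch}(\mfd,\mue,\Bdparam)$. Proposition~\ref{prop:main-intermediate} was deliberately cast in the generality of a Radon measure $\nue$ so that exactly this specialisation would be available.

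Two smaller remarks. You implicitly assume $\Vee$ is not identically zero; this is in fact needed for part~(iii), since $\Vee\equiv0$ forces $\psites=\emptyset$ and hence $\mue=0$. Also, passing from $p_0$ to $p\le p_0$ via the inclusion $\W1{p_0}(\mfd)\subset\W1p(\mfd)$ introduces a constant depending on $p$; this is harmless here, but the constant in~\eqref{eqn:lam-sch-unstable} is then $C_{\mfd,\Vee,p}$ rather than uniform in $p$, which is worth flagging.
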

\begin{proof}[Proof]
  Noting that
  $\lam{\Sch}(\mfd,\mue,\Bdparam)=\lam{\Rch}(\mfd,\mue,\Bdparam)$, the
  convergence~\eqref{eqn:lam-sch-approx} follows from applying
  Proposition~\ref{prop:main-intermediate}, choosing
  $\dome[\eps]=\mfd$ and $\nue=\mue$. Hypotheses {\hypone} and
  {\hyptwo} are satisfied trivially, and {\hypthree} follows from
  Proposition~\ref{prop:measures-converge}, which also gives us the
  convergence $\mue\tendsto\Vee\volume$ in $\dual{\W1p(\mfd)}$ for
  $p>p_0$.
  
  It therefore remains only to prove~\eqref{eqn:lam-sch-unstable}. We
  do this by explicitly constructing a family of trial functions in
  $\W1p(\mfd)$, and showing that they indeed separate $\mue$ from
  $\Vee$.
  
  Since we are concerned with the limit as $\eps$ tends to $0$, let us
  confine our attention to those $\eps>0$ such that $3\eps$ is no
  greater than the injectivity radius of $\mfd$, so that at each
  $\site\in\sites$ there are geodesic polar co-ordinates $\chart:
  [0,3\eps)\times\sphere[\mfdim-1]\to\mfd$.
    
  For each $\site\in\sites$, define the radial function
  $\overline{\wse}:[0,3\eps)\times\sphere[\mfdim-1]\to\R$
  \begin{equation}
    \overline{\wse}\left(r,\theta\right)=
    \begin{cases}
      \left(1-\frac{1}{\radius}\abs{r-\radius}\right)
      \sign(\Vee(\site))
      &\text{for $\abs{r-\radius}\le\radius$, and}\\
      0&\text{otherwise.}
    \end{cases}
  \end{equation}
  By construction, $\cell\subseteq\ball{\site}{3\eps}$, so the
  composition
  $\wse=\overline{\we}\circ\left(\chart\inv\at{\cell}\right)$ defines
  a function $\wse:\cell\to\R$.

  To bound $(\Vee\dv-\mue)(\we)$ away from zero, we show that
  $\mue(\we)$ is bounded away from zero as $\eps\tendsto0$,
  whereas $(\Vee\dv)(\we)=\int_{\mfd}\we\Vee\dv$ vanishes in the
  limit.
  
  The function $\wse$ is supported on a neighbourhood of $\bd\hole$
  contained within $\ball{\site}{2\radius}$, which for all
  sufficiently small $\eps>0$ is in turn contained entirely within
  $\cell$. For all such $\eps$, we finally define $\we:\mfd\to\R$
  piecewise by setting $\we\at{\cell}=\wse$ for each
  $\site\in\sites$. The function $\we$ is well-defined since $\wse$
  agree on the boundary between adjacent cells, where they vanish;
  and we have $\we\in\Cn{}(\mfd)$ by construction.
  
  We have the following estimate for the volume of the set on
  which $\we$ is supported:
  \begin{equation}
    \label{eqn:int-vol-supp-we}
    \Vol(\supp \we)
    \le
    \underbrace{\card{\sites}}_{\asymp_{\mfd,\Vee}\eps^{-\mfdim}}
    \underbrace{\Vol(\ball{\site}{2\radius})}_{\lesssim_{\mfd,\Vee}\bdconst^{-\frac{\mfdim}{\mfdim-1}}\eps^{\frac{\mfdim^2}{\mfdim-1}}}
    \lesssim_{\mfd,\Vee}
    \bdconst^{-\frac{\mfdim}{\mfdim-1}}\eps^{\frac{\mfdim}{\mfdim-1}}
  \end{equation}
  recalling~\eqref{eqn:bdconst-hyp}, we see that this vanishes in
  the limit $\eps\tendsto0$. Since $\Vee\in\Lp1$, we have that
  $\Vee\volume$ is absolutely continuous with respect to $\volume$;
  consequently
  \begin{equation}
    \label{eqn:int-vee}
    \bigabs{\int_\mfd\we\Vee\dv}
    \le
    \int_{\supp\we}\abs{\Vee}\dv\tendsto0.
  \end{equation}
  
  On the other hand, to estimate $\mue(\we)$, we use the fact that the
  function $\wse$ takes value $\pm1$ on $\bd\holes\subseteq\mfd$,
  taking on each connected component the same sign as the boundary
  parameter there. Thus
  \begin{equation}
    \label{eqn:int-mease}
    \mue(\we)=
    \int_{\bd\holes}\abs{\bdparame}\dA
    \asymp_{\mfd,\Vee}1.
  \end{equation}
  The asymptotic estimate comes from
  $\lim_{\eps\tendsto0}\mue(\we)=\norm[\Lp1(\mfd)]{\Vee}$, by
  Corollary~\ref{cor:measures-weak-converge}.

  To conclude from this that the measures $\Vee$ and $\mue$ are
  separated in $\dual{\W1p(\mfd)}$, it remains to verify that the
  trial functions $\we$ are bounded in $\W1p(\mfd)$ for
  small~$\eps>0$. Using some of the same estimates
  from~\eqref{eqn:int-vol-supp-we}, the $\Lp p(\mfd)$-norm of $\we$ is
  bounded by
  \begin{equation}
    \label{eqn:Lp1-we}
    \norm[\Lp p(\mfd)]{\we}^p
    =
    \sum_{\site\in\sites}\int_{\cell}\abs{\wse}\dv
    \le
    \card{\sites}\Vol(\ball{\site}{2\radius})
    \underbrace{\norm[\Lp\infty(\cell)]{\wse}}_{=1}
    \lesssim_{\mfd,\Vee}
    \eps^{\frac{\mfdim}{\mfdim-1}}.
  \end{equation}
  To control the $\Lp p(\mfd)$-norm of $\grad\we$, first compute in
  co-ordinates
  \begin{equation}
    \label{eqn:we-bounded-cell}
    \begin{aligned}
      \int_{\cell}\abs{\grad \wse}^p\dv
      &\asymp_{\mfd,p}
      \int_{0}^{2\radius}
      \int_{\sphere[\mfdim-1]}
      \abs{\grad[]\overline{\wse}}^p
      \,\dee\theta
      \,\dee r
      \\
      &\lesssim_{\mfd,\Vee,p}
      \Vol[\mfdim-1](\ball[]{0}{2\radius})
      \radius^{-(p-1)}\\
      &\lesssim_{\mfd,\Vee,p}
      \bdconst^{-\frac{\mfdim-p}{\mfdim-1}}
      \eps^{\frac{\mfdim(\mfdim-p)}{\mfdim-1}}.
    \end{aligned}
  \end{equation}
  Aggregating over all cells, we then have that
  \begin{equation}
    \label{eqn:Lp1-grad-we}
    \norm[\Lp p(\mfd)]{\grad\we}^p
    =
    \sum_{\site\in\sites}\int_{\cell}\abs{\grad\wse}^p\dv
    \lesssim_{\mfd,\Vee,p}
    \bdconst^{-\frac{\mfdim-p}{\mfdim-1}}\eps^{-\frac{\mfdim(p-1)}{\mfdim-1}}
    \lesssim_{\mfd,\Vee,p}
    \eps^{\frac{-\mfdim(p-p_0)}{\mfdim-p_0}}
  \end{equation}
  by substituting in~\eqref{eqn:bdconst-choice}. This stays bounded as
  soon as $p\le p_0$, and tends to zero as for each $p<p_0$ as
  $\eps\tendsto0$; from this follows boundedness uniform in $p$, that
  is,
  \begin{equation}
    \norm[\Lp p(\mfd)]{\grad\we}^p\le C_{\mfd,\Vee}.
  \end{equation}
  This finishes the proof.
\end{proof}
To finally recover Theorem~\ref{thm:flexibility} from
Proposition~\ref{prop:flexibility-intermediate}, we show that the
potential $\mue$ can be in turn approached in $\dual{\W1p(\mfd)}$ by
potentials which have smooth functions as their densities. We
formulate this proposition in slightly greater generality than is
immediately necessary, showing in general that it is possible to
approximate in this way a measure supported on a smooth submanifold of
co-dimension one. The proof proceeds by explicitly constructing smooth
potentials supported on successively smaller tubular neighbourhoods of
the submanifold, following an argument that is essentially identical
to that of \cite[Theorem~5.2]{gkl}.
\begin{prop}[Approximating a potential concentrated in a submanifold]
  \label{prop:submfd-approx}
  Let $1<p<\infty$. Let $\mfd$ be a Riemannian manifold of dimension
  $\mfdim$, and let $\submfd\subseteq\interior{\mfd}$ be a smooth
  embedded submanifold of co-dimension one. Let
  $\bdparam\in\Cn\infty(\submfd)$, and write $\dA$ for the
  $(\mfdim-1)$-dimensional Hausdorff measure on $\submfd$. Then there
  exist a family of functions $\sequence{\Veee}{\eps>0}$ in
  $\Cn\infty(\mfd)$ such that $\Veee\dv\tendsto\bdparam\dA$ in
  $\dual{\W1p(\mfd)}$ as $\eps\tendsto0$.
\end{prop}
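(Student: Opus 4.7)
The plan is to construct $\Veee$ as a mollification of the measure $\bdparam\dA$ in the normal direction to $\submfd$. Since $\submfd$ is smooth, compact, embedded in $\interior{\mfd}$, and of co-dimension one, I would first fix $\eps_0>0$ smaller than the reach of $\submfd$ so that the normal exponential map $\Nchart:\submfd\times(-\eps_0,\eps_0)\to\mfd$ is a diffeomorphism onto a tubular neighbourhood $\Nbnhd$. In these coordinates the volume form factors as $\dv=J(x,t)\,\dee{t}\,\dA$ for a smooth positive Jacobian $J$ with $J(x,0)=1$. Fixing a smooth mollifier $\rho\in\Cn\infty_c((-1,1))$ with $\int_\R\rho=1$ and setting $\rho_\eps(t)=\eps^{-1}\rho(t/\eps)$, for each $\eps<\eps_0$ I would define $\Veee\in\Cn\infty(\mfd)$ by $\Veee(\Nchart(x,t))=\bdparam(x)\rho_\eps(t)/J(x,t)$ on $\Nbnhd$ and zero elsewhere. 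The Jacobian in the denominator is chosen so that, after the change of variables induced by $\Nchart$, the factor of $J$ cancels and the resulting measure agrees with $\bdparam\dA$ whenever tested against a function constant in the normal direction.

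Next, I would prove the convergence directly from this construction. For $f\in\Cn\infty(\mfd)$ and all sufficiently small $\eps$, Fubini together with $\int\rho_\eps=1$ gives
\begin{equation}
  \int_\mfd f\Veee\dv-\int_\submfd f\bdparam\dA
  =
  \int_\submfd\bdparam(x)\int_{-\eps}^{\eps}\bigl(f(\Nchart(x,t))-f(\Nchart(x,0))\bigr)\rho_\eps(t)\,\dee{t}\,\dA.
\end{equation}
The key estimate will come from the fundamental theorem of calculus along each normal geodesic: H\"older's inequality in $t$ with exponents $p$ and $p'=p/(p-1)$ yields
\begin{equation}
  \bigabs{f(\Nchart(x,t))-f(\Nchart(x,0))}
  \le
  (2\abs{t})^{1-1/p}
  \biggl(\int_{-\eps}^{\eps}\bigabs{\pd{s}(f\circ\Nchart)(x,s)}^p\,\dee{s}\biggr)^{1/p},
\end{equation}
and combining this with $\abs{t}\le\eps$, $\int\rho_\eps=1$, and a further application of H\"older's inequality in the $\submfd$ variable with the same exponents should produce
\begin{equation}
  \bigabs{\int_\mfd f\Veee\dv-\int_\submfd f\bdparam\dA}
  \lesssim_{\submfd,\bdparam,p}
  \eps^{1-1/p}\norm[\W1p(\mfd)]{f},
\end{equation}
which vanishes as $\eps\tendsto 0$ since $p>1$.

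The main obstacle will be extending the estimate to an arbitrary $f\in\W1p(\mfd)$, since the pointwise step in the fundamental theorem of calculus is not directly available outside of the smooth setting. I expect this to be handled by restricting attention to the absolutely continuous representative of $f$ along almost every normal fibre, which is available by a standard Fubini argument; alternatively, one may approximate $f$ in $\W1p(\mfd)$ by smooth functions and observe that the displayed bound depends only on $\norm[\W1p(\mfd)]{f}$, so the conclusion persists in the limit by density. This strategy closely parallels the construction carried out for Steklov weights in \cite[Theorem~5.2]{gkl}, and should adapt to the present setting with only cosmetic modifications.
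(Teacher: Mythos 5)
Your proposal is correct and follows essentially the same route as the paper's proof: mollify $\bdparam\dA$ along normal geodesics via the tubular neighbourhood, use the fundamental theorem of calculus to compare $f\circ\Nchart(x,t)$ with $f(x)$, and apply H\"older to recover the factor $\eps^{(p-1)/p}\norm[\W1p(\mfd)]{f}$. The only substantive difference is cosmetic: you build the Jacobian $J(x,t)^{-1}$ into the definition of $\Veee$, so the normal-fibre integral of $\Veee\dv$ exactly reproduces $\bdparam\dA$ against $t$-constant test functions, whereas the paper omits this correction and instead carries a harmless $(1+\bigO[\mfd](\eps))$ factor through the computation; your way of applying H\"older (inside the $t$-integral first, then over $\submfd$) also differs slightly from the paper's (which bounds $\norm[\Lp\infty]{\rho_\eps}$ and then does a single H\"older step on $\norm[\Lp1]{\grad v}$), but both yield the same exponent. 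Your closing density remark is the right way to extend the bound from smooth $f$ to general $f\in\W1p(\mfd)$, using that both sides define continuous linear functionals.
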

\begin{proof}
  Let us write $\pi:\Nb\submfd\to\submfd$ for
  the normal bundle to $\submfd$ in $\mfd$. By the tubular
  neighbourhood theorem and the compactness of $\cl{\mfd}$, there
  exists $\eps_0>0$ such that $\Nchart:\Nb\submfd\to\mfd$ defined~by
  \begin{equation}
    \Nchart(x,w)=\exp^\mfd_xw
  \end{equation}
  for $x\in\submfd$, $w\in\Np x\submfd=\pi\inv(x)$ is a
  diffeomorphism from an $\eps_0$-neighbourhood of the zero section
  of $\Nb\submfd$ onto a neighbourhood of $\submfd$ in $\mfd$.

  For $0<\eps<\eps_0$, let us write
  $\Nbnhd=\{(x,w)\in\Nb\submfd{\,:}\abs{w}<\eps\}$ and
  $\Npnhd=\Npnhd\cap\Np x\submfd$. Then for any bounded
  $f:\mfd\to\R$ supported within $\Nchart(\Nbnhd)$, we have the
  estimate
  \begin{equation}
    \label{eqn:Nb-integral-error}
    \int_{\mfd}f\dv
    =
    \left(1+\bigO[\mfd](\eps)\right)
    \int_{\submfd}
    \int_{\Npnhd}
    (f\circ\Nchart)(x,w)
    \,\dee w
    \,\dee x
  \end{equation}
  in which the integration against the variables $x$ and $w$ are
  with respect to the $(\mfdim-1)$-dimensional Hausdorff measure
  $\area$ on $\submfd$, and with respect to the measure on $\Np
  x\submfd$ induced by the Riemannian metric on $\Tp x\mfd$, respectively.

  Now fix a function $\rho\in\Cn\infty(\R)$ which is non-negative,
  supported in $[-1,1]$, and is such that $\int_{\R}\rho(t)\,\dee
  t=1$. Let us write $\rho_\eps(t)$ for $\eps\inv\rho(\eps t)$. For
  $0<\eps<\eps_0$, we then define $\Veee\in\Cn\infty(\mfd)$ by
  \begin{equation}
    \Veee(y)=\begin{cases}
    \rho_\eps(\abs{w})\bdparam(x)&\text{for $y=\Nchart(x,w)$ with $(x,w)\in\Nbnhd$,}\\
    0&\text{otherwise.}
    \end{cases}
  \end{equation}
  For $0<\eps<\eps_0$, we then have by~\eqref{eqn:Nb-integral-error}
  that
  \begin{equation}
    \label{eqn:Nb-integral-calc}
    \begin{aligned}
      &\int_\submfd v\bdparam\dA
      -
      \int_\mfd v\Veee\dv
      \\
      &\quad=
      \int_\submfd v(x)\bdparam(x)\,\dee x
      -
      (1+\bigO[\mfd](\eps))
      \int_\submfd\int_{\Npnhd}(v\circ\Nchart)(x,w)\rho_\eps(\abs{w})\bdparam(x)\,\dee w\,\dee x\\
      &\quad=
      \left(1+\bigO[\mfd](\eps)\right)
      \int_\submfd
      \bdparam(x)
      \left(
      v(x)
      -
      \int_{\Npnhd}
      (v\circ\Nchart)(x,w)
      \rho_\eps(\abs{w})
      \,\dee w
      \right)
      \,\dee x\\
      &\quad=
      \left(1+\bigO[\mfd](\eps)\right)
      \int_\submfd
      \bdparam(x)
      \left(
      v(x)
      -
      v(x)
      \int_{\Npnhd}
      \rho_\eps(\abs{w})
      \,\dee w
      +
      \int_{\Npnhd}
      v(x)
      \rho_\eps(\abs{w})
      \,\dee w
      -
      \int_{\Npnhd}
      (v\circ\Nchart)(x,w)
      \rho_\eps(\abs{w})
      \,\dee w
      \right)
      \,\dee x\\
      &\quad=
      \left(1+\bigO[\mfd](\eps)\right)
      \int_\submfd
      \bdparam(x)
      \left(
      v(x)
      \Big(
      1-\int_{\Npnhd}\rho_\eps(\abs{w})\,\dee w
      \Big)
      +
      \int_{\Npnhd}
      \left(v(x)-(v\circ\Nchart)(x,w)\right)
      \rho_\eps(\abs{w})
      \,\dee w
      \right)
      \,\dee x.
    \end{aligned}
  \end{equation}
  By construction of $\rho_\eps$, we have that
  $\int_{\Npnhd}\rho_\eps(\abs{w})\,\dee w=1$. For $w\in\Npnhd$,
  \begin{equation}
    \bigabs{v(x)-v\circ\Nchart(x,w)}
    =
    \bigabs{(v\circ\Nchart)(x,0)-(v\circ\Nchart)(x,w)}
    \le
    \int_0^1
    \abs{\pd{t}(v\circ\Nchart)(x,tw)}
    \,\dee t
    \le
    \int_{\Npnhd}
    \bigabs{\left(\grad v\at{\Nchart(x,w)}\right)}
    \,\dee w
  \end{equation}
  so for the surviving integral on the final line of
  \eqref{eqn:Nb-integral-calc}, we have that
  \begin{equation}
    \label{eqn:Nb-integral-calc-2}
    \begin{aligned}
      \bigabs{
        \int_{\submfd}\bdparam(x)
        \int_{\Npnhd}
        \left(v(x)-(v\circ\Nchart)(x,w)\right)
        \rho_\eps(\abs{w})
        \,\dee w
      }
      &\le
      \norm[\Lp\infty(\submfd)]{\bdparam}
      \norm[\Lp\infty(\Npnhd)]{\rho_\eps}
      \int_{\submfd}
      \int_{\Npnhd}
      \int_{\Npnhd}
      \bigabs{\left(\grad v\at{\Nchart(x,w_1)}\right)}
      \,\dee w_1
      \,\dee w_2
      \,\dee x\\
      &=
      \norm[\Lp\infty(\submfd)]{\bdparam}
      \norm[\Lp\infty(\Npnhd)]{\rho_\eps}
      \int_{\Npnhd}
      \underbrace{
        \int_{\submfd}
        \int_{\Npnhd}
        \bigabs{\left(\grad v\at{\Nchart(x,w_1)}\right)}
        \,\dee w_1
        \,\dee x
      }_{=(1+\bigO[\mfd](\eps))\norm[\Lp1(\Nchart(\Nbnhd))]{\grad v}}
      \,\dee w_2\\
      &\lesssim_{\mfd,\bdparam}
      \norm[\Lp1(\Nchart(\Nbnhd))]{\grad v}
    \end{aligned}
  \end{equation}
  as $\eps\tendsto0$, where on the final line we have used the fact
  that $\Npnhd$ has volume $2\eps$, and that
  $\norm[\Lp\infty(\Npnhd)]{\rho_\eps}=\eps\inv\norm[\Lp\infty(\R)]{\rho}$.

  Finally, by H\"older's inequality, for $p>1$ we have that
  \begin{equation}
    \label{eqn:Nb-integral-calc-3}
    \norm[\Lp1(\Nchart(\Nbnhd))]{\grad v}
    \le
    \underbrace{
      \norm[\Lp{\frac{p}{p-1}}(\mfd)]{\chf{\Nchart(\Nbnhd)}}
    }_{=\bigO[\submfd](\eps^{\frac{p-1}{p}})}
    \underbrace{
      \norm[\Lp p(\mfd)]{\grad v}
    }_{\le\norm[\W1p(\mfd)]{v}}.
  \end{equation}

  Substituting~\eqref{eqn:Nb-integral-calc-2}
  and~\eqref{eqn:Nb-integral-calc-3} into
  \eqref{eqn:Nb-integral-calc}, we have altogether that
  \begin{equation}
    \bigabs{
      \int_\submfd v\bdparam\dA
      -
      \int_\mfd v\Veee\dv
    }
    \lesssim_{\mfd,\bdparam}
    \eps^{\frac{p-1}{p}}\norm[\W1p(\mfd)]{v}.
  \end{equation}
  As $\eps\tendsto0$, since $\eps^{\frac{p-1}{p}}$ vanishes in the
  limit, we conclude that $\Veee\dv\tendsto\bdparam\dA$ in
  $\dual{\W1p(\mfd)}$, as desired.
\end{proof}
At last, we arrive at
\begin{proof}[Proof of Theorem~\ref{thm:flexibility}]    
  By a diagonalisation argument it suffices to prove, for
  $1<p<\infty$, that each of the measures $\mue$ in
  Theorem~\ref{thm:flexibility} can be approached by a family
  $\sequence{\Veee}{\eps>0}$ of smooth functions such that
  $\Vee\volume\tendsto\mue$ in $\dual{\W1p(\mfd)}$. Invoking
  Proposition~\ref{prop:submfd-approx} with $\submfd=\bd\holes$ and
  $\bdparam=\bdparame$ finishes the proof.
\end{proof}

\section*{Acknowledgements}
This work was undertaken during the author's PhD studies at King's
College London under the supervision of Jean Lagac\'e. The author also
thanks Alexandre Girouard for many helpful comments on a draft of this
article.
\bibliography{main}
\bibliographystyle{amsplain}
\end{document}